\newtheorem{thm}{Theorem}
\newtheorem{lem}{Lemma}
\newtheorem{prop}[lem]{Proposition}
\theoremstyle{remark}
\newtheorem{defn}{Definition}
\newtheorem{remark}{Remark}
\newcommand{\crv}{\cite{Cardona09}}
\definecolor{changecolor}{RGB}{192,64,0}
\newcommand{\com}[1]{}
\setlist[enumerate]{noitemsep}
\newcommand{\degsym}{\mathrm{deg}}
\newcommand{\idegsym}{\degsym_{\mathrm{i}}}
\newcommand{\odegsym}{\degsym_{\mathrm{o}}}
\newcommand{\udegsym}{\degsym_{\mathrm{u}}}
\newcommand{\indegree}{\idegsym}
\newcommand{\outdegree}{\odegsym}
\newcommand{\undegree}{\udegsym}
\newcommand{\degree}{\degsym}
\newcommand{\contraction}[1]{\mathrm{Cont}(#1)}
\newcommand{\treetag}{\mathbf{:{\mkern-5mu}t}}
\newcommand{\hybtag}{\mathbf{:{\mkern-5mu}h}}
\newcommand{\roottag}{\mathbf{:{\mkern-5mu}r}}
\newcommand{\muv}{\mu_{V}} 
\newcommand{\mue}{\mu_{E}} 
\newcommand{\dmue}{d_{\mue}} 
\newcommand{\dmuv}{d_{\muv}} 
\newcommand\submittedtext{%
\footnotesize This work has been submitted to the IEEE for possible publication.
Copyright may be transferred without notice,
after which this version may no longer be accessible.}
\newcommand\submittednotice{%
\begin{tikzpicture}[remember picture,overlay]
\node[anchor=north,yshift=40pt] at (current page.south) {\fbox{\parbox{\dimexpr0.65\textwidth-\fboxsep-\fboxrule\relax}{\submittedtext}}};
\end{tikzpicture}%
}
\begin{document}

\title{A dissimilarity measure for semidirected networks}

\author{
  Michael~Maxfield${}^1$\orcidlink{0000-0003-3087-0085},
  \and
  Jingcheng~Xu${}^1$\orcidlink{0000-0003-2387-9188},
  \and
  C\'ecile An\'e${}^{1,2}$\orcidlink{0000-0002-4702-8217}
 \thanks{${}^1$ Department of Statistics, University of Wisconsin - Madison, USA.}%
 \thanks{${}^2$ Department of Botany, University of Wisconsin - Madison, USA.}
}

\markboth{}%
{}

\IEEEpubid{}

\maketitle
\submittednotice

\begin{abstract}
  Semidirected networks have received interest in evolutionary biology
  as the appropriate generalization of unrooted trees to networks,
  in which some but not all edges are directed.
  Yet these networks lack proper theoretical study.
  We define here a general class of semidirected phylogenetic networks,
  with a stable set of leaves, tree nodes and hybrid nodes.
  We prove that for these networks, if we locally choose the direction of one edge,
  then globally the set of directed paths starting by this edge
  is stable
  across all choices to root the network.
  We define an edge-based representation of semidirected phylogenetic networks
  and use it to define a dissimilarity between networks, which can be efficiently
  computed in near-quadratic time. Our dissimilarity extends the widely-used
  Robinson-Foulds distance on both rooted trees and unrooted trees.
  After generalizing the notion of tree-child networks to semidirected networks,
  we prove that our edge-based dissimilarity is in fact a distance on the space of
  tree-child semidirected phylogenetic networks.
\end{abstract}

\begin{IEEEkeywords}
  phylogenetic, admixture graph, Robinson-Foulds, tree-child, $\mu$-representation, ancestral profile
\end{IEEEkeywords}


\section{Introduction}

\IEEEPARstart{H}{istorical} relationships between species, virus strains or languages
are represented by phylogenies, which are rooted graphs, in which
the edge direction indicates the flow of time going forward.
Semidirected phylogenetic networks are to rooted networks what undirected trees
are to rooted trees.
They appeared recently, following studies showing that the root and the
direction of some edges in the network may not be identifiable,
from various data types
\citep{2016SolisLemusAne,2019Banos}.
Consequently, several methods to infer phylogenies from data aim to estimate
semidirected networks, rather than fully directed networks, such as
\texttt{SNaQ} \citep{2016SolisLemusAne},
\texttt{NANUQ} \citep{2019Allman_NANUQ},
\texttt{admixtools2} \citep{2023Maier},
\texttt{poolfstat} \citep{2022Gautier-poolfstat},
\texttt{NetRAX} \citep{2022netrax}, and
\texttt{PhyNEST} \citep{2023Kong-phynest}.

The theoretical identifiability of semidirected networks is receiving
increased attention
\citep{2021Gross_level1,2023MartinMoultonLeggett,2023XuAne,2024Ane-anomalies}
but graph theory is still at an early stage for this type of network
\citep[but see][]{2023LinzWicke}.
In particular, adapted distance and dissimilarity measures are lacking, as
are tools to test whether two phylogenetic semidirected networks are isomorphic.
These tools are urgently needed for applications.
For example, when an inference method is evaluated using simulations,
its performance is quantified by how often the inferred
network matches the true network used to simulate data, or how similar
the inferred network is to the true network.
Tools for semidirected networks would also help summarize a posterior
sample of networks output by Bayesian inference methods.
Even for a basic summary such as the posterior probability of a given
topology, we need to decide which semidirected networks are isomorphic,
in a potentially very large posterior sample of networks.

Unless additional structure is assumed, current methods
appeal to a naive strategy that considers all possible ways to root semidirected
networks, and then use methods designed for directed networks,
e.g. to check if the candidate rooted networks are isomorphic
or to minimize a dissimilarity between the two rooted networks across
all possible root positions.

In this work, we first generalize the notion of semidirected phylogenetic networks
in which edges are either of tree type or hybrid type,
such that any edge can be directed or undirected.
We relax the constraint of a single root (of unknown position).
Multi-rooted phylogenetic networks were recently introduced,
although for directed networks, to represent the history of
closely related and admixed populations
\citep{2019SoraggiWiuf}
or distant groups of species that exchanged genes nonetheless
\citep{2022Huber-forestbased,2025Huber-properforestbased}.
Our general definition requires care to define a set of leaves consistent
across all compatible directed phylogenetic networks.

For these semidirected networks,
we define an edge-based ``$\mu$-representation'' $\mue$,
extending the node-based $\mu$-representation, denoted here as $\muv$, by
\citet{Cardona09}.
The ``ancestral profile'' of a rooted network contain the same
information as the node-based representation $\muv$,
that is: the number of directed paths from each node to each leaf
\citep{2021Bai-ancestralprofile}.
So our edge-based representation $\mue$ also extends the notion
of ancestral profile to semidirected networks, in which ancestral
relationships are unknown between some nodes because the root is unknown.
Briefly, $\mue$'s information for an edge depends on whether
the edge direction is known or implicitly constrained by the direction of other
edges in the network. For example, if $N$ is a semidirected tree and an edge
is explicitly or implicitly directed, then $\mue$ associates the edge
to the cluster of taxa below it.
If instead the edge direction depends on the unknown placement of the root(s),
then $\mue$ associates the edge to the bipartition of the taxa
obtained by deleting the edge from the semidirected tree.
If $N$ has reticulations, $\muv$ uses $\mu$-vectors to generalize the notion of
clusters, storing the number of directed paths from a given node to each taxon.
Our extension $\mue$ associates a directed edge to the $\mu$-vector of its
child node, and an undirected edge to two $\mu$-vectors: one for each
direction that the edge can take.
To handle and distinguish semidirected networks with multiple roots,
$\mue$ also associates each root to a $\mu$-vector,
well-defined even when the exact root position(s) are unknown.
We then define a dissimilarity measure $\dmue$
between semidirected networks.

Our network representation $\mue$ can be calculated in polynomial time,
namely $\mathcal{O}(n|E|)$ where $n$ is the number of leaves and
$|E|$ is the number of edges.
The associated network dissimilarity $\dmue$
can then be calculated in $\mathcal{O}\bigl(|E|(n+\log|E|)\bigr)$ time.
It provides the first dissimilarity measure adapted to semidirected networks
(without iteratively network re-rooting) that can be calculated in polynomial time.
\citet{2023LinzWicke} also recently considered semidirected networks (with a
single root of unknown position). They showed that
``cut edge transfer'' rearrangements, which transform one network into another,
define a finite distance on the space of level-1 networks with a
fixed number of hybrid nodes.
This distance is NP-hard to compute, however, because it extends the
subtree prune and regraft (SPR) distance
on unrooted trees \citep{2008Hickey-SPRdist}.

Finally, we generalize the notion of tree-child networks to semidirected networks,
and prove that $\dmue$ is a true distance on the subspace of tree-child
semidirected networks, extending the result of \crv\
to semidirected networks using an edge-based representation.
On trees, this dissimilarity equals the widely used Robinson-Foulds distance
between \emph{unrooted} trees \citep{1981RobinsonFoulds}.
We provide concrete algorithms to construct $\mue$ from a given semidirected network,
and to reconstruct the network from $\mue$.

As a proper distance, $\dmue$ can decide in polynomial time if two tree-child
semidirected networks are isomorphic.
For rooted phylogenetic networks, the isomorphism problem is polynomially
equivalent to the general graph isomorphism problem, even if restricted
to tree-sibling time-consistent rooted networks \citep{2014Cardona}.
As semidirected networks include single-rooted networks,
the graph isomorphism problem for semidirected networks is necessarily more
complex. The general graph isomorphism problem was shown to be of
subexponential complexity \citep{2016Babai} but is not known to be
solvable in polynomial time.
Therefore, there is little hope of finding a dissimilarity that can be computed
in polynomial time, and that is a distance for
general semidirected phylogenetic networks.
Hence, dissimilarities like $\dmue$ offer a balance between computation time
and the extent of network space in which it can discriminate between distinct
networks.

\section{Basic definitions for semidirected graphs}

For a graph $G$ we denote its vertex set as $V(G)$ and its edge set as $E(G)$.
The subgraph induced by a subset of vertices $V' \subseteq V(G)$
is denoted as $G[V']$, and the edge-induced subgraph is denoted as $G[E']$
for a subset $E' \subseteq E(G)$.

We use the following terminology for a directed graph $G=(V,E)$.
For a node $v$, its \emph{in-degree} is denoted as $\indegree(v, G)$
and \emph{out-degree} as $\outdegree(v, G)$.
Its \emph{total degree} $\degree(v, G)$ is $\indegree(v, G) + \outdegree(v, G)$.
We may omit $G$ when no confusion is likely.
For $u,v \in V$, we write $u > v$ if there exists a directed path $u \leadsto v$.
A node $v$ is a
\emph{leaf} if $\outdegree(v) = 0$; $v$ is an \emph{internal} node
otherwise. We denote the set of leaves as $V_L(G)$.
A node $v$ is a \emph{root} if $\indegree(v) = 0$;
a \emph{tree node} if $\indegree(v) \leq 1$;
and a \emph{hybrid node} otherwise.
We denote the set of tree nodes as $V_T$ and the set of hybrid nodes as $V_H$.
An edge $(u,v) \in E$ is a \emph{tree edge} if $v$ is a tree node;
a \emph{hybrid edge} otherwise.
We denote the set of tree edges as $E_T$ and the set of hybrid edges as $E_H$.
A \emph{descendant} of $v$ is any node
$u$ such that $v > u$. 
A \emph{tree path} is a directed path consisting only of tree edges.
A \emph{tree descendant leaf} of $v$ is any leaf $u$
such that there exists a tree path $v \leadsto u$.
An \emph{elementary path} in $G$ is a directed path such
that the first node has out-degree~$1$ in $G$
and all intermediate nodes have in-degree and
out-degree~$1$ in $G$.
In the remainder, we use ``path'' to mean
``directed path'' for brevity, unless otherwise specified.

We now extend these notions to semidirected graphs.

\begin{defn}[\textbf{semidirected graph}]\label{semi directed graph}
  A \emph{semidirected graph} $N$ is a tuple $N = (V,E)$, where $V$ is
  the set of vertices, and $E = E_U \sqcup E_D$ is the set of edges, 
  $E_U$ being the set of undirected edges and $E_D$ the set
  of directed edges.
\end{defn}

Undirected edges in $E_U$ are denoted as $uv$ for some $u,v \in V$,
instead of the standard notation of $\{u, v\}$ for brevity.
Directed edges in $E_D$ are denoted as $(u,v)$ for some $u,v \in V$,
implying the direction from $u$ to $v$, with $u$ referred to
as the \emph{parent} of the edge, and $v$ as its
\emph{child}.
A directed graph is a semidirected graph with no undirected edges:
$E_U=\emptyset$.

For $v\in V(N)$, $\indegree(v, N)$ denotes
the number of directed edges with $v$ as their child,
$\outdegree(v, N)$ the number of directed edges with $v$ as their parent,
$\undegree(v, N)$ the number of undirected edges in $N$ incident to $v$, and
$\degsym = \idegsym + \odegsym + \udegsym$.
We may omit $N$ when no confusion is likely.
Furthermore,
$\mathrm{child}(v,N) = \{w \in V : (v,w) \in E_D\}$.
$N$ is \emph{binary} if $\degree(v)=1$ or $3$ for all nodes.
$N$ is \emph{bicombining} if $\indegree{v} = 2$ for all hybrid nodes.

A semidirected graph $N' = (V,E')$ is \emph{compatible} with another
semidirected graph $N = (V,E)$ if $N'$ can be obtained from $N$
by directing some undirected edges in $N$.

The \emph{contraction} of
$N$, denoted as $\contraction{N}$,
is the directed graph obtained by contracting every
undirected edge in $N$.
It is well defined, as it can be viewed as the quotient graph of $N$
under the partition that groups nodes connected by a series of undirected edges.
For $v\in V(N)$, $\contraction{v,N}$ is defined as the node in $\contraction{N}$
which $v$ gets contracted into.

In a semidirected graph $N$, a node $v \in V(N)$ is a \emph{root}
if it only has outgoing edges.
It is a \emph{tree node} if $\indegree(v, N) \leq 1$.
Otherwise, $v$ is called a \emph{hybrid node}.
The set of tree nodes is denoted as $V_T$ or $V_T(N)$ and the set of hybrid nodes
as $V_H$ or $V_H(N)$.
A \emph{tree edge} is an undirected edge,
or a directed edge whose child is a tree node.
A \emph{hybrid edge} is a directed edge whose child is a hybrid node.
We denote the set of tree edges as $E_T$ or $E_T(N)$
and the set of hybrid edges as $E_H$ or $E_H(N)$.

A semidirected \emph{cycle} is a semidirected graph if its undirected
edges can be directed so that it becomes a directed cycle. A semidirected
graph is \emph{acyclic} if it does not contain a semidirected cycle.  We refer
to directed acyclic graphs as DAGs and to semidirected acyclic graphs as SDAGs.

Next, we define a more stringent notion of compatibility
to maintain the classification of nodes and edges
as being of tree versus hybrid type,
illustrated in Figs.~\ref{fig:rootedpartners} and~\ref{fig:phylocompatible}.

\begin{defn}[\textbf{phylogenetically compatible, rooted partner, network}]
  \label{phylogenetically compatible}
  An SDAG $N'$ is \emph{phylogenetically compatible} with another SDAG
  $N$ if $N'$ is compatible with $N$ and $E_H(N') = E_H(N)$.
  A \emph{rooted partner} of $N$ is
  a DAG that is phylogenetically compatible with $N$.
  A \emph{multi-root semidirected network}, or \emph{network} for short, is
  an SDAG that admits a rooted partner.
\end{defn}

Note that if SDAG $N'$ is phylogenetically compatible with SDAG $N$,
then $V_T(N) = V_T(N')$ and $V_H(N) = V_H(N')$.
Note also the rooted partner of a binary network may not be binary by
the typical definition for rooted networks, since the root can have degree $3$
instead of $2$.

Not all SDAGs are networks
(see Fig.~\ref{fig:rootedpartners} for an example).
We are interested in networks rather than general SDAGs,
because a network can represent evolutionary history up to ``rerooting'',
as captured by its rooted partners.
Fig.~\ref{fig:rootedpartners} gives an example network
($N$, top) and 2 of its 4 rooted partners (bottom).

\begin{figure}
\centerline{\includegraphics{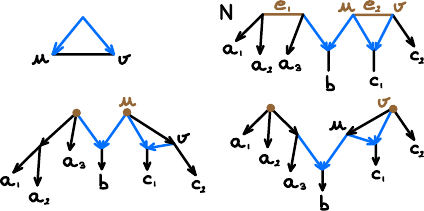}}
\caption{Examples of SDAGs and rooted partners.
  Top left: SDAG that is not a network, because it has no rooted partner:
  directing $uv$ would cause $u$ or $v$ to become a hybrid node, and result in
  a non-phylogenetically compatible DAG.
  Top right: $N$ is a network.
  Bottom: 2 of $N$'s 4 rooted partners.
  Each rooted partner has 2 roots (brown dots),
  one from each brown edge in $N$.
  Of the nodes incident to $e_2$ for example, either $u$ (bottom left) or
  $v$ (bottom right) can serve as root.
}\label{fig:rootedpartners}
\end{figure}

Traditionally, phylogenetic trees and networks are connected and with a single
root \citep{steel16_phylogeny}. We consider here
a broader class of graphs, allowing for multiple connected components and
multiple roots per connected component.
We also allow for non-simple graphs, that is, for multiple parallel
edges between the same two nodes $u$ and $v$,
directed and in the same direction for the graph to be acyclic.
With a slight abuse of notation, we keep referring to each parallel edge
as $(u,v)$.
Self-loops are not allowed as they would cause the graph to be cyclic.
The term ``rooted partner'' was introduced by \citet{2023LinzWicke}
in the context of traditional semidirected phylogenetic networks in which
the set of directed edges is precisely the set of hybrid edges,
and for which a rooted partner has a single root.

We will use the following results frequently. The first one is trivial.
\begin{prop}\label{prop:phylo-compat-only-directs-tree-edges}
  Let $N$ be an SDAG phylogenetically compatible with SDAG $N'$.
  Then $E_D(N) \setminus E_D(N') \subseteq E_T(N)$.
\end{prop}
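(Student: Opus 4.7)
The plan is essentially to unpack the definitions and read off the conclusion from the constraint $E_H(N)=E_H(N')$ imposed by phylogenetic compatibility.

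First I would isolate what the set $E_D(N)\setminus E_D(N')$ looks like. By Definition~\ref{phylogenetically compatible}, $N$ being phylogenetically compatible with $N'$ means $N$ is obtained from $N'$ by directing some of $N'$'s undirected edges; in particular, every edge that is already directed in $N'$ remains directed in $N$, so $E_D(N')\subseteq E_D(N)$, and an edge $e$ lying in $E_D(N)\setminus E_D(N')$ is precisely one that was undirected in $N'$ and has been directed in $N$.

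Next I would show such an $e$ must be a tree edge in $N$, proceeding by contradiction. Suppose $e=(u,v)\in E_D(N)\setminus E_D(N')$ and the child $v$ is a hybrid node of $N$. Then by the definition of hybrid edge in a semidirected graph, $e\in E_H(N)$. Phylogenetic compatibility forces $E_H(N)=E_H(N')$, hence $e\in E_H(N')$. But every hybrid edge is directed by definition, so $e\in E_D(N')$, contradicting the choice of $e$. Therefore the child of $e$ in $N$ is a tree node, i.e.\ $e\in E_T(N)$.

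There is no real obstacle here: the argument is a two-line definition chase once one observes that $E_H\subseteq E_D$ in any SDAG and that phylogenetic compatibility preserves the hybrid edge set by definition. The only thing to be slightly careful about is the direction of ``compatible with'': since $N$ is phylogenetically compatible with $N'$, it is $N$ that arises from $N'$ by orienting some edges, which is why $E_D(N)\supseteq E_D(N')$ and not the reverse inclusion.
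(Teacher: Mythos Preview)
Your proof is correct and is precisely the definition chase the paper has in mind; indeed the paper states the result as ``trivial'' and gives no explicit argument. Your care about the direction of compatibility (that $N$ arises from $N'$, hence $E_D(N')\subseteq E_D(N)$) is exactly the one point worth noting.
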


\begin{prop}\label{prop:network-tree-edge}
  Let $N$ be a network, and $N'$ the semidirected graph obtained from $N$ by
  undirecting some of the tree edges of $N$.  Then $N'$ is a network, and $N$ is
  phylogenetically compatible with $N'$.
\end{prop}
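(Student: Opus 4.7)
I would reduce to the case where $N'$ is obtained from $N$ by undirecting a single directed tree edge $e = (u,v)$: undirecting $e$ drops $\indegree(v)$ from $1$ to $0$ while preserving every node's tree/hybrid type, so the remaining tree edges of $N$ stay tree edges in each intermediate graph and iterating handles the general case. For this single-edge case, $E_H(N) = E_H(N')$ because the undirected edge was in $E_T(N)$ and node types are preserved; hence $N$ is phylogenetically compatible with $N'$, and any rooted partner $D$ of $N$ satisfies $E_H(D) = E_H(N')$ and directs all undirected edges of $N'$ (those of $N$, plus $uv$), so is also a rooted partner of $N'$. What remains is to show that $N'$ is an SDAG.

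\textbf{Acyclicity.} Assume $N'$ contains a semidirected cycle $C$. Since $N$ is an SDAG and differs from $N'$ only at $uv$, $C$ must use $uv$; and $C$ cannot orient $uv$ as $(u,v)$, since that would give a semidirected cycle in $N$ itself. So $C$ orients $uv$ as $(v,u)$, and the remainder of $C$ is a semidirected path $u = w_0, w_1, \ldots, w_m = v$ in $N \setminus \{(u,v)\}$ with $m \geq 2$. Working inside the rooted partner $D$ of $N$, I plan to show by backward induction on $j$ from $m$ down to $1$ that the path edge $w_{j-1}w_j$ is undirected in $N$ and is directed $(w_j, w_{j-1})$ in $D$, and that $w_{j-1}$ is a tree node. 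The base case uses that $v$ is tree in $D$ with in-edge $(u,v)$, forcing $w_{m-1}v$ to be undirected in $N$ and directed $(v, w_{m-1})$ in $D$. Inductively, if $w_{j-1}$ were hybrid, then $E_H(D) = E_H(N)$ would force every undirected $N$-edge at $w_{j-1}$ to be oriented out of $w_{j-1}$ in $D$, contradicting $(w_j, w_{j-1}) \in D$; so $w_{j-1}$ is tree with $D$-in-degree exactly one, which in turn forces the previous path edge to be undirected in $N$ and directed backward in $D$. Concatenating these arcs with $(u,v) \in D$ produces a directed cycle $u \to v \to w_{m-1} \to \cdots \to w_1 \to u$ in $D$, contradicting $D$ being a DAG.

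\textbf{Main obstacle.} The critical step is this backward induction inside $D$. It leans on both halves of phylogenetic compatibility — tree nodes remain tree with in-degree $\leq 1$, and the invariance $E_H(D) = E_H(N)$ forces undirected $N$-edges at any hybrid node to be oriented outward in $D$ — together with the existence of a DAG rooted partner. Without the network hypothesis the statement fails: for instance, the SDAG on $\{u,v,w\}$ with directed edges $(u,v), (u,w)$ and undirected edge $vw$ is acyclic, yet undirecting the tree edge $(u,v)$ yields the semidirected cycle $v \to u \to w \to v$; the obstruction there is precisely the absence of any phylogenetically compatible DAG partner.
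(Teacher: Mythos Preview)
Your proof is correct, but the acyclicity argument takes a genuinely different route from the paper's.

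The paper's argument is a one-step contradiction: take the edges of the putative semidirected cycle $C$ and look at them inside a rooted partner $G$. Since $G$ is a DAG these edges cannot form a directed cycle, so somewhere along $C$ there is a node $h$ with two incoming edges $(a,h),(b,h)$ in $G$. Then $h$ is hybrid, so both edges lie in $E_H(G)=E_H(N)$; being hybrid edges they are distinct from the single tree edge $(u,v)$ that was undirected, hence they survive as directed edges in $N'$. Two edges of $C$ pointing into the same node contradict $C$ being a semidirected cycle. This is shorter and avoids any induction.

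Your approach instead traces the cycle edge by edge inside the rooted partner $D$, showing by backward induction that every edge on the path $u=w_0,\dots,w_m=v$ must be undirected in $N$ and oriented $(w_j,w_{j-1})$ in $D$, ultimately manufacturing a directed cycle in $D$. The induction is sound: the key steps---that a tree node in $D$ has in-degree at most one, and that an undirected $N$-edge directed into a hybrid node in $D$ would violate $E_H(D)=E_H(N)$---are exactly the two consequences of phylogenetic compatibility you need. Your argument is more explicit about \emph{which} edges get forced, and your closing counterexample nicely isolates why the existence of a rooted partner is essential. The paper's argument, by contrast, buys brevity: it locates a single obstruction (the hybrid valley) rather than propagating constraints along the whole cycle.
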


\begin{proof}
  Let $A$ be the set of tree edges in $N$ that are undirected to obtain $N'$.
  We first consider the case when $A$ consists of a
  single edge $(u,v)$. We
  shall establish the following claims:
  \begin{enumerate}
  \item $N'$ is a network;
  \item the edges of $N'$ in $E(N) \setminus A$ retain their type
  (undirected, tree, or hybrid);
  \item $N$ is phylogenetically compatible with $N'$.
  \end{enumerate}

  For claim 1, suppose for contradiction that $N'$ is not an SDAG. Then there
  exists in $N'$ a semidirected cycle $C$ which contains $uv$. Let $G$
  be a rooted partner of $N$, and $C^+$ the subgraph made of the
  corresponding edges in $C$.  Since $C^+$ cannot be a directed cycle, there
  exist two hybrid edges $(a,h)$ and $(b,h)$ in $C^+$.
  Both are also directed in $N$ by phylogenetic compatibility.
  Because they are hybrid edges, they are distinct from
  $(u, v)$, so they are directed in $N'$ and $C$. This contradicts $C$
  being a semidirected cycle, showing that $N'$ is an SDAG.  Since it also
  admits $G$ as a rooted partner, $N'$ is a network.

  Claim 2 follows from the observation that the types of nodes (tree vs hybrid)
  in $N'$ stays the same.  Claim 3 follows from claim 2.

If $A$ contains multiple edges, then we iteratively undirect one edge at a time.
By the previous argument, the resulting graph at each step is a network
with which $N$ is phylogenetically compatible,
because phylogenetic compatibility is transitive.
Hence $N'$ is a network and $N$ is phylogenetically compatible with it.
\end{proof}

The next definition is motivated by the fact that phylogenies
are inferred from data collected at leaves, which are known
entities with labels (individuals, populations, or species),
whereas non-leaf nodes are inferred and unlabeled.

\begin{defn}[\textbf{unrooted, rooted and ambiguous leaves}]
A node $v$
in a network $N$
is a \emph{rooted leaf} in $N$ if
$v$ is a leaf in every rooted partner of $N$; and an \emph{unrooted leaf}
if it is a leaf in some rooted partner $N$.
We denote the set of rooted leaves and unrooted leaves as $V_{RL}(N)$
and $V_{UL}(N)$ respectively.
An \emph{ambiguous leaf} is a node in $V_{UL}(N) \setminus V_{RL}(N)$.
\end{defn}

\begin{figure}
\centering
\includegraphics{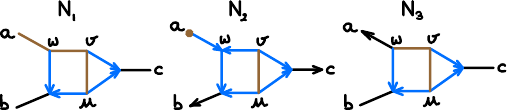}
\caption{Examples to illustrate definitions and notations.
  Directed edges ($E_D$) are shown with arrows; hybrid edges ($E_H$) in blue.
  Root components, whose nodes ($V_R$) can serve as roots, are shown in brown.
  $N_2$ is compatible but not phylogenetically compatible with $N_1$
  (e.g. $w$ is hybrid in $N_2$ and not in $N_1$).
  $N_3$ is phylogenetically compatible with $N_1$.
  Rooted leaves are $b$ and $c$ in $N_1$ and $N_2$;
  $a$ is an ambiguous leaf in $N_1$.
  $N_3$ has no ambiguous leaves so it is an $\mathcal{L}$-network
  on $\mathcal{L} = V_{UL}(N_3) = V_{RL}(N_3) = \{a,b,c\}$.
  $N_1$ has 1 root component.
  Its rooted partner rooted at $u$ is tree-child,
  but none of the others are (rooted at $a$, $w$ or $v$),
  so it is weakly tree-child.
  $N_2$ has 2 root components $R_1=\{a\}$ and $R_2=\{u,v\}$,
  and 2 rooted partners: one from the root choice
  $\rho(R_1)=a$, $\rho(R_2)=u$;
  and the other from the root choice
  $\rho(R_1)=a$, $\rho(R_2)=v$.
  $N_2$ is not tree-child in either sense, as none of its rooted partners are tree-child.
  $N_2 = \mathcal{C}(N_2)$ is complete.
  $N_3$ is not complete: in $\mathcal{C}(N_3)$, edges incident to $b$ and $c$
  are directed.
  }
\label{fig:phylocompatible}
\end{figure}

Clearly, $V_{RL}(N) \subseteq V_{UL}(N)$.
If $N$ is directed, then $V_{RL}(N) = V_{UL}(N) = V_L(N)$.
As we will see later in \Cref{lem:ULminusRL}, 
an ambiguous leaf
is of degree 1 in the undirected graph
obtained by undirecting all edges in $N$,
hence a leaf in the traditional sense.
In Fig.~\ref{fig:phylocompatible} for example,
$a$ is an ambiguous leaf in $N_1$ but a rooted leaf in $N_3$.

In \Cref{sec:SDAGproperties} we make no assumption regarding $V_{UL}(N)$
and $V_{RL}(N)$. But to extend $\mu$-vectors to semidirected graphs
in \Cref{sec:mu-rep}, we will need a stable set of leaves
across rooted partners.
For a vector of distinct labels $\mathcal{L}$, an $\mathcal{L}$-DAG is a DAG
whose leaves are bijectively labeled by $\mathcal{L}$.
To extend this definition we impose $V_{UL}(N) = V_{RL}(N)$ below,
but we will see (after \Cref{lem:ULminusRL})
that this requirement is less stringent than it appears.
\begin{defn}[\textbf{leaf-labeled network}]\label{def:labeled-network}
  A network $N$ is \emph{labeled in $\mathcal{L}$}
  and called an \emph{$\mathcal{L}$-network}, if
  $V_{UL}(N) = V_{RL}(N)$ and $V_{UL}(N)$ is bijectively labeled by elements
  of $\mathcal{L}$.
  The \emph{leaf set} of an $\mathcal{L}$-network $N$ is defined as
  $V_L(N) = V_{UL}(N)$.
\end{defn}

\noindent
For example, in Fig.~\ref{fig:phylocompatible} $N_3$ is an
$\mathcal{L}$-network with $\mathcal{L} = \{a,b,c\}$, while $N_1$ is not because
$V_{UL}(N_1) \neq V_{RL}(N_1)$.
$N_2$ is a $\{b,c\}$-network.

A rooted partner $G$ of an $\mathcal{L}$-network $N$ must be an
$\mathcal{L}$-DAG: since $V_{RL}(N) \subseteq V_L(G) \subseteq V_{UL}(N)$
generally, we have that $V_L(G) = V_L(N)$ and
$V_L(G)$ can inherit the labels in $N$.

Our main result concerns the class of tree-child graphs.
If $G$ is a DAG, it is \emph{tree-child} if every internal node $v$ of $G$
has at least one child that is a tree node \citep{2010HusonRuppScornavacca}.
Equivalently, $G$ is tree-child if every non-leaf node in $G$
has a tree descendant leaf \crv.
We extend this notion to semidirected networks,
illustrated in Fig.~\ref{fig:phylocompatible}.

\begin{defn}[\textbf{semidirected tree-child}]\label{semi-DAG tree-child}
  A network is \emph{weakly tree-child} if
  one of its rooted partners is tree-child.
  It is \emph{strongly tree-child}, or simply \emph{tree-child}, if
  all its rooted partners are tree-child.
\end{defn}
Since a DAG $G$ is a network with a single rooted partner,
$G$ is strongly and weakly tree-child
if and only if it is tree-child as a DAG.
In Fig.~\ref{fig:phylocompatible},
$N_3$ is weakly but not strongly tree-child:
of its 3 rooted partners, only one is tree-child.
Later in \Cref{prop:tree-child-iff}, we provide a characterization
easy to check without enumerating rooted partners.

\section{Properties of semidirected acyclic graphs}
\label{sec:SDAGproperties}

\begin{prop}\label{prop:SDG}
  A semidirected graph $N = (V,E)$ is acyclic if and only if the
  undirected graph induced by its undirected edges $E_U$ consists of trees only
  (i.e.\ is a forest),
  and $\contraction{N}$ is acyclic.
\end{prop}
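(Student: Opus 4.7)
The plan is to prove each direction separately, using that a semidirected cycle is, by definition, an undirected cycle whose undirected edges admit an orientation making all its edges form a directed cycle. The strategy is then to translate back and forth between semidirected cycles in $N$, undirected cycles in the graph on $E_U$, and directed cycles in $\contraction{N}$.

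For the forward direction, I argue contrapositively from each obstruction. If the undirected graph on $E_U$ contains a cycle $C$, then orienting its edges consistently around the cycle turns $C$ into a directed cycle, hence a semidirected cycle of $N$. If instead $\contraction{N}$ has a directed cycle $\bar v_0 \to \bar v_1 \to \cdots \to \bar v_{k-1} \to \bar v_0$, with the $k=1$ case covering a self-loop, each arrow lifts to a directed edge $(u_i, w_{i+1})$ of $N$ with $u_i \in \bar v_i$ and $w_{i+1} \in \bar v_{i+1}$. Within each equivalence class $\bar v_i$, the vertices $w_i$ and $u_i$ are connected by a path of undirected edges in $E_U$, by construction of the contraction quotient. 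Orienting each such path from $w_i$ toward $u_i$ and concatenating with the directed edges $(u_i,w_{i+1})$ yields a closed directed walk in an oriented version of $N$, from which I extract a simple directed sub-cycle; its edges that are undirected in $N$ inherit the required consistent orientation, so this sub-cycle is a semidirected cycle of $N$, contradicting acyclicity.

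For the backward direction, assume the $E_U$ graph is a forest and $\contraction{N}$ is acyclic, and suppose for contradiction that $N$ contains a semidirected cycle $C$ with directed orientation $C^+$. If every edge of $C$ lies in $E_U$, then $C$ is itself an undirected cycle in $E_U$, contradicting the forest hypothesis. Otherwise $C$ contains at least one edge that is already directed in $N$. I then project $C^+$ to $\contraction{N}$: a directed edge of $N$ in $C$ maps to a directed edge of $\contraction{N}$ — it cannot become a self-loop, since a directed edge of $N$ whose endpoints lie in the same $E_U$-component would create a self-loop, hence a cycle, in $\contraction{N}$ — while each undirected edge of $N$ in $C$ collapses to a vertex. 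The image is therefore a closed walk in $\contraction{N}$ containing at least one genuine arrow, and such a closed walk necessarily contains a directed cycle, contradicting the acyclicity of $\contraction{N}$.

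The main obstacle I anticipate is the lifting step in the forward direction: ensuring that the concatenated closed walk yields an actual semidirected cycle, that is, a simple cycle whose undirected edges can be consistently oriented. I expect to handle this by choosing a minimal directed cycle in $\contraction{N}$ together with shortest undirected paths inside each equivalence class, so that the resulting closed walk is already simple and its edge orientations are pairwise compatible by construction.
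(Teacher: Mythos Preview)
Your proposal is correct and follows essentially the same approach as the paper. Both directions proceed contrapositively by lifting a cycle in $\contraction{N}$ to a semidirected cycle in $N$ (forward), and by projecting a semidirected cycle of $N$ onto $\contraction{N}$ to obtain a directed cycle there unless the cycle lies entirely in $E_U$ (backward); you simply spell out the lifting and projection steps in more detail than the paper does.
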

\begin{proof}
  Let $N$ be a semidirected graph.
  Suppose that $N[E_U]$ is not a forest. Then there exists a compatible
  directed graph of $N[E_U]$ which contains a cycle that exists in a
  compatible directed graph of $N$, therefore $N$ is not acyclic.
  Next suppose
  that $\contraction{N}$ contains a
  cycle $C'$. Then there exists a
  compatible directed graph $G$ of $N$ which contains a cycle $C$ such
  that $C'$ is obtained from $C$ after contracting edges in $G$ that
  correspond to undirected edges in $N$, and so $N$ is not acyclic.

  Now suppose the graph induced by $E_U$ is a forest and that
  $\contraction{N}$ is acyclic. If $N$ is not acyclic, then by
  definition there exists a compatible directed graph
  $G$ that contains a cycle $C$. Since $\contraction{N}$ is acyclic, $C$ must
  contract into a single node in $\contraction{N}$. This implies that $C$ contains
  undirected edges only, hence is contained in
  $N[E_U]$, a contradiction. Therefore $N$ is acyclic.
\end{proof}

In a network $N$, a \emph{semidirected path} from $u_0$ to $u_n$ is a
sequence of
nodes $u_0u_1 \ldots u_n$, such that for
$i = 1, \ldots n$, either $u_{i-1}u_i$ or
$(u_{i-1},u_i)$ is an edge in $N$.
On $V(N)$ we define $v \lesssim u$ if there
is a semidirected path from $u$ to $v$, and
the associated equivalence relation: $u \sim v$ if $u \lesssim v$ and
$v \lesssim u$.
In Fig.~\ref{fig:phylocompatible}, $a \lesssim v$ in $N_1$ and $N_3$.
Also $v \lesssim a$ so $a \sim v$ in $N_1$, but not in $N_3$.
On equivalence classes,
$\lesssim$ becomes a partial order, with which we can define the following.
\begin{defn}[\textbf{undirected components, root components, directed part}]
\label{def:components}
  In a network $N$,
  an \emph{undirected component} is the subgraph
  induced by
  an equivalence class under $\sim$.
  A \emph{root component} of $N$ is
  an undirected component that is maximal under
  $\lesssim$.
  A root component is \emph{trivial} if it consists of a single node.
  The set of edges and nodes that are not in a root component is
  called the \emph{directed part} of $N$.  We denote the set of nodes
  in the directed part as $V_{DP}(N)$ and the set of edges $E_{DP}(N)$.
  We also denote the set of nodes in root components $V_R(N)$ and the set
  of edges $E_R(N)$.
\end{defn}

\medskip
The directed part of $N$ is generally not a subgraph:
it may contain an edge but not one of its
incident nodes.
In Fig.~\ref{fig:phylocompatible},
edges in the directed part $E_{DP}$ are those in black (tree edges)
and blue (hybrid edges). Edges in root components $E_R$ are in brown.
In $N_3$, $v$ is adjacent to the directed part,
but is
not in $V_{DP}(N_3)$.
$N_2$ has a trivial root component: $\{a\}$.
The following result
justifies the name given to the equivalence classes.

\begin{prop}
  \label{prop:same-component}
  For $u,v$ nodes in a network $N$, $u \sim v$ if and only if there
  is an undirected path between $u$ and $v$.
\end{prop}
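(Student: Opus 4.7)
The proof plan is to handle the two directions separately, and to reduce the non-trivial direction to the acyclicity of the contracted graph $\contraction{N}$ via \Cref{prop:SDG}.

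For the ($\Leftarrow$) direction, I would just observe that an undirected $u$-to-$v$ path, viewed as a sequence of nodes $u = u_0, u_1, \ldots, u_n = v$ with each $u_{i-1}u_i \in E_U$, simultaneously satisfies the definition of a semidirected path from $u$ to $v$ and of one from $v$ to $u$ (read in reverse), since undirected edges can be traversed in either orientation. Hence $u \lesssim v$ and $v \lesssim u$, i.e.\ $u \sim v$.

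For the ($\Rightarrow$) direction, suppose $u \sim v$, and let $P_1 = u_0 u_1 \cdots u_n$ and $P_2 = w_0 w_1 \cdots w_m$ be semidirected paths from $u$ to $v$ and from $v$ to $u$ respectively. The key observation is that each edge of $P_1$ that is directed in $N$ projects to a directed edge of $\contraction{N}$, from $\contraction{u_{i-1},N}$ to $\contraction{u_i,N}$, while each undirected edge of $P_1$ gets contracted away (both endpoints already lie in the same contracted node). Concatenating these contributions yields a directed walk in $\contraction{N}$ from $\contraction{u,N}$ to $\contraction{v,N}$, and symmetrically $P_2$ yields a directed walk from $\contraction{v,N}$ to $\contraction{u,N}$.

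Now I would appeal to \Cref{prop:SDG}: since $N$ is an SDAG (being a network), $\contraction{N}$ is acyclic. If $\contraction{u,N} \neq \contraction{v,N}$, concatenating the two directed walks above produces a closed directed walk through two distinct nodes of $\contraction{N}$, hence a directed cycle, contradicting acyclicity of $\contraction{N}$. Therefore $\contraction{u,N} = \contraction{v,N}$, meaning $u$ and $v$ lie in the same connected component of $N[E_U]$, so there is an undirected path between them. The only subtlety I anticipate is being careful that the ``walk'' in $\contraction{N}$ is genuinely directed and that, when $u$ and $v$ map to the same contracted node, the conclusion still holds automatically (since then $u$ and $v$ are connected in $N[E_U]$, and an undirected path exists as $N[E_U]$ is a forest by \Cref{prop:SDG}).
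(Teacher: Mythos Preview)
Your proof is correct and takes a genuinely different route from the paper. The paper argues directly in $N$: given semidirected paths $p_{uv}$ and $p_{vu}$, it does a case analysis on whether $p_{vu}$ visits exactly the same node sequence as $p_{uv}$ in reverse (in which case all edges must be undirected, else two parallel edges of opposite orientation would create a semidirected cycle), or deviates at some point (in which case one splices a subpath of $p_{vu}$ with a subpath of $p_{uv}$ to exhibit a semidirected cycle in $N$, contradicting acyclicity). Your approach instead passes to the quotient $\contraction{N}$ and invokes \Cref{prop:SDG} to know that $\contraction{N}$ is a DAG; the two semidirected paths project to directed walks in opposite directions between $\contraction{u,N}$ and $\contraction{v,N}$, which is impossible in a DAG unless these nodes coincide. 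Your argument is shorter and more conceptual, cleanly isolating the role of acyclicity via the structure already established in \Cref{prop:SDG}, whereas the paper's proof is self-contained but requires more careful bookkeeping on how the two paths overlap and diverge.
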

\begin{proof}
  Consider $u$ and $v$ connected by an undirected path. 
  Since this path does not contain any directed edge, clearly, $u \lesssim v$ and 
  $v \lesssim u$, hence $u \sim v$.

  Now suppose $u \sim v$. By definition, there exists
  a semidirected path $p_{uv}= u_0u_1 \ldots u_n$ from $u=u_0$ to $v=u_n$ and
  a semidirected path $p_{vu}$ from $v$ to $u$.
  If $p_{vu} = u_n u_{n-1}\ldots u_0$ then all edges in these paths must be
  shared and undirected.
  This is because $N$ is acyclic, in case $p_{uv}$ and $p_{vu}$
  have distinct edges incident to $u_i$ and $u_{i+1}$.
  Then, there is an undirected path between $u$ and $v$ as claimed.
  Otherwise, there exists $i_1\geq 0$ and $i_2 > i_1+1$ such that
  $p_{vu}$ is the concatenation of semidirected paths
  $u_n u_{n-1}\ldots u_{i_2}$;
  $p_{u_{i_2} u_{i_1}}$ from $u_{i_2}$ to $u_{i_1}$
  not containing any $u_j$ for $i_1<j<i_2$; and
  $u_{i_1} u_{i_1-1}\ldots u_0$.
  Then, the concatenation of $p_{u_{i_2} u_{i_1}}$ with
  $u_{i_1} u_{i_1+1} \ldots u_{i_2}$
  forms a
  semidirected cycle.
  Since $N$ is acyclic, this case cannot occur.
\end{proof}

We now characterize undirected components as
the undirected trees in the forest induced by $N$'s undirected edges.

\begin{prop}\label{prop:M-forest}
  Let $N$ be a network and $F$ the graph induced by the
  undirected edges of $N$.  Then $F$ is a forest where:
  \begin{enumerate}
  \item each tree corresponds to an undirected component of $N$, and
  has at most one node $v$ with $\indegree(v, N) \geq 1$;
  \item the root components of $N$ are exactly the trees without such nodes,
  and they contain tree nodes only.
  \end{enumerate}
\end{prop}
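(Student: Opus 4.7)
The starting point is the observation that $F$ is a forest by \Cref{prop:SDG}. For the correspondence with undirected components, the trees of $F$ are the connected components of $F$, whose nodes are exactly those joined by undirected paths. By \Cref{prop:same-component}, these are precisely the equivalence classes under $\sim$, which are the undirected components of $N$. So each tree of $F$ is vertex-wise an undirected component; since undirected components are induced by $E_U$-edges only, the correspondence is in fact an identification.

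For the ``at most one node with $\indegree(v,N) \geq 1$'' claim, the plan is to fix a rooted partner $G$ of $N$ and argue by contradiction. Suppose $u \neq v$ lie in the same tree $T$ of $F$ with $\indegree(u,N),\indegree(v,N) \geq 1$. Take the unique undirected path $u = w_0, w_1, \ldots, w_k = v$ inside $T$, and consider how its edges are oriented in $G$. The key observation is: the edge $w_0 w_1$ cannot be oriented $w_1 \to w_0$ in $G$, because doing so would either raise $u$'s in-degree above its in-degree in $N$ (if $u$ is a tree node in $N$, this makes $u$ hybrid in $G$, contradicting phylogenetic compatibility by \Cref{prop:phylo-compat-only-directs-tree-edges}) or it would make a previously tree edge have a hybrid child in $G$, changing $E_H$ (if $u$ is already hybrid in $N$). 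Either way phylogenetic compatibility fails, so $w_0 w_1$ must orient $w_0 \to w_1$ in $G$. The symmetric argument forces $w_{k-1} w_k$ to orient $w_k \to w_{k-1}$. Hence along the path the orientation must flip at some intermediate $w_i$, where both incident path edges point into $w_i$. This forces $\indegree(w_i, G) \geq 2$, so $w_i$ is hybrid in $G$, while in $N$ these two edges are undirected. Whether $w_i$ is a tree node or hybrid node in $N$, we again get a contradiction with phylogenetic compatibility: either the type of $w_i$ changes, or the set of hybrid edges gains at least one path edge absent from $E_H(N)$. The only delicate step is treating the ``$w_i$ already hybrid in $N$'' case carefully, which is why we frame both sub-cases in terms of $E_H(N) = E_H(G)$ rather than the node types alone.

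For claim 2, I would first show that a tree $T$ of $F$ has no node with $\indegree(\cdot, N) \geq 1$ if and only if $T$ is maximal under $\lesssim$. For the forward direction, suppose some $y \in T$ satisfies $y \lesssim x$ for $x \notin T$; take a semidirected path from $x$ to $y$ and look at the first edge entering $T$. It cannot be undirected (else its endpoints would be $\sim$-equivalent, placing both in $T$), hence it is a directed edge into a node of $T$, giving a node of $T$ with positive in-degree. For the reverse direction, if $(w,v)$ is a directed edge with $v \in T$, then $w \notin T$: otherwise $w \sim v$ would give an undirected path from $v$ to $w$, which together with $(w,v)$ is a semidirected cycle, contradicting acyclicity. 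Hence $T$ is not maximal.

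Finally, since every node in a root component has $\indegree(\cdot, N) = 0$, it satisfies $\indegree \leq 1$ and is therefore a tree node, establishing the last assertion. The main obstacle, as above, is the branching in the phylogenetic-compatibility argument of claim 1; everything else is a direct unpacking of the definitions of $\sim$, $\lesssim$, and acyclicity via \Cref{prop:SDG} and \Cref{prop:same-component}.
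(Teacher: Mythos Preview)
Your proof is correct and follows essentially the same route as the paper: both use \Cref{prop:SDG} and \Cref{prop:same-component} to identify the trees of $F$ with undirected components, both derive claim~1 by showing that two nodes of positive in-degree in the same tree would force a new hybrid edge in any rooted partner, and both handle claim~2 by chasing a semidirected path to locate a directed edge entering $T$. The only difference is granularity: where the paper simply asserts ``it would be impossible to direct the edges in $T$ without making one of them hybrid'', you spell out the path-orientation argument and the $E_H(N)=E_H(G)$ contradiction explicitly --- your case split on whether the offending node is already hybrid in $N$ is not actually needed (both cases reduce to an undirected edge becoming a hybrid edge in $G$), but it does no harm.
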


\begin{proof}
  By \Cref{prop:SDG}, $F$ is a forest.
  By \Cref{prop:same-component}, each tree in $F$ is an undirected component.
  If a tree $T$ in $F$ had
  more than one node $v$ with $\indegree(v, N) \geq 1$, it would be
  impossible to direct the edges in $T$ without making one of them hybrid,
  contradicting the existence of a rooted partner of $N$.

  For the second claim, let $T$ be a tree in $F$.
  Note that $(u,v)\in E_D$ implies that
  $u \not\sim v$
  and $v$'s equivalence class is not maximal under $\lesssim$.
  So if $T$ is maximal under $\lesssim$, then all nodes $v$ in $T$ have $\indegree(v, N)=0$,
  which implies that $v$ is a tree node.
  Conversely, if $T$ is not maximal, then there exist nodes $v$ in $T$,
  $u\gtrsim v$ in a different tree of $F$, and a semidirected path
  from $u$ to $v$ containing a directed edge. Taking the last directed edge
  on this path gives a node $v'$ in $T$ with $\indegree(v',N)\geq 1$.
\end{proof}

\begin{lem}\label{undirected before directed}
  Let $N$ be a network and $G$ a rooted partner of $N$.
  Let $v \in V_R(N)$. If $v < u$ in $G$, then $u \in V_R(N)$.
\end{lem}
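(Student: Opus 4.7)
The plan is to walk the witnessing directed path of $G$ backward from $v$ to $u$, and show by induction that every node on it lies in the same root component as $v$. The key engine is the characterization of root components in Proposition~\ref{prop:M-forest}: a root component contains only nodes of in-degree $0$ in $N$. So no edge directed in $N$ can enter a root component.

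More precisely, let $R$ be the root component of $N$ containing $v$, and unpack $v<u$ in $G$ as a directed path $u=u_0\to u_1\to\cdots\to u_k=v$ in $G$ with $k\ge 1$. For each $i=k,k-1,\ldots,1$ I would argue: if $u_i\in R$, then $u_{i-1}\in R$. Indeed, since $u_i\in R$ we have $\indegree(u_i,N)=0$ by Proposition~\ref{prop:M-forest}(2), so the edge $(u_{i-1},u_i)\in E(G)$ cannot correspond to an edge of $E_D(N)$ (otherwise it would contribute to $\indegree(u_i,N)$). Hence it corresponds to an edge of $E_U(N)$, which by Proposition~\ref{prop:M-forest}(1) lies entirely inside a single tree of the forest $F=N[E_U]$; that tree is exactly the undirected component $R$, so $u_{i-1}\in R$.

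Starting the induction at $u_k=v\in R$ and iterating gives $u=u_0\in R$, hence $u\in V_R(N)$.

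There is no real obstacle here; the only thing to be careful about is the identification between edges of $G$ and edges of $N$: each edge of $G$ either matches a directed edge of $N$ or is the orientation of an undirected edge of $N$, and only the latter can have its head inside a root component. Once that observation is in place, the statement is immediate by backward induction along the path.
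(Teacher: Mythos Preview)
Your proof is correct. It differs from the paper's, which is a three-line order-theoretic argument: from $v<u$ in $G$ one gets a semidirected path $u\leadsto v$ in $N$, hence $v\lesssim u$; since $v$ lies in a root component, its equivalence class is maximal under $\lesssim$ (Definition~\ref{def:components}), forcing $u\lesssim v$ as well, so $u\sim v$ and $u\in V_R(N)$. Your approach is more structural: you walk the path edge by edge and invoke Proposition~\ref{prop:M-forest} to show each edge must be undirected in $N$, hence stays inside the same tree of the forest $F$. The paper's route is shorter and uses only the definition of root components; yours is more explicit about \emph{why} no directed edge of $N$ can enter a root component, and in fact proves the slightly stronger fact that the entire $u\leadsto v$ path lies in $E_R(N)$, which is what later arguments (e.g.\ Proposition~\ref{prop:directed-part-mu-vec}) also need.
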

\begin{proof}
  Let $v < u$ in $G$. Then $v \lesssim u$ in $N$.
  By Definition~\ref{def:components}, $u \lesssim v$ in $N$ as well.
  Therefore, $u$ belongs to the same root component as $v$, hence $u \in V_R(N)$.
\end{proof}

\begin{prop}\label{prop:fixeddirection-in-DP}
  Let $N$ be a network. Let $G_1$ and $G_2$ be rooted partners of $N$.
  Then
  edges in $E_{DP}(N)$ have the same direction in $G_1$ and $G_2$.
\end{prop}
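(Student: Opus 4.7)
The plan is to split $E_{DP}(N)$ into two pieces and handle them separately. Any edge in $E_D(N)$ is already directed in $N$ itself, and since by \Cref{prop:phylo-compat-only-directs-tree-edges} a rooted partner of $N$ can only direct edges that are undirected in $N$, every such edge retains its $N$-direction in both $G_1$ and $G_2$. The substance of the proposition therefore concerns an undirected edge $uv \in E_U(N) \cap E_{DP}(N)$, which lies in some non-root undirected component $T$ of $N$.

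By \Cref{prop:M-forest}, $T$ is a tree in the forest induced by $E_U(N)$ and, because $T$ is not a root component, contains exactly one node $v^*$ with $\indegree(v^*,N) \geq 1$. The key claim I would establish is that in \emph{any} rooted partner $G$ of $N$, every edge of $T$ is oriented away from $v^*$. Once this claim is in hand the proposition is immediate: the orientation of $uv$ is forced identically in $G_1$ and in $G_2$.

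I would prove the claim by induction on the distance in $T$ from $v^*$. For the base step, consider an edge of $T$ incident to $v^*$, and split on whether $v^*$ is a tree or a hybrid node in $N$. If $v^*$ is a tree node, then $\indegree(v^*,N) = 1$; phylogenetic compatibility preserves tree/hybrid status, so $\indegree(v^*,G) \leq 1$, and the unique incoming edge to $v^*$ already exists in $N$, leaving no room for any edge of $T$ to be oriented into $v^*$ in $G$. If instead $v^*$ is hybrid, then it is hybrid in $G$ as well, so every directed edge into $v^*$ in $G$ must be a hybrid edge; but the edges of $T$ are undirected in $N$ and hence absent from $E_H(N)=E_H(G)$, so none of them may be directed into $v^*$. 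For the inductive step, any other node $u \in T$ has $\indegree(u,N) = 0$ (by uniqueness of $v^*$), so $u$ is a tree node in both $N$ and $G$, giving $\indegree(u,G) \le 1$; by induction the $T$-edge from $u$'s neighbor closer to $v^*$ is already directed into $u$ in $G$, which exhausts $u$'s indegree budget and forces every remaining $T$-edge at $u$ to point away from $u$.

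The main obstacle is the hybrid subcase of the base step: the argument cannot be run off of indegree counts alone, because a hybrid $v^*$ a priori admits additional incoming edges in $G$. The resolution is to invoke the $E_H(N)=E_H(G)$ half of phylogenetic compatibility to forbid those incoming edges from coming from $T$. Once that observation is made the remainder of the proof is structural bookkeeping on the tree $T$.
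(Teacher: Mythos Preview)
Your proof is correct and follows essentially the same approach as the paper's: both reduce to a non-root undirected component $T$, invoke \Cref{prop:M-forest} to obtain the unique node $v^*$ with $\indegree(v^*,N)\ge 1$, and conclude that $T$ must be oriented away from $v^*$ in any rooted partner. The paper simply asserts in one line that ``$v_0$ must be the root of $T$ \ldots\ for $G_1$ and $G_2$ to be phylogenetically compatible with $N$''; your induction on distance from $v^*$, together with the tree/hybrid case split at $v^*$, supplies the details the paper leaves implicit.
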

\begin{proof}
  Let $T$ be an undirected component of $N$ that is not a root component.
  We need to show that $T$'s edges have the same direction in $G_1$ and $G_2$.
  By \Cref{prop:M-forest}, $T$ is an undirected tree in $N$, and has exactly one
  node $v_0$ with an incoming edge in $N$.
  Then $v_0$ must be the root of $T$ in both $G_1$ and $G_2$,
  for $G_1$ and $G_2$ to be phylogenetically compatible with $N$,
  which completes the proof.
\end{proof}

\begin{defn}[\textbf{root choice function}]
  Let $N$ be a network, and $\mathcal{R}$ be the set of root components
  of $N$.  A \emph{root choice function} of $N$ is a function
  $\rho: \mathcal{R} \to V(N)$ such that for a root component
  $T \in \mathcal{R}$, $\rho(T) \in V(T)$.
  In other words, $\rho$
  chooses a node for each root component.
\end{defn}

Conceptually, a semidirected network represents uncertainty
about the root(s) position.
Next, we show that to resolve uncertainty,
exactly 1 node from each root component must be chosen as root,
and this choice can be made independently across root components.
In other words, root choice
functions are in one-to-one correspondence with rooted partners.
As a result, all rooted partners have the same number of roots:
the number of root components.
In Fig.~\ref{fig:rootedpartners}, $N$ has
2 root components each with 2 nodes, hence $2\times 2 = 4$ rooted partners.
\begin{prop}
  \label{prop:network-rooting}
  Given a root choice function $\rho$ of a network $N$, there exists a
  unique rooted partner $N^+_\rho$ of $N$ such that the set of roots in $N^+_\rho$
  is the image of $\rho$.
  Conversely, given any rooted partner $G$ of $N$, there exists a unique root 
  choice function $\rho$ such that $G = N^+_\rho$.
\end{prop}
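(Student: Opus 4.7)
The plan is to establish the bijection by reducing the orientation of each undirected component to a well-understood tree-orientation problem. The pivotal observation is that, because phylogenetic compatibility preserves the tree-versus-hybrid classification of nodes, any rooted partner must orient each tree in the forest of \Cref{prop:M-forest} so that every node has in-degree at most $1$ from that tree's edges; a simple counting argument (sum of in-degrees equals $|V(T)|-1$) then forces each such tree to be oriented as an arborescence rooted at a single source.

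For the forward direction, given $\rho$, I would define $N^+_\rho$ by keeping every directed edge of $N$ and orienting each undirected edge as follows: in each non-root undirected component $T$, with unique node $v_0$ satisfying $\indegree(v_0,N)\geq 1$ from \Cref{prop:M-forest}, orient $T$'s edges to form the arborescence rooted at $v_0$; in each root component $T$, orient its edges to form the arborescence rooted at $\rho(T)$. I would then verify that $N^+_\rho$ is acyclic (a directed cycle in $N^+_\rho$ would correspond to a semidirected cycle in $N$), that it is phylogenetically compatible with $N$ (every newly directed edge has a child with $\indegree(\cdot,N) = 0$, hence a tree node in $N$ that remains one, so no new hybrid edge is created), and that the root set of $N^+_\rho$ equals $\rho(\mathcal{R})$: each $\rho(T)$ has in-degree $0$ in both $N$ and $N^+_\rho$, while every other node receives at least one incoming edge in $N^+_\rho$.

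For uniqueness of $N^+_\rho$, let $G$ be any rooted partner of $N$ whose set of roots is $\rho(\mathcal{R})$. Edges of $E_{DP}(N)$ are oriented as in $N^+_\rho$ by \Cref{prop:fixeddirection-in-DP}. For each root component $T$, every node of $T$ has $\indegree(\cdot,N) = 0$ and must remain a tree node in $G$, so its in-degree in $G$, coming entirely from $T$-edges, is at most $1$; since the $|V(T)|-1$ oriented edges of $T$ contribute a total in-degree sum of $|V(T)|-1$, exactly one node of $T$ is a source, which must be the unique root of $G$ lying in $T$, namely $\rho(T)$. So $T$ is oriented as the arborescence rooted at $\rho(T)$, matching $N^+_\rho$.

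For the converse, given any rooted partner $G$ of $N$, I would set $\rho(T)$, for each root component $T$, to be the unique source of the induced orientation of $T$ in $G$ (existence and uniqueness come from the same counting argument, using that all nodes of $T$ stay tree nodes). I then need to check that the roots of $G$ are exactly $\{\rho(T): T\in\mathcal{R}\}$, which reduces to showing that no node in a non-root component can be a root of $G$: the entry node $v_0$ has $\indegree(v_0,N)\geq 1$, and every other node in a non-root component $T$ receives exactly one $T$-edge in the orientation forced by \Cref{prop:fixeddirection-in-DP}. Then $G = N^+_\rho$ follows from the forward direction's uniqueness. I expect the main obstacle to be the root-component orientation argument, where phylogenetic compatibility must be combined with edge counting to pin down the arborescence structure; everything else is bookkeeping once \Cref{prop:M-forest} and \Cref{prop:fixeddirection-in-DP} are in hand.
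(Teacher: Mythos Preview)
Your proposal is correct and follows essentially the same route as the paper: construct $N^+_\rho$ by orienting each undirected component as an arborescence (at $v_0$ for non-root components, at $\rho(T)$ for root components), invoke \Cref{prop:fixeddirection-in-DP} for $E_{DP}(N)$, and use \Cref{prop:M-forest} to pin down the orientation of each root component. The only cosmetic differences are that the paper orients $E_{DP}(N)$ by borrowing from a pre-existing rooted partner $G_0$ rather than describing the arborescence at $v_0$ explicitly, and that your in-degree counting argument spells out what the paper compresses into ``$T$ is an undirected tree rooted by the same $\rho(T)$''.
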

\begin{proof}
  Let $G_0$ be a rooted partner of $N$.
  For a root choice function $\rho$, let $N^+_\rho$ be the graph
  compatible with $N$ obtained by directing edges in $E_{DP}(N)$ as they are in $G_0$,
  and away from $\rho(T)$ in any root component $T$
  (which is possible by \Cref{prop:M-forest}).
  $N^+_\rho$ is a DAG because $N$ is acyclic.
  To prove that $N^+_\rho$ is phylogenetically compatible with $N$
  (and hence a rooted partner), we need to show that $E_H(N^+_\rho)=E_H(G_0)$.
  Since all edges in $N$ incident to both $V_R=V_R(N)$ and
  $V_{DP}=V_{DP}(N)$ are directed
  from $V_R$ to $V_{DP}$, they have the same direction in $N^+_\rho$ and $G_0$.
  Therefore nodes in $V_{DP}$ and edges in $E_{DP}(N)$ are of the same type in
  $N^+_\rho$ and $G_0$ (and $N$).
  Furthermore, by construction and~\Cref{prop:M-forest}, all edges in
  $E_R(N)$ remain of tree type in both $N^+_\rho$ and $G_0$.
  Hence $N^+_\rho$ is a rooted partner of $N$.
  Finally, the root set of $N^+_\rho$ is the image of $\rho$ because
  a root component's root is a root of the full network
  (from $\indegree(v, N) = 0$ for any $v\in V_R$)
  and because $V_{DP}$ cannot contain any root of $N^+_\rho$
  (by \Cref{prop:M-forest} again).

  To prove that $N^+_\rho$ is unique,
  let $G$ be a rooted partner of $N$ whose set of roots is the image of $\rho$.
  By \Cref{prop:fixeddirection-in-DP},
  edges in $E_{DP}(N)$ have the same direction in $G$ and $N^+$.
  For a root component $T$, $G[V(T)] = N^+_\rho[V(T)]$ because
  $T$ is an undirected tree in $N$, rooted by the same $\rho(T)$
  in both $G$ and $N^+_\rho$. Therefore $G=N^+_\rho$.

  Let $G$ be a rooted partner of $N$.
  By \Cref{prop:M-forest} and phylogenetic compatibility,
  $G$ must have exactly one root in each root component $T$.
  Define $\rho$ such that
  $\rho(T)$ is this root of $G$ in $T$.
  By the arguments above, $G$ cannot have any root in $V_{DP}$,
  and then $G=N^+_\rho$.
\end{proof}

We can define the following
thanks to \Cref{prop:fixeddirection-in-DP}:

\begin{defn}[\textbf{network completion}]\label{def:semiDAG-completion}
  The \emph{completion} $\mathcal{C}(N)$ of a network $N$ is the semidirected graph
  obtained from $N$ by directing every undirected edge in its directed part,
  as it is in any rooted partner of $N$.  More precisely, let $G$ be
  a rooted partner of $N$.
  We direct $uv \in E_{DP}(N)$ as $(u, v)$ in $\mathcal{C}(N)$ if
  $(u, v) \in E(G)$.
  A network $N$ is \emph{complete} if $\mathcal{C}(N) = N$.
\end{defn}

\begin{prop}\label{prop:completioniscompatible}
  For a network $N$, $\mathcal{C}(N)$ is a network and phylogenetically
  compatible with $N$.
\end{prop}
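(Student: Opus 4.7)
The plan is to fix any rooted partner $G_0$ of $N$ (whose existence comes with the definition of a network) and use it as the witness both for defining $\mathcal{C}(N)$ and for being a rooted partner of $\mathcal{C}(N)$. I would first note that $\mathcal{C}(N)$ is well-defined, since \Cref{prop:fixeddirection-in-DP} guarantees the orientations in $E_{DP}(N)$ do not depend on the choice of $G_0$. Then I would split the work into three claims: (i) $\mathcal{C}(N)$ is an SDAG, (ii) $E_H(\mathcal{C}(N))=E_H(N)$, and (iii) $G_0$ is a rooted partner of $\mathcal{C}(N)$. Together, (i) and (iii) give that $\mathcal{C}(N)$ is a network, and (ii) together with the fact that $\mathcal{C}(N)$ is trivially compatible with $N$ gives phylogenetic compatibility.

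For (i), I would argue that any semidirected cycle in $\mathcal{C}(N)$ would yield, after directing its remaining undirected edges arbitrarily, a directed cycle in a graph compatible with $N$, contradicting $N$ being acyclic. For (iii), it suffices to note that $G_0$ can be obtained from $\mathcal{C}(N)$ by directing the remaining undirected edges (those in root components, all still undirected in $\mathcal{C}(N)$) exactly as they appear in $G_0$; so $G_0$ is compatible with $\mathcal{C}(N)$, and once (ii) is established, $E_H(G_0)=E_H(N)=E_H(\mathcal{C}(N))$ gives phylogenetic compatibility.

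The main work, and the step I expect to be the main obstacle, is (ii): showing that directing the edges in $E_{DP}(N)$ that were undirected in $N$ does not create any new hybrid edges or destroy any existing ones. For this I would rely on \Cref{prop:M-forest}. Every undirected edge $e$ in $E_{DP}(N)$ lies in an undirected component $T$ that is not a root component, and by \Cref{prop:M-forest} such a $T$ has exactly one node $v_0$ with $\indegree(v_0,N)\geq 1$. Because the orientations in $\mathcal{C}(N)$ coincide with those in $G_0$, they orient $T$ as a tree rooted at $v_0$ (since phylogenetic compatibility of $G_0$ with $N$ forces $v_0$ to be $T$'s root in $G_0$). Consequently, every node of $T$ other than $v_0$ acquires in-degree exactly $1$ in $\mathcal{C}(N)$ and hence remains a tree node, while $v_0$'s in-degree is unchanged so its type (tree or hybrid) in $N$ is preserved. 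Since the completion does not touch any node outside these undirected components of the directed part, no node switches type between $N$ and $\mathcal{C}(N)$, i.e.\ $V_H(\mathcal{C}(N))=V_H(N)$. From this, $E_H(\mathcal{C}(N))=E_H(N)$ follows immediately, because an edge is hybrid iff its child is hybrid, and no newly directed edge can be hybrid as its child is a (newly tree-oriented) tree node in $\mathcal{C}(N)$.

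Putting (i)--(iii) together yields the proposition. No separate argument is needed to connect phylogenetic compatibility with $N$ being a network: compatibility is immediate from the construction of $\mathcal{C}(N)$, and the equality of hybrid edge sets has been established in (ii).
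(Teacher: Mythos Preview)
Your proof is correct. The argument for (ii) via \Cref{prop:M-forest}---tracking in-degrees through each non-root undirected component rooted at its unique $v_0$---is sound, and (i) and (iii) follow as you say.

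The route differs from the paper's, however. The paper works from the other end: it views $\mathcal{C}(N)$ not as ``$N$ with the directed-part edges oriented'' but as ``a rooted partner $N^+_\rho$ with the root-component edges undirected''. Since the proof of \Cref{prop:network-rooting} already established that every edge of $E_R(N)$ is a tree edge in $N$, in $\mathcal{C}(N)$, and in $N^+_\rho$, the equality $E_H(\mathcal{C}(N))=E_H(N^+_\rho)=E_H(N)$ is immediate, and $N^+_\rho$ is then a rooted partner of $\mathcal{C}(N)$; this dispatches all three of your claims in one stroke. Your approach is more self-contained (it does not reach back into the proof of \Cref{prop:network-rooting}) and makes the preservation of node types fully explicit, at the cost of redoing some of that analysis. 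The paper's approach is shorter because it leverages prior work, effectively an application of \Cref{prop:network-tree-edge} in spirit.
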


\begin{proof}
  From \Cref{prop:network-rooting}, any rooted partner of $N$
  is of the form $N^+_{\rho}$. As seen in the proof of \Cref{prop:network-rooting},
  $\mathcal{C}(N)$ and $N^+_{\rho}$ differ in root components only:
  if $e\in E_R(N)$ then $e$ is undirected in $N$ and in $\mathcal{C}(N)$,
  directed in $N^+_{\rho}$, and is a tree edge in all. Therefore
  $\mathcal{C}(N)$ is phylogenetically compatible with $N$ and is a network
  (admitting $N^+_{\rho}$ as rooted partner).
\end{proof}

\begin{remark}\label{rem:alg-completion}
Propositions~\ref{prop:M-forest} and~\ref{prop:network-rooting}
yield practical algorithms. Finding the root
components requires only traversing the network $N$, tracking the forest $F$
of undirected components, and which nodes have nonzero in-degree.
Computing $\mathcal{C}(N)$ then consists in
directing the edges away from such a node in each tree of $F$, if
one exists.
In particular, in a single traversal of $N$ we can construct a rooted partner
$G$, record the roots of $G$, record the edges of $G$ that were in
root components of $N$, and for each such edge record the corresponding root.
To do this, for each tree $T$ in $F$ that is a root component, we arbitrarily
choose and record a node $u$ as root, direct the edges in $T$ away from
$u$, record these edges as belonging to a root component of $N$, and for all
these edges record $u$ as the corresponding root.  We then direct the rest of
the undirected edges the same way as when computing the completion.
We shall use this in \Cref{alg:edge-mu-rep} later.
\end{remark}

With as many directed edges as can be possibly
implied by the directed edges in $N$, $\mathcal{C}(N)$ is the network that
we are generally interested in. We define
phylogenetic isomorphism between networks based on their completion.

\begin{defn}[\textbf{network isomorphism}]\label{def:isomorphism}
  $\mathcal{L}$-networks $N$ and $N'$ are
  \emph{phylogenetically isomorphic}, denoted by $N \cong N'$,
  if $\mathcal{C}(N)$ and
  $\mathcal{C}(N')$ are isomorphic as semidirected graphs,
  with an isomorphism that preserves the leaf labels.
\end{defn}

In Fig.~\ref{fig:phylocompatible} for example,
$N_2$ is complete, but $N_3$ is not.
$N_1$ and $N_3$ are not phylogenetically isomorphic, because the edge incident
to $a$ remains undirected in the completion $\mathcal{C}(N_1)$.
$N_2$ is not phylogenetically compatible with $N_1$ or $N_3$,
and not phylogenetically isomorphic to either.

\begin{lem}\label{lem:directed-part-edge-and-node}
  Let $N$ be a complete network.
  There exists a directed edge $(u,v)$ in $N$
  if and only if $v \in V_{DP}(N)$.
\end{lem}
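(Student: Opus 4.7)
The plan is to prove each implication separately, with the forward direction being immediate and the converse relying essentially on the completeness hypothesis.

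For the forward direction ($\Rightarrow$), suppose $(u,v) \in E_D(N)$. Then $\indegree(v, N) \geq 1$. By \Cref{prop:M-forest}(2), every root component of $N$ contains only nodes with in-degree zero in $N$. Therefore $v$ cannot lie in any root component, so $v \in V_{DP}(N)$. Note that this step does not use completeness.

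For the converse ($\Leftarrow$), suppose $v \in V_{DP}(N)$, i.e.\ $v$ does not lie in any root component. The key observation is that in a complete network, every undirected edge belongs to a root component: by \Cref{def:semiDAG-completion}, every undirected edge of $N$ lying in $E_{DP}(N)$ becomes directed in $\mathcal{C}(N) = N$, so no undirected edge of $N$ lies in the directed part. Since both endpoints of an undirected edge belong to the same undirected component (by \Cref{prop:same-component}), any undirected edge incident to $v$ would place $v$ in a root component, contradicting $v \in V_{DP}(N)$. Hence $v$ is incident to no undirected edge, and the undirected component containing $v$ is the singleton $\{v\}$.

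Since $\{v\}$ is not a root component, it is not maximal under $\lesssim$. Thus there exists a node $w$ with $v \lesssim w$ and $w \not\sim v$, which by definition yields a semidirected path of positive length from $w$ to $v$. Its final edge is incident to $v$; since $v$ has no undirected edges, this edge must be a directed edge of the form $(u,v) \in E_D(N)$, as desired. The main subtlety is in the converse: one must first use completeness to eliminate undirected edges at $v$, and only then invoke non-maximality of $\{v\}$ to produce the required incoming directed edge. Without completeness, a node $v \in V_{DP}(N)$ could sit strictly inside a non-trivial, non-root undirected component and have in-degree zero in $N$, so the statement genuinely needs the hypothesis $N = \mathcal{C}(N)$.
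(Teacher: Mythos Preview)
Your proof is correct and follows essentially the same approach as the paper. The only minor difference is in the converse direction: once you establish that the undirected component of $v$ is the singleton $\{v\}$, the paper invokes \Cref{prop:M-forest} directly to conclude $\indegree(v)\geq 1$ (since a non-root undirected component must contain a node with positive in-degree), whereas you re-derive this via the non-maximality of $\{v\}$ under $\lesssim$ and a semidirected-path argument---which is precisely how \Cref{prop:M-forest}(2) itself was proved. Both routes are valid and equally short.
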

\begin{proof}
  If $v \in V_{DP}(N)$, its undirected component $U$ has no
  undirected edges by \Cref{def:semiDAG-completion}.
  Then by \Cref{prop:M-forest}, $U=\{v\}$ and $\indegree(v) \geq 1$,
  so $v$ is the child of some directed edge.
  Conversely,
  if there exists $(u,v)\in E(N)$ then $v$'s equivalence class
  is not maximal
  and $v \in V_{DP}(N)$.
\end{proof}

Using 
$\mathcal{C}(N)$, we can now decide if a network is
weakly or strongly tree-child in a single
traversal, thanks to the following.

\begin{prop}\label{prop:tree-child-iff}
  Let $N$ be a complete network,
  $\mathcal{R}$ its set of root components, and
  $W_0$ the set of nodes that form trivial root components.
  For $T \in \mathcal{R}$, let $W_1(T)$ be the set of nodes $u$ in $T$
  adjacent to $V_{DP}(N)$
  with $\undegree(u) = 1$ and without a tree-child in $N$.
  $N$ is weakly (resp. strongly) tree-child if and only if
  every non-leaf node
  in $V_{DP}(N) \cup W_0$ has a tree child in $N$;
  and for every $T\in \mathcal{R}$, $|W_1(T)|\leq 1$ (resp. $W_1(T)=\emptyset$).
\end{prop}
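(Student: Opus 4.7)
The plan is to characterize, for an arbitrary rooted partner $G = N^+_\rho$ obtained via \Cref{prop:network-rooting} from a root choice function $\rho$, exactly when $G$ is tree-child, as a condition on $\rho$. Since $N$ is complete, all rooted partners share the same orientation of $E_{DP}(N)$ by \Cref{prop:fixeddirection-in-DP}, so the weak and strong tree-child statements reduce respectively to an existential and a universal quantification over $\rho$ of a single condition.

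First I would classify each $v \in V(N)$ by its location. For $v \in V_{DP}(N)$, \Cref{lem:directed-part-edge-and-node} and completeness imply that the in- and out-degree, the set of children, and the tree-vs-hybrid type of every child of $v$ in $G$ coincide with those in $N$; hence ``$v$ is a non-leaf in $G$ with a tree child in $G$'' is equivalent to the same statement in $N$, and is independent of $\rho$. The same reasoning applies to $v \in W_0$, whose trivial root component $\{v\}$ contributes no undirected edges. Therefore ``every non-leaf in $V_{DP}(N) \cup W_0$ has a tree child in $N$'' is necessary for $G$ to be tree-child and is $\rho$-independent.

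For $v$ in a non-trivial root component $T \in \mathcal{R}$, \Cref{prop:M-forest} says $T$ is an undirected tree of tree nodes, and in $G$ its edges are oriented away from $\rho(T)$; so the number of undirected neighbors of $v$ that become \emph{children} of $v$ in $G$ equals $\undegree(v,N)$ if $v = \rho(T)$ and $\undegree(v,N)-1$ otherwise, and each such child is automatically a tree node. Consequently $v$ fails the tree-child condition in $G$ precisely when $v$ is a non-leaf in $G$, has no tree child in $N$, and contributes no undirected child in $G$. Since $\undegree(v,N) \geq 1$ here, a short case check shows the last requirement forces $\undegree(v,N)=1$ and $v \neq \rho(T)$; as any edge from $v$ to $V_{DP}(N)$ is directed out of $v$, non-leafness together with the absence of a tree child in $N$ is then equivalent to $v \in W_1(T)$. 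The failure condition thus collapses to $v \in W_1(T) \setminus \{\rho(T)\}$.

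Combining, $G = N^+_\rho$ is tree-child iff (a) every non-leaf $v \in V_{DP}(N) \cup W_0$ has a tree child in $N$, and (b) $W_1(T) \subseteq \{\rho(T)\}$ for every $T \in \mathcal{R}$. Condition (a) is $\rho$-independent; condition (b) is satisfiable by some $\rho$ iff $|W_1(T)| \leq 1$ for each $T$ (set $\rho(T)$ to the unique element of $W_1(T)$ when it exists, and to any node otherwise), and is satisfied by every $\rho$ iff $W_1(T) = \emptyset$ for each $T$ (since $\rho(T)$ ranges over all of $V(T)$, an adversarial choice excludes any fixed element). This yields the weak and strong tree-child characterizations respectively. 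I expect the main obstacle to be the case analysis on $V_R(N) \setminus W_0$, especially verifying that the failure condition collapses cleanly to $W_1(T) \setminus \{\rho(T)\}$ and that the subcase $\rho(T) \in W_1(T)$ causes no issue because the undirected neighbor of $\rho(T)$ in $T$ provides a tree child in $G$.
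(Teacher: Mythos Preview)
Your proposal is correct and follows essentially the same approach as the paper: both first establish the key characterization that $N^+_\rho$ is tree-child if and only if every non-leaf node in $V_{DP}(N)\cup W_0$ has a tree child in $N$ and $W_1(T)\subseteq\{\rho(T)\}$ for each $T\in\mathcal{R}$, via the same case analysis on nodes (directed part / trivial root components versus non-trivial root components, with the subcases $\undegree\geq 2$, $\undegree=1$ not adjacent to $V_{DP}$, and $\undegree=1$ adjacent to $V_{DP}$). The only cosmetic difference is that you fold condition (a) into the characterization while the paper assumes (a) first and states the claim in terms of (b) alone; the existential/universal reduction over $\rho$ is then identical.
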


\noindent
If $N$ is a DAG, then
$V_{DP}(N) \cup W_0$ is the full node set
and we simply recover the tree-child definition.
Recall that children are defined using directed edges only.
For example, take $N$ in Fig.~\ref{fig:rootedpartners}.
In $\mathcal{C}(N)$ the edges incident to $b$ and $c_1$ are directed;
$e_i$ remains undirected and forms a root component $T_i$ ($i=1,2$).
$W_1(T_1)=\emptyset$
but $W_1(T_2)=\{u\}$, because $u$ is incident to exactly 1 undirected edge
and 2 outgoing hybrid edges.
By \Cref{prop:tree-child-iff}, $N$ is weakly tree-child.
Indeed, its partners rooted at $u$ are tree-child
(e.g. Fig.~\ref{fig:rootedpartners} bottom left)
but its partners rooted at $v$ are not (e.g.\ bottom right).
The proof below shows that, more generally, a weakly tree-child
network has at most one `problematic' node in each root component, and this
node must serve as root for a rooted partner to be tree-child.

\begin{proof}[Proof of \Cref{prop:tree-child-iff}]
  We first characterize which rooted partners are tree-child.
  Suppose that $N$ is complete, and that
  every non-leaf node in $V_{DP}(N) \cup W_0$ has a tree child in $N$.
  For a root choice function $\rho$,
  we claim that the rooted partner $N^+_\rho$ is tree-child if and only if
  $W_1(T) \subseteq \{\rho(T)\}$ for every $T \in \mathcal{R}$.
  To prove this claim, consider a node $u$ in $N$.
  If $u$ is in $V_{DP}(N) \cup W_0$ then
  $u$ has the same children in $N$ as in any rooted partner, so
  its tree child in $N$ is also its tree child in $N^+_\rho$.
  Otherwise, $u$ is in some $T\in\mathcal{R}$,
  $T$ is non-trivial and $\undegree(u) \geq 1$.
  If $\undegree(u) \geq 2$, then at least one of its neighbor in $T$ is
  its tree child in $N^+_\rho$.
  If $\undegree(u) = 1$ and is not adjacent to $V_{DP}(N)$, then
  $\outdegree(u)=0$ and $u$ is a leaf in $N^+_\rho$ or its unique neighbor
  is its tree child in $N^+_\rho$.
  If $u$ has a tree child $w$ in $N$, then $w$ is also its tree child in $N^+_\rho$.
  Otherwise, $u\in W_1(T)$.
  Let $v$ be its unique neighbor in $T$.
  If $\rho(T)\neq u$ then $v$ is the parent of $u$ in $N^+_\rho$,
  $u$ has the same children in $N^+_\rho$ as in $N$,
  so $u$ has no tree child in $N^+_\rho$ (by definition of $W_1(T)$).
  If $\rho(T)=u$ then $v$ is a child of $u$ in $N^+_\rho$, and since $v$
  is a tree node (because $v\in T$) $u$ has a tree child in $N^+_\rho$.
  Overall, we get that $N^+_\rho$ is tree-child if and only if
  any node in any $W_1(T)$ is a root,
  which proves the claim.

  For the first direction of \Cref{prop:tree-child-iff},
  suppose that $N$ is complete and weakly tree-child.
  If $u$ is a non-leaf node in $V_{DP}(N) \cup W_0$,
  then $u$ has the same children in $N$ as in any rooted partner,
  so $u$ has a tree child in $N$. Also, we can apply our claim.
  Since $N^+_\rho$ is tree-child for some $\rho$, by our claim we get
  $W_1(T) \subseteq \{\rho(T)\}$ which implies that $|W_1(T)| \leq 1$ for
  each $T \in \mathcal{R}$.
  Suppose further that $N$ is strongly tree-child.
  If $W_1(T) \neq \emptyset$ for some $T \in \mathcal{R}$ then $T$ is nontrivial,
  we can choose $\rho(T)=v \notin W_1(T)$ for which $N^+_\rho$ is not tree-child,
  a contradiction. This proves the first direction.

  For the second direction, assume that every non-leaf node
  in $V_{DP}(N) \cup W_0$ has a tree child in $N$ and
  $|W_1(T)|\leq 1$ (resp. $W_1(T)=\emptyset$) for every $T\in \mathcal{R}$.
  Then $N$ admits at least one tree-child rooted partner 
  (setting $\rho$ such that $\{\rho(T)\} = W_1(T)$ for any $T$ with
  $W_1(T) \neq \emptyset$) and $N$ is weakly tree-child.
  Further, if $W_1(T)=\emptyset$ for all $T$,
  then $N^+_\rho$ is tree-child for any root choice function $\rho$,
  and $N$ is strongly tree-child.
\end{proof}

Finally, we characterize unambiguous leaves quickly.

\begin{lem}\label{lem:ULminusRL}
  In a network $N$, $v$ is an ambiguous leaf
  if and only
  if $v \in V_R(N)$, $\undegree(v) = 1$, and $\outdegree(v) = \indegree(v) = 0$.
\end{lem}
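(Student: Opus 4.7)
The plan is to prove both directions by analyzing how edges incident to $v$ can be oriented across rooted partners, using \Cref{prop:network-rooting} (bijection between rooted partners and root choice functions), \Cref{prop:fixeddirection-in-DP} (edges in $E_{DP}(N)$ are fixed across rooted partners), and \Cref{prop:M-forest} (root components are undirected trees whose nodes all have in-degree $0$ in $N$).

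For the forward direction, suppose $v \in V_{UL}(N) \setminus V_{RL}(N)$, so there exist rooted partners $G_1$ and $G_2$ with $v$ a leaf in $G_1$ but not in $G_2$. Directed edges of $N$ retain their direction in every rooted partner, so $v$ being a leaf in $G_1$ forces $\outdegree(v, N) = 0$. Since $\outdegree(v, G_1) = 0 < \outdegree(v, G_2)$, at least one edge incident to $v$ is oriented differently in $G_1$ and $G_2$; by \Cref{prop:fixeddirection-in-DP} it must lie in $E_R(N)$, so $v \in V_R(N)$ and $\undegree(v, N) \geq 1$. Then $\indegree(v, N) = 0$ follows from the second part of \Cref{prop:M-forest}. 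To obtain $\undegree(v, N) = 1$, I would observe via \Cref{prop:network-rooting} that the root component $T$ containing $v$ is oriented in $G_1$ as an undirected tree rooted at some $\rho(T) \in V(T)$; for every undirected edge at $v$ to point into $v$ in $G_1$ (forced by $v$ being a leaf with $\outdegree(v, N) = 0$), $v$ must be a leaf of $T$, so $\undegree(v, N) = 1$.

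For the backward direction, let $T$ be the root component of $v$ and $u$ its unique undirected neighbor; $T$ is non-trivial since $\undegree(v, N) = 1$. I would invoke \Cref{prop:network-rooting} to construct two root choice functions $\rho$ and $\rho'$ agreeing outside $T$, with $\rho(T) = u$ and $\rho'(T) = v$. In $N^+_\rho$, the edge $uv$ becomes $(u, v)$, making $v$ a leaf, so $v \in V_{UL}(N)$; in $N^+_{\rho'}$, it becomes $(v, u)$, giving $v$ positive out-degree, so $v \notin V_{RL}(N)$. Hence $v$ is ambiguous.

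The main delicate point is the forward argument for $\undegree(v, N) = 1$: if $v$ had two or more undirected neighbors in $T$, then in any rooted partner at least one such edge would be oriented away from $v$ (if $v = \rho(T)$, all of them leave $v$; if $v \neq \rho(T)$, $v$ has at least one child in $T$), so $v$ could never be a leaf in any rooted partner, contradicting $v \in V_{UL}(N)$.
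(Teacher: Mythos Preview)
Your proposal is correct and follows essentially the same approach as the paper's proof. Both directions use \Cref{prop:network-rooting} to exhibit the two relevant rooted partners and \Cref{prop:fixeddirection-in-DP} to force $v$ into a root component; your argument is somewhat more explicit (e.g., isolating an edge oriented differently in $G_1$ and $G_2$, and invoking \Cref{prop:M-forest} for $\indegree(v)=0$), but the structure and key ideas are the same.
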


\begin{proof}
  Let $v$ be an ambiguous leaf. Then $\outdegree(v) = 0$.
  By~\Cref{prop:fixeddirection-in-DP}, $v$ is in $V_R(N)$
  so $\indegree(v) = 0$.
  Finally, if $\undegree(v) = 0$, then
  $v$ is isolated, and a rooted leaf.
  If $\undegree(v) \geq 2$, then in any rooted
  partner one of the incident edges is directed away from $v$, and $v$ is never
  a leaf.  Therefore $\undegree(v) = 1$.

  Conversely, let $v \in V_R(N)$
  be incident to exactly one edge, $uv$.
  By \Cref{prop:network-rooting}, we can find a rooted partner where $v$
  is a non-leaf root as well as a rooted partner where $u$ is a root and $v$ is
  a leaf.  Hence $v$ is an ambiguous leaf.
\end{proof}

\begin{remark}
We argue that requiring $V_{UL}(N) = V_{RL}(N)$
for $N$ to be an $\mathcal{N}$-network is reasonable in practice.
By \Cref{lem:ULminusRL}, an ambiguous leaf $x$ is
in a root component and incident to only one undirected edge.
The ambiguity is whether $x$ becomes a leaf or a root in
a rooted partner.
In practice, one knows which nodes are supposed to be
leaves, with a label and data
\citep{2004Felsenstein,2022Neureiter_contacTrees}.
Then one can, for each root component, either direct all
incident edges to ambiguous leaves towards them, making them rooted leaves
(as in Fig.~\ref{fig:phylocompatible}, compare $N_1$ and $N_3$), or pick one
ambiguous leaf to serve as root for that component
(as in $N_2$, Fig.~\ref{fig:phylocompatible})
and direct edges accordingly,
turning the remaining ambiguous leaves into rooted leaves.
This yields a network with $V_{UL} = V_{RL}$.
By \Cref{prop:M-forest}, this can be done in $\mathcal{O}(|E|)$ where
  $|E|$ is the number of edges.
\end{remark}

\section{Vectors and Representations}
\label{sec:mu-rep}

Formally a multiset is a tuple $(A, m)$, where $A$ is a set and
$m$: $A \to \mathbb{Z}^+$ gives the multiplicity.
We consider two multisets $(A, m_A)$ and $(B, m_B)$ to be the same if $m_A|_{A \setminus B} = 0$,
$m_B|_{B \setminus A} = 0$, and $m_A|_{A \cap B} = m_B|_{A \cap B}$.
To simplify notations, we
use $\lbag \rbag$ to denote a multiset by
enumerating each element as many times as its multiplicity.
For example, $A = \lbag a, a, b \rbag$ contains $a$ with multiplicity 2 and $b$
with multiplicity 1.
For brevity, we identify a multiset $(A, m)$ with the set $A$ if $m\equiv 1$,
e.g. $\lbag a, b, c \rbag = \{a, b, c\}$.
We adopt the standard notion of sum and difference for multisets.
The symmetric difference between multisets
is defined as $A \triangle B = (A - B) + (B - A)$.

In what follows, we consider a vector of distinct labels $\mathcal{L}$,
whose order is arbitrary but fixed, as it will determine the order of
coordinates in all $\mu$ vectors and representations.
For an $\mathcal{L}$-DAG $G$ and for node $v\in V(G)$,
the $\mu$-vector of $v$ is defined as the tuple
$\mu(v,G) = (\mu_1(v),\ldots,\mu_n(v))$
where $n$ is the number of labels in $\mathcal{L}$ and
$\mu_i(v)$ is the number of paths in $G$ from $v$ to
the leaf with the $i^\mathrm{th}$ label in $\mathcal{L}$.
As in \crv, the partial order $\geq$ between $\mu$-vectors is the coordinatewise
order. Namely, for $m=(m_1, \ldots, m_n)$ and $m'=(m_1', \ldots, m_n')$,
$m \geq m'$ if $m_i \geq m_i'$ for
all $i = 1, \ldots, n$.
If $m\not\leq m'$ and $m\not\geq m'$ then $m$ and $m'$ are
\emph{incomparable}.
The node-based $\mu$-representation of $G$ from \crv, denoted as $\muv(G)$, is
defined as the multiset $\lbag \mu(v, G): v\in V(G) \rbag$.
Algorithm~1 in \crv\ computes $\muv(G)$ recursively in post-order thanks to
the following property, which is a slight extension of Lemma~4 in
\crv\ allowing for parallel edges
by summing over child edges instead of child nodes.

\begin{lem}
  \label{lem:c9-4}
  Let $G$ be a DAG and $u$ a node in $G$. Then
  $$ \mu(u,G) = \sum_{v \in \mathrm{child}(u,G)}\;\;
     \sum_{(u,v) \in E(G)} \mu(v,G) \;.$$
\end{lem}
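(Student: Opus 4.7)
The plan is to prove the identity coordinate by coordinate: fix an index $i$ corresponding to the $i^{\mathrm{th}}$ leaf label and reduce the claim to
\[
\mu_i(u, G) \;=\; \sum_{v \in \mathrm{child}(u,G)} \; \sum_{(u,v) \in E(G)} \mu_i(v, G).
\]
Establishing this scalar identity for every $i$ and stacking the results yields the vector equality.

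For a fixed $i$ with corresponding leaf $\ell$, I would prove the scalar identity by a bijective path-counting argument. I set up a bijection $\Phi$ between directed paths $u \leadsto \ell$ in $G$ of length at least one and pairs $\bigl((u,v),\,Q\bigr)$ consisting of an outgoing edge $(u,v) \in E(G)$ and a directed path $Q\colon v \leadsto \ell$. Writing a path $u = u_0, e_1, u_1, e_2, \ldots, e_k, u_k = \ell$ with $k \geq 1$ as an alternating sequence of nodes and edges (so that parallel edges are treated as distinguishable objects), I send it to $\bigl(e_1,\,(u_1, e_2, u_2, \ldots, u_k)\bigr)$; the inverse prepends the chosen outgoing edge to the chosen tail path. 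Counting both sides, the cardinality on the right equals $\sum_{v}\sum_{(u,v)\in E(G)} \mu_i(v,G)$ by definition, and the cardinality on the left equals $\mu_i(u,G)$ whenever $u$ is internal so that every path $u \leadsto \ell$ has positive length.

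The conceptual point to emphasize, which is precisely the ``slight extension'' of \crv's Lemma~4, is that the inner summation ranges over edges $(u,v) \in E(G)$ rather than over distinct children: when several parallel edges connect $u$ to the same child $v$, each contributes an independent copy of $\mu_i(v,G)$, because each is the first edge of a genuinely distinct directed path. I do not anticipate any real obstacle; the only care required is to treat elements of $E(G)$ as distinguishable even when they share endpoints, which is handled cleanly by viewing a path as an alternating sequence of nodes and edges. The lemma is intended for internal nodes $u$, so the degenerate case of a length-zero ``path'' from a leaf $u=\ell$ to itself does not need to be reconciled in the recursion.
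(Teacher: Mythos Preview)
Your argument is correct and is exactly the natural proof: decompose coordinate by coordinate and set up the obvious bijection between paths $u\leadsto\ell$ of positive length and pairs (outgoing edge of $u$, path from the edge's child to $\ell$), taking care to treat parallel edges as distinguishable. The paper does not actually prove this lemma; it simply records it as a slight extension of Lemma~4 in \crv, the only change being that the inner sum ranges over parallel edges rather than distinct children. Your remark that the recursion is meant for internal nodes (the leaf case being the base case $\mu(\ell)=e_i$, for which the empty right-hand sum does not match) is accurate and worth stating explicitly, since the lemma as written in the paper quantifies over all nodes.
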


We will make frequent use of the following result, whose original proof easily
extends to DAGs with parallel edges thanks to \Cref{lem:c9-4}.
It is an extension of Lemma~5 of \crv\ stating the assumption
used in the proof by \crv, which is weaker than requiring a tree-child DAG.
\begin{lem}
  \label{lem:c9-5}
  Let $G$ be
  an $\mathcal{L}$-DAG and
  $u, v$ two nodes in $G$.
  \begin{enumerate}
  \item If there exists a path $u \leadsto v$, then $\mu(u, G) \geq \mu(v, G)$.
  \item If $\mu(u, G) > \mu(v, G)$ and
  if $v$ has a tree descendant leaf,
  then there exists a path $u \leadsto v$.
  \item If $\mu(u, G) = \mu(v, G)$ and
    if $v$ has a tree descendant leaf,
    then $u, v$ are connected by an elementary path.
  \end{enumerate}
\end{lem}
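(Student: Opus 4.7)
The plan is to treat the three claims in order, reusing the key structural observation that makes claim~2 go through in the harder claim~3. Claim~1 is routine: fix a path $u \leadsto v$ in $G$, and for each leaf $\ell$ at position $i$ of $\mathcal{L}$, observe that prepending this fixed path to any path $v \leadsto \ell$ produces a distinct path $u \leadsto \ell$. This yields $\mu_i(v) \leq \mu_i(u)$ for every $i$, hence $\mu(v) \leq \mu(u)$ coordinatewise.

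For claim~2, I would exploit the rigidity of tree paths. Let $\ell$ be a tree descendant leaf of $v$, reached by a tree path $v = w_0 \to w_1 \to \cdots \to w_m = \ell$. Each $w_j$ with $j \geq 1$ is a tree node, so $\indegree(w_j) \leq 1$; together with the existing edge $w_{j-1} \to w_j$, this forces $w_j$'s unique incoming edge to be the tree edge from $w_{j-1}$. Iterating backward from $\ell$ shows that every path in $G$ ending at $\ell$ must, from some index onward, coincide with a tail of this tree path, and in particular pass through $v = w_0$. If no path $u \leadsto v$ existed, no path $u \leadsto \ell$ would exist either, forcing $\mu_\ell(u) = 0$; but $\mu(u) > \mu(v)$ together with $\mu_\ell(v) \geq 1$ (the tree path itself is one such path) forces $\mu_\ell(u) \geq 1$, a contradiction.

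For claim~3 I would upgrade the claim-2 observation into a factorization: every path $u \leadsto \ell$ decomposes uniquely as a path $u \leadsto v$ followed by a path $v \leadsto \ell$, so $\mu_\ell(u) = N \cdot \mu_\ell(v)$ where $N$ counts paths from $u$ to $v$; the hypothesis $\mu_\ell(u) = \mu_\ell(v) \geq 1$ forces $N = 1$, yielding a unique path $p\colon u = u_0 \to u_1 \to \cdots \to u_k = v$. Sandwiching via claim~1 along $u_0 \leadsto u_i \leadsto u_k$ gives $\mu(u_i) = \mu(v)$ for all $i$, and applying \Cref{lem:c9-4} at each $u_i$ with $i < k$, combined with $\mu(w) > 0$ for every node $w$ of a DAG (each node reaches some leaf), forces $u_i$ to have out-degree exactly $1$ in $G$ with unique out-edge $(u_i, u_{i+1})$.

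The main obstacle is the in-degree condition on the intermediate $u_i$: $\mu$-equality controls outgoing structure via \Cref{lem:c9-4} but not incoming structure directly. I would close this gap by propagating a tree descendant leaf down the now-forced out-degree-$1$ chain. Any tree path from $u_0$ is constrained to begin $u_0 \to u_1 \to \cdots$ because each $u_i$ ($i < k$) has only one out-edge, so each successive $u_{i+1}$ along the chain must be a tree node, i.e.\ $\indegree(u_{i+1}) = 1$; combined with the out-degree-$1$ result, this gives the elementary condition on $p$. The delicate point is guaranteeing the existence of such a downward tree path starting at $u$ from the stated hypothesis on $v$ alone; this is presumably what \crv{}'s proof secures from its global tree-child assumption, and the extension promised by the authors should amount to tracing that implication through using only the tree descendant leaf of $v$ together with the forced out-degree-$1$ chain already established.
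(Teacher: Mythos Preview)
The paper gives no self-contained proof of this lemma: it simply refers to Lemma~5 of \crv\ (stated there for tree-child DAGs) and asserts that its proof ``easily extends'' to the present setting with parallel edges and the weaker hypothesis. So there is no detailed argument to compare against; what matters is whether your outline actually goes through.

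Your treatment of parts~1 and~2 is correct and is exactly the standard argument. For part~3, your derivation of the unique path $u=u_0\to\cdots\to u_k=v$ and of $\odegsym(u_i)=1$ for $i<k$ via \Cref{lem:c9-4} is also correct.

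The in-degree gap you flag, however, is not merely delicate---it is genuinely unclosable under the stated hypothesis. Take the $\{\ell\}$-DAG with edges $(u,u_1)$, $(w,u_1)$, $(u_1,v)$, $(v,\ell)$. Then $\mu(u)=\mu(v)=(1)$ and $v$ has the tree descendant leaf $\ell$, yet the only path $u\to u_1\to v$ is not elementary because $\idegsym(u_1)=2$. So part~3 is false as stated; the hope that the argument can be ``traced through'' using only the tree descendant leaf of $v$ fails. What Cardona et al.\ actually use at this step is the tree-child property applied to the $u_i$: since $u_i$ ($i<k$) has out-degree~1 and must have a tree child, its unique child $u_{i+1}$ is a tree node, forcing $\idegsym(u_{i+1})=1$. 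That one line is the missing ingredient, and it needs more than the hypothesis on $v$ alone (assuming that $u$ has a tree descendant leaf would also suffice, since the forced out-degree-1 chain then makes that tree path run through $u_1,\ldots,u_{k-1}$). Every invocation of part~3 in the paper occurs inside a tree-child rooted partner, so the downstream results are unaffected.
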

\noindent
Other results in \crv\ similarly hold when
parallel edges are allowed, such as their Theorem~1 on tree-child networks
(which must be non-binary if they have parallel edges).

The rest of the section is organized as follows. In part~\ref{sec:EBMR} we define
the edge-based $\mu$-representation for $\mathcal{L}$-networks, with
\Cref{alg:edge-mu-rep} to compute it.
Part~\ref{sec:EBMR-treechild} presents properties of this $\mu$-representation
for tree-child networks, and
part~\ref{sec:reconstruct} describes
how to reconstruct a complete tree-child $\mathcal{L}$-network from its
edge-based $\mu$-representation.
The networks in Fig.~\ref{fig:compare2nets} are used as
examples throughout.

\subsection{Edge-based $\mu$-representation}\label{sec:EBMR}

We first extend the notion of $\mu$-vectors to nodes in the directed part of an
$\mathcal{L}$-network.

\begin{prop}\label{prop:directed-part-mu-vec}
  Let $N$ be an $\mathcal{L}$-network, and $v \in V_{DP}(N)$.
  Then the set of directed paths starting at $v$,
  and consequently $\mu(v, G)$, are the same for any rooted partner $G$ of $N$.
\end{prop}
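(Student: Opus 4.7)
The plan is to reduce the statement to \Cref{prop:fixeddirection-in-DP} via \Cref{undirected before directed}. The underlying idea is that a directed path in a rooted partner $G$ starting at $v \in V_{DP}(N)$ can only traverse edges whose orientations are already fixed in $N$, independently of the root choice. First, I would fix $v \in V_{DP}(N)$ and two arbitrary rooted partners $G, G'$ of $N$, and consider any directed path $P\colon v = w_0 \to w_1 \to \cdots \to w_k$ in $G$. For each $i \geq 1$, the prefix of $P$ gives a directed path $v \leadsto w_i$ in $G$, so $w_i < v$ in $G$. I would then apply the contrapositive of \Cref{undirected before directed}: if $w_i$ were in $V_R(N)$, the lemma would force $v \in V_R(N)$, contradicting $v \in V_{DP}(N)$. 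Hence every $w_i \in V_{DP}(N)$.

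Next, I would observe that each edge $(w_{i-1}, w_i)$ of $P$ has both endpoints in $V_{DP}(N)$. Since every root component is an undirected component whose node set lies entirely in $V_R(N)$, no edge with an endpoint in $V_{DP}(N)$ can belong to a root component, so $(w_{i-1}, w_i) \in E_{DP}(N)$. By \Cref{prop:fixeddirection-in-DP}, each such edge carries the same orientation in $G'$ as in $G$, so $P$ is also a directed path in $G'$. Swapping the roles of $G$ and $G'$ yields the equality of the sets of directed paths starting at $v$ across any two rooted partners.

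For the consequence on $\mu(v, G)$, I would invoke the hypothesis that $N$ is an $\mathcal{L}$-network: every rooted partner shares the same leaf set $V_L = V_{UL}(N) = V_{RL}(N)$, with a fixed bijective $\mathcal{L}$-labeling (as noted after \Cref{def:labeled-network}). Consequently, for each label, the number of directed paths from $v$ to the correspondingly labeled leaf is the same in every rooted partner, giving $\mu(v, G) = \mu(v, G')$. The main obstacle is purely conceptual, namely recognizing \Cref{undirected before directed} as the correct tool to confine such paths to the directed part; once that is in place, everything reduces to a short combination of \Cref{prop:fixeddirection-in-DP} and the $\mathcal{L}$-network hypothesis.
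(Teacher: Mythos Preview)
Your proof is correct and follows essentially the same route as the paper: show that every node on a directed path from $v$ lies in $V_{DP}(N)$, deduce that every edge of the path lies in $E_{DP}(N)$, and then invoke \Cref{prop:fixeddirection-in-DP}. The only cosmetic difference is that you confine the path to $V_{DP}(N)$ via \Cref{undirected before directed}, whereas the paper argues the same point one level lower using \Cref{prop:same-component}.
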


Using the proposition above, the following is well-defined.
\begin{defn}[\textbf{$\mu$-vector of a node in the directed part}]
  \label{def:directed-part-mu-vec}
  Let $N$ be an $\mathcal{L}$-network and $G$ any rooted partner of $N$.
  For $v \in V_{DP}(N)$, we define $\mu(v, N) = \mu(v, G)$.
\end{defn}

In Fig.~\ref{fig:compare2nets} for example, $e_5$ is in the
directed part of $N$.
Applying \Cref{lem:c9-4} recursively on $h_1$, $h_2$ and
their parent in ${\mathcal C}(N)$,
we get $\mu(e_5,N) = (0{,}0{,}0{,}0{,}0, 1{,}1{,}0)$
with leaf order given by $\mathcal{L}=(a_1,a_2,b,c,d,h_1,h_2,h_3)$.
All 3 hybrid edges have the
same $\mu$-vector in $N$: $(0{,}0{,}0{,}0{,}0, 1{,}1{,}1)$.

\begin{proof}[Proof of \Cref{prop:directed-part-mu-vec}]
  Let $G$ be a rooted partner of $N$.  Let $u_1\ldots u_n$ ($n \geq 1$), where
  $u_1 = v$, be a directed path starting at $v$ in $G$.  We claim
  $u_i, i = 1, \ldots, n$ are all in $V_{DP}(N)$: Otherwise, we can find $i$ such
  that $u_i \in V_{DP}(N)$ and $u_{i+1} \in V_R(N)$.  Since
  $(u_i, u_{i+1}) \in E(G)$, either $u_iu_{i+1}$ or $(u_i, u_{i+1})$ is in
  $E(N)$.  By \Cref{prop:same-component}, this implies either $u_i \in V_R(N)$
  or $u_{i+1} \in V_{DP}(N)$, a contradiction.
  Therefore any directed paths from $v$ in $G$ lies entirely in $G[V_{DP}(N)]$.
  The conclusion then follows from \Cref{prop:fixeddirection-in-DP}.
\end{proof}

Now we turn to root components.
Here the $\mu$-vector for a node
is not well-defined as it varies depending on the rooted partner.
It turns out that if locally we
  choose the direction of an edge $uv$, say $(u, v)$, then globally the set of
  directed paths from $v$ across all rooted partners are the same, and
  consequently the $\mu$-vector of $v$ is fixed.
We now formalize this.

\begin{prop}\label{prop:directional-mu-vec}
  Let $N$ be an $\mathcal{L}$-network with $uv \in E_R(N)$.  Then
  the set of directed paths starting at $v$, and consequently $\mu(v, G)$,
  are the same for any rooted partner $G$ of $N$ where $uv$
  is directed as $(u, v)$, and there always exists such a rooted partner. 
\end{prop}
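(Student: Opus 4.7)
The plan is to prove existence first, then uniqueness. For \textbf{existence}, let $T$ denote the root component of $N$ that contains $uv$; by \Cref{prop:M-forest}, $T$ is an undirected tree. I define a root choice function $\rho$ by setting $\rho(T) = u$ and making arbitrary choices on the other root components. By \Cref{prop:network-rooting}, $N^+_\rho$ is a rooted partner of $N$, and by construction every edge of $T$ is directed away from $u$; in particular $uv$ is directed as $(u,v)$.

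For \textbf{uniqueness}, let $G_1, G_2$ be rooted partners of $N$ in which $uv$ is directed as $(u,v)$, and write $T_v$ and $T_u$ for the connected components of $T \setminus \{uv\}$ containing $v$ and $u$ respectively. In each $G_i$, the unique root of $G_i$ that lies in $T$ (which exists by \Cref{prop:network-rooting}) must belong to $T_u$; otherwise directing edges of $T$ away from that root would orient $uv$ as $(v,u)$. Consequently every edge of $T_v$ is directed along the unique $T_v$-path emanating from $v$, with the same orientation in $G_1$ and $G_2$. Now take any directed path $P$ starting at $v$ in $G_i$. While $P$ stays in $T$, it remains in $T_v$ (it cannot cross $uv$, which is directed $(u,v)$), so this initial segment uses only edges of $T_v$ directed away from $v$ and agrees across $G_1$ and $G_2$. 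If $P$ exits $T$ via an edge $(w, x)$ with $w \in T_v$ and $x \notin T$, then $(w, x) \in E_D(N)$, because every undirected edge of $N$ incident to $w$ lies in $T$ by \Cref{prop:M-forest}. Moreover $x \in V_{DP}(N)$: otherwise $x$ would lie in some other root component $T'$, the relation $x \lesssim w$ induced by $(w,x)$ would force $T' \lesssim T$ and, by maximality of $T'$, $T' = T$, contradicting $x \notin T$. By the contrapositive of \Cref{undirected before directed}, every subsequent node of $P$ lies in $V_{DP}(N)$, and by \Cref{prop:fixeddirection-in-DP} every subsequent edge of $P$ has the same direction in $G_1$ and $G_2$. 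Thus $P$ is a directed path from $v$ in both partners, the set of directed paths from $v$ coincides in $G_1$ and $G_2$, and $\mu(v,G)$ is independent of the chosen partner.

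The \textbf{main obstacle} is translating the local constraint that $uv$ is directed as $(u,v)$ into a globally identical orientation of the root component $T$ across all compatible rooted partners. I isolate this by observing that the constraint forces the root of $T$ into $T_u$, after which \Cref{prop:M-forest} pins down the orientation of every edge of $T_v$. The remainder---showing that a path exiting $T$ must enter $V_{DP}(N)$ and then stay there with fixed edge directions---is bookkeeping built from \Cref{undirected before directed} together with \Cref{prop:fixeddirection-in-DP}.
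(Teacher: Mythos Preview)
Your proof is correct, but it takes a different route from the paper. The paper's argument is a one-line reduction: form the auxiliary network $N'$ obtained from $N$ by directing $uv$ as $(u,v)$, invoke \Cref{lem:direct-one-edge} to see that $N'$ is a network phylogenetically compatible with $N$, observe that $v$ now has positive in-degree in $N'$ and hence lies in $V_{DP}(N')$, and then apply \Cref{prop:directed-part-mu-vec} directly to $N'$. Your approach instead works entirely inside $N$: you split the root component $T$ along $uv$, pin the root into $T_u$, and then trace an arbitrary path from $v$ through a $T_v$-segment (where the orientation is forced by the tree structure) followed by a $V_{DP}$-segment (where \Cref{prop:fixeddirection-in-DP} applies). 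The paper's method is shorter and more modular, reusing \Cref{prop:directed-part-mu-vec} wholesale at the cost of needing \Cref{lem:direct-one-edge}; your method is more self-contained and makes the path decomposition explicit, which has some expository value but essentially re-derives the content of \Cref{prop:directed-part-mu-vec} in situ.
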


Using this proposition, we can define the following.

\begin{defn}[\textbf{directional $\mu$-vector}]\label{def:directional-mu-vec}
  Let $N$ be an $\mathcal{L}$-network, $uv \in E_R(N)$, and $G$ any rooted
  partner of $N$ where $uv$ is directed as $(u, v)$.  We call $\mu(v, G)$ the
  \emph{directional $\mu$-vector of $(u, v)$}, and write it as $\mu_d(u, v, N)$,
  or $\mu_d(u, v)$ if $N$ is clear in the context.
\end{defn}

\begin{figure}
\centering \includegraphics{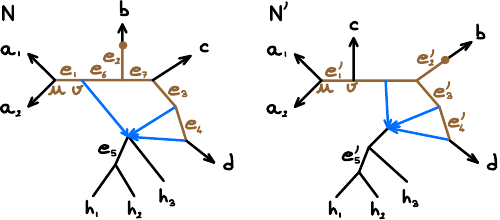}
\caption{Tree-child $\mathcal{L}$-networks on
  $\mathcal{L}=(a_1,a_2,b,c,d,h_1,h_2,h_3)$ with one root component.
  Directed edges are shown with arrows.
  Black: tree edges in the directed part, leading to $A_1$ in \Cref{alg:edge-mu-rep}.
  Blue: hybrid edges, leading to $A_2$ in \Cref{alg:edge-mu-rep}.
  Brown: edges in the root component ($E_R$),
  leading to $A_4$ in \Cref{alg:edge-mu-rep}.
  For $i\leq 5$, edge $e_i$ in $N$ (left) and $e'_i$ in $N'$ (right)
  share the same $\mu$-vector set.
  The multisets $\mue(N)$ and $\mue(N')$ have 17 elements in common:
  8 corresponding to edges incident to leaves,
  5 from edges $e_i$ and $e'_i$ for $i\leq 5$,
  3 from hybrid edges, and 1 root $\mu$-vector set.
  $\mue(N)$ has 2 unique elements
  (from $e_6$ and $e_7$)
  and $\mue(N')$ has 3, so $\dmue(N,N')=5$.
  See the Appendix
  for details.
}\label{fig:compare2nets}
\end{figure}

In Fig.~\ref{fig:compare2nets} for example, $e_1=uv$ is in the
root component of $N$, with directional $\mu$-vectors:
$\mu_d(v,u,N) = (1{,}1{,}0{,}0{,}0, 0{,}0{,}0)$ and
$\mu_d(u,v,N) = (0{,}0{,}1{,}1{,}1, 3{,}3{,}3)$.
In $N'$, $e'_1$ has these same directional $\mu$-vectors.

Before the proof we establish a lemma.

\begin{lem}\label{lem:direct-one-edge}
  Let $N$ be a network with $uv$ an edge in some root component.  Let $N'$ be the
  semidirected graph obtained from $N$ by directing $uv$ as $(u, v)$.  Then $N'$
  is a network phylogenetically compatible with $N$.
\end{lem}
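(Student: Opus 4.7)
My plan is to exhibit a rooted partner $G$ of $N$ in which $uv$ is already directed as $(u,v)$, so that $G$ is simultaneously a rooted partner of $N'$. A single such construction will establish that $N'$ is an SDAG, admits a rooted partner, and is phylogenetically compatible with $N$. The two things I need to verify are (i) that directing $uv$ as $(u,v)$ preserves all edge types, hence $E_H(N') = E_H(N)$, and (ii) that such a rooted partner $G$ exists.

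For (i), I would apply \Cref{prop:M-forest} to the root component $T$ containing $uv$. That proposition says every node of $T$ is a tree node and no node of $T$ has positive in-degree in $N$. In particular $\indegree(v, N) = 0$, so after adding the direction $(u,v)$ we have $\indegree(v, N') = 1$, meaning $v$ remains a tree node in $N'$ and $(u,v)$ is still a tree edge. No other node in-degree or edge incidence changes, so no node or edge changes type, which gives $E_H(N') = E_H(N)$.

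For (ii), I would appeal to \Cref{prop:network-rooting}. Define a root choice function $\rho$ on $N$ by setting $\rho(T) = u$ and choosing arbitrary roots for the remaining root components. Then $G := N^+_{\rho}$ is a rooted partner of $N$, and by construction the edges of $T$ are oriented away from $u$; in particular $uv$ is directed as $(u,v)$ in $G$. Hence every direction appearing in $N'$ agrees with $G$, so $G$ is compatible with $N'$ as well. Any semidirected cycle in $N'$ would extend (by directing its remaining undirected edges as in $G$) to a directed cycle in $G$, contradicting that $G$ is a DAG; therefore $N'$ is acyclic. Combined with $E_H(G) = E_H(N) = E_H(N')$, this makes $G$ a rooted partner of $N'$, so $N'$ is a network phylogenetically compatible with $N$.

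I do not anticipate any real obstacle: \Cref{prop:M-forest} and \Cref{prop:network-rooting} together supply exactly the freedom needed to pick $u$ as the root of its root component, and the rest is routine bookkeeping about node and edge types.
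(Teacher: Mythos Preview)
Your overall strategy is sound and close to the paper's: both arguments exhibit a rooted partner $G$ of $N$ in which $uv$ is directed as $(u,v)$, by taking $u$ as the root of its root component. Your use of \Cref{prop:M-forest} to verify $E_H(N')=E_H(N)$ directly is clean and correct.

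There is one genuine gap, however, in your acyclicity step. You claim that ``any semidirected cycle in $N'$ would extend (by directing its remaining undirected edges as in $G$) to a directed cycle in $G$.'' This inference is invalid: a semidirected graph can admit a compatible DAG and still fail to be acyclic. Concretely, take nodes $a,b,c$ with edges $(a,b)$ directed and $bc$, $ca$ undirected. This graph contains a semidirected cycle (orient $bc$ as $(b,c)$ and $ca$ as $(c,a)$), yet the compatible directed graph with edges $(a,b),(c,b),(c,a)$ is a DAG. The point is that $G$ may orient the undirected edges of a putative cycle the ``wrong'' way, so no contradiction with $G$ being a DAG arises.

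The fix is immediate and does not require $G$ at all: if $N'$ contained a semidirected cycle $C$, replace the edge $(u,v)$ (if present in $C$) by the undirected edge $uv$ to obtain a subgraph of $N$; the same orientation that turns $C$ into a directed cycle still works, so $N$ would contain a semidirected cycle, contradicting that $N$ is an SDAG. With this correction your argument goes through.

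For comparison, the paper avoids this issue by invoking \Cref{prop:network-tree-edge}: it observes that $N'$ is obtained from the rooted partner $G$ by undirecting the tree edges in $E(G)\setminus E(N)$ other than $(u,v)$, and that proposition already packages the acyclicity and phylogenetic-compatibility conclusions. Your route is slightly more elementary (it bypasses \Cref{prop:network-tree-edge}) but requires handling acyclicity by hand, which is where the slip occurred.
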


\begin{proof}
  Let $G$ be a rooted partner of $N$ where $u$ is a root.  Let $A = E(G) \setminus
  E(N)$ be the set of edges that are directed in $G$ but not in $N$.  By
  phylogenetic compatibility, $A$ consists of tree edges only.  $A$ also
  contains $(u, v)$.

  Since from $G$ we get back $N'$ if we undirect all the edges in
  $A \setminus \{(u, v)\}$, by \Cref{prop:network-tree-edge}, $G$ is
  phylogenetically compatible with $N'$.  Then $E_H(N') = E_H(G) = E_H(N)$ and $N'$
  is phylogenetically compatible with $N$.
\end{proof}

\begin{proof}[Proof of \Cref{prop:directional-mu-vec}]
  The existence of a rooted partner where $uv$ is directed as $(u,v)$ follows
  from \Cref{prop:network-rooting}. 

  Let $G_1$ and $G_2$ be rooted partners of $N$ such that 
  $uv$ is directed as $(u,v)$ in both.
  Let $N'$ be the semidirected graph obtained from $N$ by directing $uv$ as $(u,v)$. 
  By Lemma~\ref{lem:direct-one-edge}, $N'$ is a network phylogenetically compatible 
  with $N$. 
  Thus, $G_1$ and $G_2$ are rooted partners of $N'$. 
  Since $\indegree(v, N') \geq 1$, $v \in V_{DP}(N')$ by \Cref{prop:M-forest}.
  The conclusion then follows from \Cref{prop:directed-part-mu-vec}.
\end{proof}

In the next lemma and definition, we associate each root component (rather than
a node or edge) to a $\mu$-vector.

\begin{lem}\label{lem:complementary-mu-vec}
  Let $N$ be an $\mathcal{L}$-network, and $T$ a root component of $N$.
  Then the $\mu$-vector $\mu(\rho(T), N^+_\rho)$ is independent of
  the root choice function $\rho$.  
  Furthermore, if $uv$ is an edge in $T$, this $\mu$-vector is equal to
  $\mu_d(u, v) + \mu_d(v, u)$.
\end{lem}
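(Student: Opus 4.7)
The plan is to prove both parts at once by introducing
$f(w) := \mu(w, N^+_\rho)$ for any root choice function $\rho$ with $\rho(T) = w$,
and showing that (i) $f(w)$ is well-defined (independent of $\rho$'s values outside $T$),
(ii) for any edge $w_1 w_2$ in $T$, $f(w_1) = \mu_d(w_1, w_2) + \mu_d(w_2, w_1)$,
and (iii) $f$ is constant on $V(T)$. Both assertions of the lemma drop out of (iii) and (ii).

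First, to establish (i), I apply \Cref{lem:c9-4} at $w$ inside $N^+_\rho$. Since $w = \rho(T)$ is a root of $N^+_\rho$, its out-neighbors split into its $E_D$-children $x$ in $N$ (which lie in $V_{DP}(N)$, so $\mu(x, N^+_\rho) = \mu(x, N)$ by \Cref{def:directed-part-mu-vec}) and its $T$-neighbors $w'$ (for which $w w'$ is directed as $(w, w')$ in $N^+_\rho$, so $\mu(w', N^+_\rho) = \mu_d(w, w')$ by \Cref{prop:directional-mu-vec}). Summing yields an expression for $f(w)$ whose terms depend only on $w$ and on $N$, not on how $\rho$ roots other components.

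Next, for a fixed $T$-edge $w_1 w_2$, I apply the same computation to $w_1$ inside a rooted partner where $w_1 w_2$ is directed as $(w_2, w_1)$; such a partner exists by \Cref{prop:network-rooting}. In this partner, the root of $T$ sits on the $w_2$-side of $T \setminus \{w_1 w_2\}$, forcing every other $T$-edge at $w_1$ to be directed away from $w_1$. The children of $w_1$ are then its $E_D$-children $x$ together with its $T$-neighbors $w' \neq w_2$, yielding
$$\mu_d(w_2, w_1) = \sum_{(w_1, x) \in E_D(N)} \mu(x, N) + \sum_{w_1 w' \in E_R(N),\, w' \neq w_2} \mu_d(w_1, w').$$
Comparing with the formula for $f(w_1)$ from step (i), the only missing term on the right is the contribution of the neighbor $w_2$, giving $f(w_1) = \mu_d(w_1, w_2) + \mu_d(w_2, w_1)$. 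This expression is symmetric in the endpoints, so $f(w_1) = f(w_2)$ whenever $w_1 w_2$ is an edge of $T$; by connectedness of $T$, $f$ is constant on $V(T)$, proving the first assertion, and the ``furthermore'' part follows by evaluating the identity at $u$.

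The only subtle point is verifying in step~(ii) that in a partner with $w_1 w_2$ directed as $(w_2, w_1)$, every other $T$-edge at $w_1$ points outward from $w_1$; this reduces to arguing that the root of $T$ must lie on the $w_2$-side of $w_1 w_2$ in any such partner, since $w_1$ already has an incoming $T$-edge from $w_2$. Everything else is standard bookkeeping with \Cref{lem:c9-4}, the results from \Cref{sec:SDAGproperties} on undirected components, and the matching of terms between the $f(w_1)$ and $\mu_d(w_2, w_1)$ sums.
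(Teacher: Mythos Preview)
Your proof is correct and takes a genuinely different route from the paper's. The paper argues by constructing, for any two nodes $u,v\in V(T)$ and any leaf $x$, an explicit bijection between directed $u$-to-$x$ paths in a partner rooted at $u$ and directed $v$-to-$x$ paths in a partner rooted at $v$: each path is rerouted by replacing its initial segment inside $T$ (the unique tree path to the point where the path exits $T$) with the corresponding segment from the other root, while the tail of the path---lying in the directed part---is fixed by \Cref{prop:fixeddirection-in-DP}. The decomposition $\mu_r(T)=\mu_d(u,v)+\mu_d(v,u)$ is then obtained by a second path-set partition, according to whether a path from $u$ passes through $v$.

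Your argument instead works locally and algebraically via \Cref{lem:c9-4}: you expand both $f(w_1)$ and $\mu_d(w_2,w_1)$ over immediate children, observe that the two sums differ by exactly the single term $\mu_d(w_1,w_2)$, and then propagate constancy along the edges of $T$ by symmetry and connectedness. This is tidy in that it yields both assertions of the lemma simultaneously and reuses \Cref{prop:directed-part-mu-vec} and \Cref{prop:directional-mu-vec} as black boxes rather than re-deriving their content at the level of paths. The paper's bijective argument, by contrast, is more self-contained and gives a direct combinatorial picture of why the root $\mu$-vector is root-independent. One small point worth making explicit in your write-up: in step~(ii) you invoke \Cref{lem:c9-4} at $w_1$, which tacitly requires $w_1$ to be a non-leaf in the chosen partner; this holds because $N$ is an $\mathcal{L}$-network, so $w_1$ (adjacent to an undirected edge in a root component) cannot be an ambiguous leaf and hence has total degree at least~$2$. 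Likewise, the degenerate case where $T$ is a single isolated node should be dispatched separately, as the paper does.
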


With this lemma we define the following:

\begin{defn}[\textbf{root $\mu$-vector}]\label{def:complementary-mu-vec}
  Let $N$ be an $\mathcal{L}$-network and $T$ a root component of $N$.
  The \emph{root $\mu$-vector} of $T$ is defined as $\mu(\rho(T), N^+_\rho)$ where $\rho$
  is any root choice function of $N$.
  We write it as $\mu_r(T, N)$ or
  $\mu_r(T)$ if $N$ is clear from context.
\end{defn}

In Fig.~\ref{fig:compare2nets}, $N$ has one root component $T$
(in brown), with $\mu_r(T,N) = (1{,}1{,}1{,}1{,}1, 3{,}3{,}3)$.
The root component of $N'$ has the same root $\mu$-vector.

\begin{proof}[Proof of \Cref{lem:complementary-mu-vec}]
  The claims obviously hold when $T$ is trivial.
  Now consider $T$ non-trivial and distinct nodes $u \neq v$ in $T$.
  Let $G_{u}$
  (resp.\ $G_{v}$) be a rooted partner of $N$ where $u$ (resp.\ $v$) is a root.
  To prove the first claim, we show that $\mu(u, G_{u}) = \mu(v, G_{v})$ by
  constructing a bijection $f_u$ between $\mathcal{P}_{u}$ and
  $\mathcal{P}_{v}$, where $\mathcal{P}_{z}$ ($z=u,v$) is
  the set of directed paths from $z$ to $x$ in $G_{z}$,
  for an arbitrary but fixed leaf $x$ of $N$.
  Suppose $p_u = u \ldots w \ldots x \in \mathcal{P}_{u}$,
  where $w$ is the last node such that $u \ldots w$ lies in $T$.
  We can modify $p_u$ to a new path $p_v$ by changing the $u \ldots w$ subpath
  to $v \ldots w$, the unique tree path between $v$ and $w$ in $T$.
  By \Cref{undirected before directed},
  the subpath $w \ldots x$ only contain edges in $E_{DP}(N)$.
  Then by \Cref{prop:fixeddirection-in-DP},
  $p_v \in \mathcal{P}_{v}$.
  Obviously, $f_u$ is a bijection whose inverse is
  the map from $\mathcal{P}_{v}$ to $\mathcal{P}_{u}$ constructed by symmetry,
  proving the first claim.
  
  For the second claim, let $u v$ be an edge in $T$ and
  $x$, $G_u$, $G_v$, $\mathcal{P}_u$ and $\mathcal{P}_v$ as before.
  Let $\mathcal{P}_{u}^{v}$ be the set of directed paths from $v$ to $x$ in $G_u$,
  such that $|\mathcal{P}_{u}^{v}|$
  is the coordinate value for $x$ in $\mu_d(u, v)$.
  Define $\mathcal{P}_{v}^{u}$ similarly.
  We can partition $\mathcal{P}_u = A \sqcup B$
  where paths in $A$ contain $v$, and paths in $B$ do not.
  It is easily verified that $B = \mathcal{P}_{v}^{u}$, and that
  prepending $u$ to a path gives a bijection $\mathcal{P}_{u}^{v} \to A$.
  Since $x$ is arbitrary,
  we get $\mu_r(T) = \mu_d(u, v) + \mu_d(v, u)$.
\end{proof}

We are now ready to define the edge-based $\mu$-representation of
an $\mathcal{L}$-network, and an algorithm to compute it.

\begin{defn}[\textbf{edge-based $\mu$-representation}]\label{def:edge-based-mu-rep}
  Let $N$ be a complete $\mathcal{L}$-network. To edge $e$ of $N$
  we associate a set $\mu(e)$, called the \emph{edge $\mu$-vector set}
  of the edge $e$, as follows:
  \begin{itemize}
  \item For $e = (u, v)$, by \Cref{lem:directed-part-edge-and-node}
  we have $v \in V_{DP}$, and we define
  $\mu(e) = \{(\mu(v, N), t)\}$
  using \Cref{def:directed-part-mu-vec}, where
  $t$ is a tag taking value
  $\treetag$ if $e$ is a tree edge, and $\hybtag$ otherwise.
  \item For $e = uv \in E_R$, using \Cref{def:directional-mu-vec} we define
  $\mu(e) = \{(\mu_d(u, v), \treetag), (\mu_d(v, u), \treetag)\}$.
  \end{itemize}
  Let $\mathcal{R}$ be the set of root components of $N$, then the
  \emph{edge-based $\mu$-representation} of $N$, denoted by
  $\mue(N)$, is defined as the multiset
  $$\lbag \mu(e) : e \in E(N) \rbag + \lbag \{(\mu_r(T), \roottag)\}: T \in \mathcal{R}
  \rbag$$
  with $\mu_r$ from \Cref{def:complementary-mu-vec} and $\roottag$ a tag value
  indicating a root $\mu$-vector.
  For an $\mathcal{L}$-network $N'$,
  $\mue(N')$
  is defined as $\mue(\mathcal{C}(N'))$.
\end{defn}

\noindent
In Fig.~\ref{fig:compare2nets} for example,
$\mue(N)$ contains 19 $\mu$-vector sets:
9 in $A_1$, 3 in $A_2$, 6 in $A_3$ and 1 in $A_4$,
using notations as in \Cref{alg:edge-mu-rep}.
$A_1$ contains the unidirectional $\mu$-vector sets from the 8 edges incident
to the leaves, such as
$\{((0{,}0{,}0{,}0{,}0, 1{,}0{,}0), \treetag)\}$ for the edge 
to $h_1$,
and
$\{((0{,}0{,}0{,}0{,}0, 1{,}1{,}0), \treetag)\}$ from $e_5$.
$A_2$ is from the hybrid edges:
$\{((0{,}0{,}0{,}0{,}0, 1{,}1{,}1), \hybtag)\}$ with multiplicity 3.
$A_3$ has only 1 element:
$\{((1{,}1{,}1{,}1{,}1, 3{,}3{,}3), \roottag)\}$.
Finally,
$A_4$ contains the bidirectional $\mu$-vector sets from the 6 edges in the
root component(s). For example, $e_1$ contributes
$\{((1{,}1{,}0{,}0{,}0, 0{,}0{,}0), \treetag), ((0{,}0{,}1{,}1{,}1, 3{,}3{,}3), \treetag)\}$
See the Appendix
for the other 5.

\begin{figure}
  \centering \includegraphics{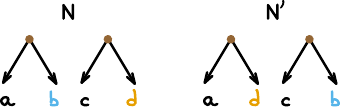}
  \caption{Networks illustrating the need to include the $\mu$-vector
  of trivial root components in \Cref{def:edge-based-mu-rep},
  to distinguish networks with multiple roots.
  $N$ and $N'$ are tree-child and non-isomorphic.
  They have the same multiset of edge $\mu$-vector sets, but different
  root $\mu$-vectors so $\mue(N)\neq \mue(N')$.}
  \label{fig:trivialroot}
\end{figure}

\Cref{lem:complementary-mu-vec} together with \Cref{prop:M-forest} yields
the following
\Cref{alg:edge-mu-rep} to compute the edge-based $\mu$-representation of
an $\mathcal{L}$-network $N = (V, E)$ with $n$ leaves.
As discussed in \Cref{rem:alg-completion},
line~\ref{construct-RP} in \Cref{alg:edge-mu-rep} takes a single traversal of $N$
and $\mathcal{O}(|E|)$ time.  Computing the node-based $\mu$-representation by Algorithm~1
in \crv\ takes $\mathcal{O}(n|E|)$ time.
The remaining steps iterate over edges and take $\mathcal{O}(n|E|)$ time,
giving an overall complexity of $\mathcal{O}(n|E|)$.

\begin{algorithm}
  \caption{Given $\mathcal{L}$-network $N$,
    compute its edge-based
    $\mu$-representation $A = \mue(N)$}
  \label{alg:edge-mu-rep}
  \begin{algorithmic}[1]
    \STATE \label{construct-RP} compute a rooted partner $G$ of $N$, and store:
    \begin{itemize}
      \item $R$ the set of roots in $G$
      \item $\rho$: $V_R(N) \to R$ the function that maps a node
        in a root component $T$ of $N$ to the root of $T$ in $G$
      \item $E_R^+$ the set of edges in $G$ that corresponds to $E_R(N)$
      \item $E_{DP}^+ = E(G) \setminus E_R^+$
     \end{itemize}
     \STATE compute the node-based $\mu$-representation of $G$, \\let $\mu = \muv(\cdot, G)$
     \STATE $A_1 \gets \lbag \{(\mu(v), \treetag)\} : (u, v) \in E_{DP}^+ \cap E_T(G)
    \rbag$
    \STATE $A_2 \gets \lbag \{(\mu(v), \hybtag)\} : (u, v) \in E_{DP}^+ \cap E_H(G) \rbag$
    \STATE $A_3 \gets \lbag \{(\mu(r), \roottag)\} : r \in R \rbag$
    \STATE $A_4 \gets \lbag \{(\mu(v), \treetag), \bigl(\mu(\rho(v)) - \mu(v), \treetag\bigr) \}: (u, v) \in E_R^+ \rbag$
    \RETURN $A = A_1 + A_2 + A_3 + A_4$
  \end{algorithmic}
\end{algorithm}

Compared to the node-based representation $\muv$, $\mue$ has two features.
Unsurprisingly, each undirected edge (whose direction is not resolved by
completion) is represented as bidirectional using two $\mu$-vectors.
This is similar to the representation of edges in unrooted trees,
as bipartitions of $\mathcal{L}$.
The second feature is the inclusion of a $\mu$-vector for each root component,
which may seem surprising.
For a non-trivial root component $T$,
$\mu_r(T)$ is redundant with information from $\mu(e)$ for any $e$ in $T$,
by \Cref{lem:complementary-mu-vec}.
The purpose of including the root $\mu$-vectors in $\mue(N)$
is to keep information from trivial root components, for networks with
multiple roots. Without this information, $\mue$ cannot
discriminate simple networks with multiple roots when one or more
root component is trivial, as illustrated in Fig.~\ref{fig:trivialroot}.

Networks with a unique and non-trivial root component correspond to
standard phylogenetic rooted networks, with uncertainty about the root location.
For these networks, we could use
edge $\mu$-vectors only: $\mue(N) = \lbag \mu(e) : e \in E(N) \rbag$,
that is, omit $A_3$ in \Cref{alg:edge-mu-rep}.
Indeed, the root $\mu$-vector
of the unique root component $T$ is redundant with
$\mu(e)$ of any edge $e$ in $T$.
For this class of standard networks, then, our results below also hold using the
simplified definition of $\mue$.

\subsection{Properties for tree-child networks}
\label{sec:EBMR-treechild}

We will use the following results to reconstruct
a tree-child network from its edge-based $\mu$-representation.  
First we characterize when and how $\mu$-vectors are comparable.

\begin{prop}\label{incomparable components}
  Let $T_1$ and $T_2$ be distinct nontrivial root components of a
  strongly tree-child $\mathcal{L}$-network $N$.
  Then directional $\mu$-vectors from $T_1$
  and from $T_2$ are incomparable to one another.
\end{prop}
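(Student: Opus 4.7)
The plan is to pick edges $u_1v_1\in T_1$ and $u_2v_2\in T_2$ together with chosen orientations $(u_i,v_i)$, and show that $\alpha = \mu_d(u_1,v_1,N)$ and $\beta = \mu_d(u_2,v_2,N)$ are incomparable. Every directional $\mu$-vector from $T_i$ arises this way from some orientation of some edge of $T_i$, so this suffices.

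The first step is to realize both chosen orientations inside a single rooted partner. Since $T_1$ and $T_2$ are distinct root components, \Cref{prop:network-rooting} lets me choose a root choice function $\rho$ with $\rho(T_1)=u_1$ and $\rho(T_2)=u_2$ and arbitrary choices on the remaining root components. In $G = N^+_\rho$, the undirected tree $T_i$ is rooted at $u_i$, so $u_iv_i$ gets directed as $(u_i,v_i)$, and by \Cref{def:directional-mu-vec} we have $\alpha = \mu(v_1,G)$ and $\beta = \mu(v_2,G)$.

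Next, I would argue by contradiction: suppose $\alpha \geq \beta$ (the opposite case is symmetric). Because $N$ is strongly tree-child, $G$ is a tree-child DAG, so every non-leaf node has a tree child and, by an easy induction, a tree descendant leaf; leaves are tree descendant leaves of themselves. In particular $v_2$ has a tree descendant leaf in $G$. Applying \Cref{lem:c9-5}(2) when $\alpha>\beta$ yields a directed path $v_1\leadsto v_2$ in $G$, and \Cref{lem:c9-5}(3) when $\alpha=\beta$ yields an elementary, hence directed, path between $v_1$ and $v_2$ in $G$.

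The main obstacle, which turns out to be short, is converting that directed path into a contradiction about root components. A directed path from $v_1$ to $v_2$ in $G$ is, read in $N$, a semidirected path giving $v_2 \lesssim v_1$; the opposite direction gives $v_1 \lesssim v_2$. But $v_i\in T_i$ and both $T_1$ and $T_2$ are equivalence classes maximal under $\lesssim$, so either relation forces $v_1\sim v_2$ and hence $T_1=T_2$, contradicting distinctness. The subtlest bookkeeping is ensuring the tree-descendant-leaf hypothesis of \Cref{lem:c9-5} is met uniformly in both parts and regardless of which of $v_1,v_2$ plays the role of ``$v$'' in that lemma; this is handled by the tree-child property of $G$, which supplies a tree descendant leaf at every node.
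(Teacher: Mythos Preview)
Your proof is correct and follows the same overall strategy as the paper: root $N$ at $u_1$ and $u_2$ simultaneously to realize both directional $\mu$-vectors in a single tree-child rooted partner $G$, then apply \Cref{lem:c9-5} to obtain a directed path between $v_1$ and $v_2$. The difference lies only in how the final contradiction is extracted. The paper observes that $(u_2,v_2)$ is a tree edge in $G$ and invokes Lemma~1 of \crv\ (any path into a tree node must contain or be contained in its parent edge) together with $u_2$ being a root to force $v_1\in\{u_2,v_2\}$. You instead read the directed path in $G$ back as a semidirected path in $N$, obtaining $v_2\lesssim v_1$ (or conversely), and use the maximality of root components under $\lesssim$ to conclude $T_1=T_2$. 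Your argument is slightly more self-contained, avoiding the extra lemma from \crv; the paper's argument pinpoints the contradiction more concretely at the node level.
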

\begin{proof}
  Let $uu' \in E(T_1)$ and $vv' \in E(T_2)$.
  Suppose for contradiction that
  $\mu_d(u,u') \geq \mu_d(v,v')$.
  Let $G$ be a rooted partner of $N$ in which $u$ and $v$ are roots.
  Then $\mu_d(u,u') = \mu(u',G)$ and $\mu_d(v,v') = \mu(v',G)$.
  Since $G$ is tree-child, there exists a path $u' \leadsto v'$
  in $G$ by \Cref{lem:c9-5}
  (possibly up to relabeling if $\mu_d(u,u') = \mu_d(v,v')$) 
  Since $(v,v')$ is a tree edge in $G$, by Lemma~1 in \crv,
  $u' \leadsto v'$ contains or is contained in $(v,v')$.
  Both cases imply that $u'\in\{v,v'\}$
  (using that $v$ is a root for the first case),
  a contradiction. 
\end{proof}

\begin{prop}\label{prop:incomparable-root-mu-vec}
  In a weakly tree-child $\mathcal{L}$-network,
  different root components have incomparable root $\mu$-vectors.
\end{prop}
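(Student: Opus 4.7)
The plan is to exploit the weakly tree-child assumption by fixing a single tree-child rooted partner and then invoking the Cardona reachability lemma on $\mu$-vectors at its roots.

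First, since $N$ is weakly tree-child, I fix a tree-child rooted partner $G$ of $N$. By \Cref{prop:network-rooting}, $G = N^+_\rho$ for a unique root choice function $\rho$, so for each root component $T$ of $N$ the node $\rho(T)$ is a root of $G$, and by \Cref{def:complementary-mu-vec} we have $\mu_r(T) = \mu(\rho(T), G)$.

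Next, consider two distinct root components $T_1$ and $T_2$, and assume for contradiction that $\mu_r(T_1)$ and $\mu_r(T_2)$ are comparable; without loss of generality $\mu_r(T_1) \geq \mu_r(T_2)$. Writing $r_i = \rho(T_i)$, these are distinct roots of $G$ satisfying $\mu(r_1, G) \geq \mu(r_2, G)$. Because $G$ is tree-child, every internal node of $G$ has a tree child, and a straightforward induction shows that every node of $G$ (internal or leaf) has a tree descendant leaf. In particular, $r_2$ has a tree descendant leaf, so \Cref{lem:c9-5} is applicable to $v = r_2$.

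From \Cref{lem:c9-5}(2) if the inequality is strict, or \Cref{lem:c9-5}(3) if it is an equality, I obtain a directed (elementary) path from $r_1$ to $r_2$ in $G$. Since $r_1 \neq r_2$, this path has length at least $1$, forcing $\indegree(r_2, G) \geq 1$ and contradicting the fact that $r_2$ is a root of $G$. The step needing the most care is confirming that the path from \Cref{lem:c9-5} is indeed directed \emph{into} $r_2$ (rather than out of it), which is exactly how the lemma is stated once $r_2$ is the node assumed to have a tree descendant leaf; this is where the weakly tree-child hypothesis is essential, as it is what ensures such a tree descendant leaf exists at $r_2$ in our chosen partner $G$.
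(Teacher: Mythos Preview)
Your proof is correct and follows essentially the same approach as the paper: fix a tree-child rooted partner, observe that the root $\mu$-vectors coincide with $\mu(\rho(T_i),G)$, and use \Cref{lem:c9-5} together with the fact that roots have in-degree~$0$ to rule out comparability. One minor remark: in the equality case, \Cref{lem:c9-5}(3) only guarantees an elementary path connecting $r_1$ and $r_2$ without specifying its orientation, so you cannot conclude it is directed into $r_2$ specifically; but this is harmless, since either orientation forces one of the two roots to have positive in-degree, giving the same contradiction.
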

\begin{proof}
  Let $T_1\neq T_2$ be
  root components of a tree-child $\mathcal{L}$-network $N$.
  In a tree-child rooted partner of $N$,
  there is no directed path between the roots of $T_1$ and $T_2$.
  Therefore, by Lemmas~\ref{lem:c9-5} and~\ref{lem:complementary-mu-vec},
  $\mu_r(T_1)$ and $\mu_r(T_2)$ are incomparable.
\end{proof}

\begin{lem}\label{lem:up-down-stream}
  Let $N$ be a strongly tree-child $\mathcal{L}$-network.
  Suppose $uv, st$ are two
  (not necessarily distinct) edges 
  in root component $T$ of $N$ such that the undirected tree path from $u$
  to $t$ in $T$ contains $v$ and $s$.  Then:
  \begin{enumerate}
  \item $\mu_d(u, v) \geq \mu_d(s, t)$,
  \item $\mu_d(v, u)$ is incomparable to $\mu_d(s, t)$,
  \item $\mu_d(u, v)$ is incomparable to $\mu_d(t, s)$.
  \end{enumerate}
\end{lem}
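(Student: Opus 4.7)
The plan is to invoke \Cref{prop:network-rooting} for each claim to pick a rooted partner of $N$ in which the directional $\mu$-vectors of interest coincide with ordinary $\mu$-vectors of nodes, and then apply \Cref{lem:c9-5}. Because $N$ is strongly tree-child, every rooted partner is tree-child, so every node has a tree descendant leaf and the hypotheses of \Cref{lem:c9-5}(2,3) are always met.

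For claim~(1), I would set $\rho(T) = u$ to obtain a rooted partner $G_1$. In $G_1$ the undirected tree path from $u$ to $t$ inside $T$ becomes the directed path $u \to v \to \cdots \to s \to t$, so $v \leadsto t$ in $G_1$, while by definition $\mu_d(u,v) = \mu(v, G_1)$ and $\mu_d(s,t) = \mu(t, G_1)$. Applying \Cref{lem:c9-5}(1) then yields claim~(1).

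For claim~(2), assume first $uv \neq st$ and pick any node $r$ on the undirected path from $v$ to $s$ inside $T$ (allowing $r=v=s$ when $v=s$). Setting $\rho(T)=r$ gives a rooted partner $G_2$ in which $uv$ is directed $(v,u)$ and $st$ is directed $(s,t)$, so $\mu_d(v,u) = \mu(u, G_2)$ and $\mu_d(s,t) = \mu(t, G_2)$. The crux is that $G_2$ admits no directed path between $u$ and $t$ in either direction: inside $T$, now rooted at $r$, $u$ and $t$ lie in disjoint subtrees below $r$; and any directed path leaving $T$ enters $V_{DP}(N)$ and cannot return, because root components receive no incoming edges from outside (as in the proof of \Cref{prop:M-forest}). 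Incomparability of $\mu_d(v,u)$ and $\mu_d(s,t)$ then follows by ruling out each alternative of \Cref{lem:c9-5}: strict inequality either way would force a directed path $u \leadsto t$ or $t \leadsto u$ by part~(2), and equality would force an elementary path between $u$ and $t$ by part~(3), both impossible. The remaining case $uv=st$ (so $u=s$, $v=t$) is handled by a short separate argument that uses the $\mathcal{L}$-network property: neither $u$ nor $v$ is ambiguous, so each has an outgoing edge into $V_{DP}(N)$ whose tree descendant leaf witnesses a coordinate on which the two directional $\mu$-vectors disagree, in opposite directions.

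Claim~(3) then reduces to claim~(2) by the symmetry in \Cref{lem:complementary-mu-vec}: $\mu_d(u,v) + \mu_d(v,u) = \mu_r(T) = \mu_d(s,t) + \mu_d(t,s)$, hence $\mu_d(u,v) - \mu_d(t,s) = \mu_d(s,t) - \mu_d(v,u)$, so any $\geq$ or $\leq$ relation between $\mu_d(u,v)$ and $\mu_d(t,s)$ is equivalent to the opposite relation between $\mu_d(v,u)$ and $\mu_d(s,t)$. The main obstacle is the ``no path between $u$ and $t$'' step in claim~(2): it relies on the structural fact that no edge of $N$ has its target in $V_R(N)$, so that a directed path exiting the root component $T$ in any rooted partner cannot return, and it is this observation, together with the placement of $r$ between $v$ and $s$, that makes \Cref{lem:c9-5} applicable.
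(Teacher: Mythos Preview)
Your treatment of claim~(1) and the reduction of claim~(3) to claim~(2) via \Cref{lem:complementary-mu-vec} match the paper. For claim~(2) in the case $uv\neq st$, your approach---rooting at a node $r$ on the $v$-to-$s$ segment so that $u$ and $t$ land in disjoint subtrees of $T$, then using \Cref{undirected before directed} to rule out any directed path that leaves $T$ and returns---is correct and genuinely different from the paper's. The paper instead works in $G_u$ and $G_v$ simultaneously: it writes $\mu_d(v,u)=\sum_i\mu(w_i,G_u)$ over the neighbors $w_1,\dots,w_k$ of $u$ other than $v$, and argues that $\mu_d(v,u)\leq\mu_d(s,t)$ would force a path $t\leadsto w_i$ avoiding $u$ for every $i$, making all $w_i$ hybrid and contradicting tree-child in $G_v$. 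Your route is arguably cleaner, needing only one rooted partner and one invocation of \Cref{lem:c9-5}; the paper's route has the advantage of handling the degenerate case $uv=st$ uniformly.

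That degenerate case is where your proposal has a gap. The inference ``neither $u$ nor $v$ is ambiguous, so each has an outgoing edge into $V_{DP}(N)$'' is false: by \Cref{lem:ULminusRL}, a node $u\in V_R(N)$ with $\undegree(u)\geq 2$ is not ambiguous yet may have $\outdegree(u)=0$. For example, if $T$ is the path $w$--$u$--$v$ and only $w$ and $v$ carry directed edges into $V_{DP}(N)$, then $u$ has no outgoing edge at all. A fix in the spirit of your argument: since $G_u$ is tree-child, $v$ has a tree descendant leaf $\ell$ in $G_u$; because $v\in V_R(N)$ it has in-degree~$1$ in $G_u$, so the tree path $u,v,\dots,\ell$ is the unique path from $u$ to $\ell$, giving $\mu_r(T)_\ell=1=\mu_d(u,v)_\ell$ and hence $\mu_d(v,u)_\ell=0$. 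Swapping the roles of $u$ and $v$ produces a coordinate witnessing the opposite strict inequality, yielding incomparability.
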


\begin{proof}
  Let $G_u$ (resp. $G_v$) be a rooted partner of $N$ with $u$ (resp. $v$) as a root.
  For part 1, by \Cref{lem:c9-5}, we have
  $\mu_d(u, v) = \mu(v, G_u) \geq \mu(t, G_u) = \mu_d(s, t)$.

  For part 2, by symmetry, it suffices to show that
  $\mu_d(v, u) \not\leq \mu_d(s, t)$.
  Let $w_1, \ldots, w_k$ be the neighbors of $u$ besides $v$.
  Then $\mu(w_i, G_u) = \mu(w_i, G_v)$
  by \Cref{prop:directed-part-mu-vec} if $w_i \in V_{DP}(N)$, or
  \Cref{prop:directional-mu-vec} if $w_i \in V(T)$.
  Then by \Cref{lem:c9-4} we have
  $\mu_d(v, u) = \mu(u, G_v) = \sum_{i=1}^k \sum_{(u,w_i) \in E(G_v)}
  \mu(w_i,G_v) = \sum_{i=1}^k \sum_{(u,w_i) \in E(G_v)} \mu(w_i,G_u)$.

  First, suppose for contradiction that $\mu_d(v, u) < \mu_d(s, t) = \mu(t, G_u)$.
  Then for each $i$, $\mu(t,G_u) > \mu(w_i,G_u)$,
  and since $G_u$ is tree-child
  there exists a path $t \leadsto w_i$ in $G_u$ by \Cref{lem:c9-5}.
  As $u$ is a root in $G_u$
  and not contained in these paths, $w_1,\ldots,w_k$ are hybrid nodes.  Then $u$
  does not have a tree child in $G_v$, a contradiction.

  Now suppose instead $\mu_d(v, u) = \mu_d(s, t) = \mu(t, G_u)$, then
  $\mu(t,G_u) \geq \mu(w_i,G_u)$ for all $i$.
  If $\mu(t,G_u) = \mu(w_i,G_u)$ for some $i$, then $w_i = w_1$ is the only
  neighbor of $u$ other than $v$.
  By \Cref{lem:c9-5}, $t$ and $w_1$ are connected by an elementary path in $G_u$,
  which is impossible as both have $u$ as a parent.
  Therefore $\mu(t,G_u) > \mu(w_i,G_u)$ for all $i$, which
  leads to a contradiction by the argument above.

  Part 3 follows from part 2 using that
  $\mu_d(a, b) + \mu_d(b, a) = \mu_r(T)$ for any $ab \in E(T)$
  by \Cref{lem:complementary-mu-vec}.
\end{proof}

Next, we relate edges with identical directional $\mu$-vectors.

\begin{lem}\label{lem:elem-path}
  Let $N$ be a strongly tree-child $\mathcal{L}$-network,
  and $x$ a fixed $\mu$-vector.
  If we direct all edges $uv \in E_R(N)$
  with $\mu_d(u, v) = x$ as
  $(u, v)$, then these edges form a directed path.
  If the path is nonempty, we denote the first node as $h(x, N)$.
\end{lem}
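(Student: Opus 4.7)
The plan is to show that the set of directed edges
$S := \{(u,v) : uv \in E_R(N),\ \mu_d(u,v) = x\}$
forms a simple directed path, by proceeding in three stages.

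First, I would argue that all edges of $S$ lie in a single root component $T$ of $N$. By Proposition~\ref{incomparable components}, directional $\mu$-vectors from distinct non-trivial root components are pairwise incomparable, so two edges sharing the directional $\mu$-vector $x$ must share a root component.

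Second, I would show that the directed subgraph $(V(T), S)$ has in-degree and out-degree at most one at every vertex. The head case, two $S$-edges $(u_1,v)$ and $(u_2,v)$ sharing a head $v$, follows from Lemma~\ref{lem:up-down-stream} part~3 applied to the two-edge undirected path through $v$: part~3 forces $\mu_d(u_1,v)$ and $\mu_d(u_2,v)$ to be incomparable, contradicting that both equal $x$. The tail case, two edges $(v,w_1)$ and $(v,w_2)$ sharing a tail $v$, needs a different argument: consider the rooted partner $G$ with $v$ as root of $T$ (which exists by Proposition~\ref{prop:network-rooting}); there $w_1,w_2$ are both children of $v$ with $\mu(w_1,G)=\mu(w_2,G)=x$. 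Since $G$ is tree-child and each $w_i$ has a tree descendant leaf, Lemma~\ref{lem:c9-5} part~3 forces an elementary directed path between $w_1$ and $w_2$; but each $w_i$ is a tree node with $v$ as its unique parent and $v$ is a root in $G$, so no directed path between $w_1$ and $w_2$ can exist, a contradiction. Consequently $(V(T), S)$ is a disjoint union of directed simple paths, since $T$ is acyclic.

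Third, I would show this disjoint union is connected, hence a single directed path. Suppose for contradiction that $e_1=(u_1,v_1)$ and $e_2=(u_2,v_2)$ lie in distinct components of $(V(T),S)$; by step~2 they share no vertex, so there is a non-trivial undirected bridge $P$ in $T$ between them. Relative to $P$, each $e_i$ is oriented either toward $P$ (its tail is the far endpoint) or away from $P$ (its tail is the near endpoint), giving four configurations. In the two \emph{opposing} configurations (both toward $P$ or both away from $P$), Lemma~\ref{lem:up-down-stream} part~3 applied to the tree path that traverses $e_1$, $P$, and $e_2$ (with labels chosen according to the configuration) forces two directional $\mu$-vectors to be incomparable even though they are equal---both $x$ in the toward-toward case, or both $\mu_r(T)-x$ via Lemma~\ref{lem:complementary-mu-vec} in the away-away case---a contradiction. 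In each of the two \emph{aligned} configurations (one toward, one away from $P$), one can choose a root in $T$ on the outer side of the ``away'' edge so that the resulting rooted partner $G$ realizes both $S$-orientations simultaneously. In $G$ the two heads lie on a directed path through $P$, which stays within $V(T)$ by Lemma~\ref{undirected before directed}; by Lemma~\ref{lem:c9-5} part~1, $\mu$ is non-increasing along this path and is sandwiched between $x$ and $x$, hence constantly $x$. Thus every intermediate bridge edge belongs to $S$ with the compatible orientation, linking $e_1$ and $e_2$ in $(V(T),S)$ and contradicting their being in distinct components.

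The main obstacle will be organizing the case analysis in step~3: identifying, in each aligned sub-case, a root choice in $T$ that realizes both $S$-orientations at once (and verifying such a root exists in the correct subtree obtained by removing the two edges), and in each opposing sub-case, labeling endpoints carefully so Lemma~\ref{lem:up-down-stream} is applied to the right pair of directional $\mu$-vectors to yield the desired incomparability-equality contradiction.
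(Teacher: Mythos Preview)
Your proposal is correct in substance and uses the same ingredients as the paper (Proposition~\ref{incomparable components}, Lemma~\ref{lem:up-down-stream}, Lemma~\ref{lem:c9-5}), but the organization differs. One small slip: in the aligned sub-case you want the root on the outer side of the \emph{toward} edge, not the \emph{away} edge. The toward edge has its tail on the far side of the bridge; rooting beyond that tail (or at it) realizes its $S$-orientation, and since this subtree lies on the tail side of the away edge as well, it realizes that orientation too. Rooting on the outer side of the away edge would reverse it.

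The paper's proof is shorter because it collapses your steps~2 and~3. Rather than first bounding in/out-degrees and then doing a four-case split on orientations, the paper takes any two edges $uv, st \in E_x$ and uses Lemma~\ref{lem:up-down-stream} up front to normalize labels so that the tree path is $u\,v\,\ldots\,s\,t$ with both $S$-orientations aligned along it---effectively disposing of your two ``opposing'' cases before they arise. Then, rooting at $u$ and invoking Lemma~\ref{lem:c9-5}(3), it obtains an \emph{elementary} path from $v$ to $t$; this path must coincide with the tree path, which simultaneously forces all intermediate nodes to have undirected degree~$2$ (so no branching, subsuming your step~2) and, by sandwiching, puts every intermediate edge into $E_x$ (your aligned sub-case). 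A short maximal-path argument finishes. Your structured decomposition is perfectly valid and perhaps more transparent pedagogically, but the paper's reduction to a single aligned case via Lemma~\ref{lem:up-down-stream} buys real economy.
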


\begin{proof}
  Let $E_x = \{uv \in E_R(N): \mu_d(u, v) = x\}$.
  Take $uv$ and $st$ in $E_x$.
  By \Cref{incomparable components}, they
  are in the same root component $T$,
  so there is an undirected path $p$ in $T$
  connecting $uv$ and $st$.
  By applying \Cref{lem:up-down-stream} multiple times
  and permuting 
  labels if necessary, we may assume $p$
  is of the form $uv \ldots st$ with $\mu_d(u, v) = \mu_d(s, t) = x$.
  For a rooted partner $G$ of $N$ with $u$ a root, we have
  $\mu(v, G) = \mu(t, G) = x$, which implies by \Cref{lem:c9-5}
  there is an elementary path in
  $G$ from $v$ to $t$.  By \Cref{undirected before directed}, this path lies
  in $E_R(N)$. But since $E_R(N)$ induces a forest, this
  elementary path must be the $v \ldots st$ part of the path $p$. Therefore
  all the intermediate nodes $w$ in $p$ have $\undegree(w, N) = 2$ and
  $\indegree(w, N) = \outdegree(w, N) = 0$.  Furthermore, if $w_1$ and $w_2$ are
  consecutive nodes in $p$, then $w_1w_2\in E_x$ because
  $x=\mu(v, G) \geq \mu(w_2, G) = \mu_d(w_1, w_2) \geq \mu(t, G)=x$.
  Therefore all edges in $p$ are in $E_x$, and form a directed path
  when directed as in the statement.
  
  Now take an undirected path $p_0$ of edges in $E_x$, of maximum length, and let
  $e$ one of its edges.
  To show that $p_0$ contains all the edges in $E_x$, take $e'\in E_x$.
  By the previous argument, $e$ and $e'$ are connected by a tree path $p_1$.
  Also by the previous argument, all intermediate nodes in $p_0$ and in $p_1$
  have $\undegree = 2$. Since $p_1$ cannot extend $p_0$ by definition of $p_0$,
  $p_1$ must be contained in $p_0$, therefore $e'$ is in $p_0$ as claimed.
\end{proof}

Finally, the next result was proved for orchard DAGs,
which include tree-child DAGs
\citep[Proposition~10]{2024Cardona-extendedmu}.
We restrict its statement to hybrid nodes here, because
we allow networks to have in and out degree-1 nodes.
\begin{lem}\label{unique hybrid mu-vector}
  Let $G$ be a tree-child $\mathcal{L}$-DAG. Let $u,v$ be
  distinct hybrid nodes in $G$. Then
  $\mu(u,G) \neq \mu(v,G)$.
\end{lem}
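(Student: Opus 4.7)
The plan is to argue by contradiction: suppose $\mu(u,G) = \mu(v,G)$ for distinct hybrid nodes $u,v$ and derive a contradiction from the tree-child property.

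First I would dispose of the case where both $u$ and $v$ are leaves: since $G$ is an $\mathcal{L}$-DAG, its leaves carry distinct labels, so their $\mu$-vectors are distinct standard basis vectors, contradicting $\mu(u,G)=\mu(v,G)$. Hence at least one of $u,v$ is internal, and without loss of generality $v$ is internal. Since $G$ is tree-child, $v$ has a tree descendant leaf; similarly $u$ has a tree descendant leaf (taking $u$ itself if $u$ happens to be a leaf, or using tree-child if $u$ is internal, noting that the paper's definitions allow a hybrid node to be a leaf).

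The key step is to apply \Cref{lem:c9-5} part~3, which provides an elementary path in $G$ connecting $u$ and $v$. Because both nodes have tree descendant leaves, the lemma can be invoked in either direction, and after renaming if necessary I may assume the path is $u = w_0 \to w_1 \to \cdots \to w_k = v$ with $k \geq 1$. By the definition of an elementary path, $w_0 = u$ has out-degree one in $G$, and each intermediate $w_i$ for $0 < i < k$ has in-degree and out-degree one in $G$. In particular the node $w_{k-1}$ has out-degree one in $G$, so $v$ is its unique child, whether $k=1$ (using the first-node condition on $u$) or $k \geq 2$ (using the intermediate-node condition on $w_{k-1}$).

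To conclude, $w_{k-1}$ is internal, so the tree-child property of $G$ forces its unique child to be a tree node; hence $v$ is a tree node, contradicting the assumption that $v$ is hybrid. I do not expect a substantial obstacle in this argument; the only care needed is in the direction bookkeeping for the elementary path and in handling the both-leaves case separately upfront, since under the paper's conventions a hybrid node is not automatically internal.
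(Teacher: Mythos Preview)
Your argument is correct. The paper does not actually supply its own proof of this lemma: it simply cites Proposition~10 of \cite{2024Cardona-extendedmu}, where the result is proved more generally for orchard DAGs. Your route via \Cref{lem:c9-5} part~3 is a clean, self-contained alternative that uses only tools already available in the present paper, and the contradiction at $w_{k-1}$ (out-degree one, hence its unique child $v$ would have to be a tree node) is exactly right.

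Two minor remarks on streamlining. First, you only need \emph{one} of $u,v$ to have a tree descendant leaf in order to invoke \Cref{lem:c9-5} part~3; the conclusion ``connected by an elementary path'' is already symmetric, so establishing this for both nodes is not required before relabeling. Second, the separate treatment of the both-leaves case is not strictly necessary: a leaf is its own tree descendant leaf via the trivial path, so \Cref{lem:c9-5} applies directly, and the contradiction then surfaces at the first node of the elementary path (which must have out-degree~$1$, impossible for a leaf). Keeping the case split is harmless, but you could shorten the write-up by folding it into the main argument.
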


\subsection{Reconstructing a complete tree-child network}\label{sec:reconstruct}

To reconstruct a complete tree-child network $N$ from its
edge-based $\mu$-representation,
\Cref{alg:canon-dag} will first construct $\muv(G)$
for a rooted partner $G$ from $\mue(N)$.
Then \Cref{alg:network-from-rooted-partner} will use
$\mue(N)$ to undirect some edges in $G$ and recover $N$.

\begin{algorithm}[H]
  \caption{Given $A = \mue(N)$
    from a
    tree-child $\mathcal{L}$-network $N$, compute
    $B = \muv(G)$ for some rooted partner $G$ of $N$
    }
  \label{alg:canon-dag}
  \begin{algorithmic}[1]
    \REQUIRE multiset $A$
    \ENSURE multiset $B$
    \STATE $B_1 \leftarrow \lbag x \colon \{(x,\treetag)\} \in A\rbag$
    \STATE $B_2 \leftarrow \{ x \colon \{(x,\hybtag)\} \in A \}$
    \STATE $B_3 \leftarrow \{ x \colon \{(x,\roottag)\} \in A \}$
    \STATE $B_4 \gets \lbag \rbag$
    \FOR{$z$ in $B_3$} \label{ln:B3loop}
      \STATE $M(z) \leftarrow \lbag x : \{(x,\treetag),(z-x,\treetag)\} \in A\rbag$
      \label{ln:Mz}
      \STATE \textbf{if} $M(z)$ is empty \textbf{then}
      skip to next iteration
      \STATE {$r(z) \gets$ some arbitrary element of $M(z)$} \label{ln:rs}
      \FOR{$\{(x_1,\treetag),(x_2,\treetag)\} \in A$ with $x_1+x_2=z$} \label{ln:pick-from-pair}
      \STATE $B_4 \gets B_4 +\lbag y \rbag$
      where $y=x_i$ if $x_i\leq r(z)$
      else $y=z-x_i$ if $x_i>r(z)$ ($i=1$ or $2$)
      \label{ln:pick-from-pair-choice}
      \ENDFOR
    \ENDFOR
    \STATE $B \leftarrow B_1 + B_2 + B_3 + B_4$
    \RETURN $B$
  \end{algorithmic}
\end{algorithm}

Continuing with $N$ in Fig.~\ref{fig:compare2nets} (left),
$A=\mue(N)$ is given in the Appendix.
\Cref{alg:canon-dag} starts with
$B_1 = \{
  (1{,}0{,}0{,}0{,}0, 0{,}0{,}0),
  \ldots,
  (0{,}0{,}0{,}0{,}0, 0{,}0{,}1),
  (0{,}0{,}0{,}0{,}0, 1{,}1{,}0)\}$
for edges incident to leaves and $e_5$ (in black in Fig.~\ref{fig:compare2nets}).
$B_2 = \{(0{,}0{,}0{,}0{,}0, 1{,}1{,}1)\}$,
for the unique $\mu$-vector shared by all 3 hybrid edges in $N$.
$B_3$ has a single element $z = (1{,}1{,}1{,}1{,}1, 3{,}3{,}3)$
because $N$ has a single root component,
so the loop on line~\ref{ln:B3loop} has a single iteration and
all elements $\{(x_1,\treetag),(x_2,\treetag)\}$ in $A$ satisfy $x_1+x_2=z$
(from edges in brown in Fig.~\ref{fig:compare2nets}).
On line~\ref{ln:Mz}, $M(z)$ has 12 elements
(see the Appendix).
We can arbitrarily
pick $r(z)= (0{,}0{,}0{,}1{,}1, 2{,}2{,}2) \in M(z)$, which corresponds
to $e_7$ directed rightward. Then
$B=\muv(G)$ for the partner $G$ of $N$ rooted at the
node incident to $e_6$ and $e_7$ (see Fig.~\ref{fig:compare2nets-G}).

\begin{proof}[Proof of correctness for \Cref{alg:canon-dag}]
  As $N$ and $\mathcal{C}(N)$ have the same
  rooted partners and $\mue(N) =  \mue(\mathcal{C}(N))$, we may assume $N$
  to be complete.

  Let $\rho$ be a root choice function such that $\rho(T) = h(r(\mu_r(T)), N)$,
  where $r$ is the function on line~\ref{ln:rs} and $h$ is defined in
  \Cref{lem:elem-path}.  By \Cref{lem:complementary-mu-vec} and
  \Cref{prop:incomparable-root-mu-vec}, $\rho(T) \in V(T)$ is
  well-defined.
  Let $G = N_\rho^+$. We shall show that \Cref{alg:canon-dag}, with
  $A = \mue(N)$ as input, produces the output $B = \muv(G)$.

  \noindent
  Consider partitioning $V(N) = V(G)$ into the following sets:
  \begin{enumerate}
  \item[] $V_1 = \{v$ is a tree node in the directed part of $N\}$,
  \item[] $V_2 = \{v$ is a hybrid node$\}$,
  \item[] $V_3 = \{v$ is a root in $G\}$,
  \item[] $V_4 = \{v$ in a root component of $N$, but not a root in $G\}$.
  \end{enumerate}
  We will establish
  $B_i = \lbag \mu(v, G): v \in V_i \rbag$ for the
  multisets $B_i$ in the algorithm ($i=1,\ldots,4$), to conclude
  the proof.

  By \Cref{lem:directed-part-edge-and-node},
  $(u, v) \mapsto v$ is a bijection between
  the directed tree edges and $V_1$.
  Then by \Cref{def:edge-based-mu-rep},
  $B_1 = \lbag \mu(v, N): (u, v) \in E_T(N) \rbag = \lbag \mu(v, G): v \in V_1 \rbag$,
  which concludes case $i=1$.

  For $i = 2$, \Cref{unique hybrid mu-vector} implies that
  $\mu(u, G) \neq \mu(v, G)$ for distinct $u \neq v$ in $V_2$.
  Therefore
  $\lbag \mu(v, G): v \in V_2 \rbag = \{ \mu(v, G):
  v \in V_2 \}$.  By the definition of hybrid edges,
  $B_2 = \{x \colon \{(x,\hybtag)\} \in A \} = \{\mu(v, G):
  (u, v) \in E_H(N)\}$ is equal to
  $\{\mu(v, G): v \in V_2 \}$, which implies
  $B_2 = \lbag \mu(v, G): v \in V_2 \rbag$.

  For $i = 3$, by \Cref{lem:complementary-mu-vec} and
  \Cref{prop:incomparable-root-mu-vec}, the $\mu$-vectors of the roots
  in $G$ are the same as the root $\mu$-vectors, and
  are all distinct.  Hence
  $B_3 = \lbag \mu(v, G): v \in V_3 \rbag$.

  For $i = 4$, let $E_R^+$ be the set of edges in $G$ that
  corresponds to $E_R(N)$. Consider the map $V_4 \to E_R^+$ that
  associates $v$ to its parent edge $(u,v)$ in $G$.
  It is well-defined because $V_4$ excludes the roots of $G$,
  root components only contain tree nodes (\Cref{prop:M-forest}) and
  $uv\in E_R(N)$ by \Cref{lem:directed-part-edge-and-node}.
  Furthermore, the map is a bijection.  Therefore we have
  $\lbag \mu(v, G) : v \in V_4 \rbag = \lbag \mu_d(u, v) : (u, v) \in E_R^+
  \rbag$.

  $B_4$ is constructed in \Cref{alg:canon-dag}
  by taking a $\mu$-vector from the pair
  $\mu_d(s, t)$ and $\mu_d(t, s)$, for each undirected edge $st$ in each
  root component.
  Let $T$ be the root component that contains $st$ and let
  $z = \mu_r(T) \in B_3$.
  Then $\mu_d(s, t) + \mu_d(t, s)$ equals $z$ but no other root $\mu$-vector
  by \Cref{prop:incomparable-root-mu-vec}, so $\mu(st)$ is considered
  at exactly one iteration of the loop on line~\ref{ln:pick-from-pair}.
  Next we need to show that on line~\ref{ln:pick-from-pair-choice},
  exactly one $\mu$-vector gets chosen, and is
  $y = \mu_d(s, t)$ where $(s, t) \in E_R^+$.

  From \Cref{lem:elem-path},
  let $u = h(r(z), N)$ be the root of $T$ in $G$
  and $v$ such that $r(z) = \mu_d(u, v)$.
  Since $u$ is a root in $G$ and $(s, t) \in E(G)$,
  the tree path $p$ in $T$ from $u$ to $t$ contains $s$.
  If $p$ also contains $v$, then
  $\mu_d(s, t) \leq \mu_d(u, v)$ and $\mu_d(t, s) \not\leq \mu_d(u, v)$
  by \Cref{lem:up-down-stream}, so
  line~\ref{ln:pick-from-pair-choice} defines $y = \mu_d(s, t)$ as claimed.
  If $p$ does not contain $v$, then the tree path from $v$ to $t$ contains
  $u$ and $s$, so by \Cref{lem:up-down-stream}
  $\mu_d(s, t)$ is incomparable to $\mu_d(u, v)$ and
  $\mu_d(t, s) \geq \mu_d(u, v)$.
  Further, $\mu_d(t, s) > \mu_d(u, v)$ by the choice $u = h(r(z), N)$
  and \Cref{lem:elem-path}.
  Therefore line~\ref{ln:pick-from-pair-choice} defines
  $y=z-\mu_d(t, s) = \mu_d(s, t)$,
  which concludes the proof.
\end{proof}

\begin{algorithm}[H]
  \caption{Given $A = \mue(N)$ from a
    tree-child $\mathcal{L}$-network $N$, and a rooted partner
    $G$ of $N$, modify $G$ to obtain $\mathcal{C}(N)$ }
  \label{alg:network-from-rooted-partner}
  \begin{algorithmic}[1]
    \STATE $B \leftarrow \mue(G)$
    \STATE $F \leftarrow \lbag x : \{(x,\treetag)\} \in B - A \rbag$ \label{ln:M-nodes}
    \FOR{$x \in \text{Unique}(F)$}
      \STATE $m(x) \leftarrow $ multiplicity of $x$ in $F$
      \STATE $p(x) \leftarrow $ the directed path in $G$ formed by
      $\{(u,v)\in E(G): \mu_V(v, G) = x\}$
      \label{ln:elementary-path}
      \STATE undirect the first $m(x)$ edges in $p(x)$ \label{ln:undirect}
    \ENDFOR
    \RETURN $G$
  \end{algorithmic}
\end{algorithm}

Given $\mue(N)$ from $N$ in Fig.~\ref{fig:compare2nets} and $G$
from \Cref{alg:canon-dag} (Fig.~\ref{fig:compare2nets-G}),
$F$ contains 6 $\mu$-vectors
(see the Appendix)
so 6 edges in $G$
are undirected to obtain $N$.
One of them $x=(0{,}0{,}1{,}0{,}0, 0{,}0{,}0)$ has multiplicity 1 in $F$
but corresponds to an elementary path of 2 edges in $G$
adjacent to $b$. On this path, only $e_2$ is undirected by
\Cref{alg:network-from-rooted-partner}.

\begin{proof}[Proof of correctness of \Cref{alg:network-from-rooted-partner}]
  Note that line~\ref{ln:elementary-path} uses \Cref{lem:c9-5} to claim that
  $p(x)$ is a directed path,
  and so line~\ref{ln:undirect} can be applied.

  Let $E_R^+$ denote the set of edges in $G$ that corresponds to
  edges in $E_R(N)$.  It suffices to show that line~\ref{ln:undirect}
  undirects all edges in $E_R^+$ and no other.
  Obviously, line~\ref{ln:M-nodes} defines
  $F = \lbag \mu(v, G): (u, v) \in E_R^+ \rbag$.
  Suppose $F$ consists of elements $x_1, \ldots, x_k$ with
  multiplicities $m_1, \ldots, m_k$. 
  We know that $E_R^+$ exactly consists of $m_i$ edges whose children
  have $\mu$-vector $x_i$, for $i = 1, \ldots, k$.  Thus we only need
  to show that for each $x_i$, 
  the first $m_i$ edges in $p(x_i)$
  are in $E_R^+$.
  By \Cref{undirected before directed}, if an edge $e$
  is not in $E_R^+$, then all edges below it are also not in $E_R^+$.
  Therefore along the path $p(x_i)$, edges in $E_R^+$ must come first
  before any edge not in $E_R^+$,
  which finishes the proof.
\end{proof}

The following theorem derives directly from Theorem~1 in \crv\ to
reconstruct $G$ from $\muv(G)$, and the application of
Algorithms~\ref{alg:canon-dag} and~\ref{alg:network-from-rooted-partner}.

\begin{thm}\label{thm:samemu-samenet}
  Let $N_1$ and $N_2$ be strongly tree-child $\mathcal{L}$-networks.
  Then $\mue(N_1) = \mue(N_2)$ if and
  only if $N_1$ and $N_2$ are phylogenetically isomorphic.
\end{thm}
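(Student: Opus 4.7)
The plan is to chain the correctness of \Cref{alg:canon-dag} and \Cref{alg:network-from-rooted-partner} with Theorem~1 of \crv\ (which reconstructs a tree-child $\mathcal{L}$-DAG up to label-preserving isomorphism from its node-based $\mu$-representation). The ``only if'' direction is essentially formal: a label-preserving SDAG isomorphism $\phi\colon\mathcal{C}(N_1)\to\mathcal{C}(N_2)$ carries rooted partners to rooted partners, preserves edge types and root components, and preserves each node's set of labeled descendants in every rooted partner, so the three kinds of entries in \Cref{def:edge-based-mu-rep} correspond under $\phi$ and $\mue(N_1)=\mue(N_2)$.

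For the ``if'' direction, assume $A:=\mue(N_1)=\mue(N_2)$. Since $\mue(N_i)=\mue(\mathcal{C}(N_i))$ and $N_i$ shares its rooted partners with $\mathcal{C}(N_i)$, I may replace $N_i$ by $\mathcal{C}(N_i)$ and assume both networks are complete. I then apply \Cref{alg:canon-dag} to $A$, making an arbitrary but fixed choice on line~\ref{ln:rs}, producing a single multiset $B$. By the algorithm's correctness, $B=\muv(G_i)$ for some rooted partner $G_i$ of $N_i$, simultaneously for $i=1,2$; hence $\muv(G_1)=\muv(G_2)$. Because each $N_i$ is strongly tree-child, $G_i$ is a tree-child $\mathcal{L}$-DAG, so Theorem~1 of \crv\ supplies a label-preserving DAG isomorphism $\phi\colon G_1\to G_2$.

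I then apply \Cref{alg:network-from-rooted-partner} to $(A,G_i)$; by its correctness the output is $\mathcal{C}(N_i)=N_i$. Its operations depend only on $A$, on $\mue(G_i)$ (entering on line~\ref{ln:M-nodes}), on the node $\mu$-vectors of $G_i$, and on the directed-path structure of $G_i$. Applying the ``only if'' direction to the DAGs $G_1,G_2$ gives $\mue(G_1)=\mue(G_2)$ under $\phi$; moreover $\phi$ sends each directed path $p(x)$ in $G_1$ to $p(x)$ in $G_2$ preserving its order, so the edges undirected on $G_1$ correspond under $\phi$ to those undirected on $G_2$. Hence $\phi$ descends to a label-preserving isomorphism $\mathcal{C}(N_1)\to\mathcal{C}(N_2)$, i.e. $N_1\cong N_2$.

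The only substantive point to verify is that the non-deterministic choices inside the algorithms do not compromise the argument: \Cref{alg:canon-dag}'s correctness guarantee holds for any valid choice of $r(z)$, so fixing one choice once and running the algorithm on the shared input $A$ yields a single multiset $B$ valid for both $N_1$ and $N_2$; and in \Cref{alg:network-from-rooted-partner}, the ``first $m(x)$'' edges of $p(x)$ are determined by the path order, which $\phi$ preserves. Beyond this bookkeeping, the proof is just the composition of the three stated results.
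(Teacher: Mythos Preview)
Your proposal is correct and follows exactly the route the paper itself indicates: the paper's proof is the single sentence ``derives directly from Theorem~1 in \crv\ to reconstruct $G$ from $\muv(G)$, and the application of Algorithms~\ref{alg:canon-dag} and~\ref{alg:network-from-rooted-partner}'', and you have simply unpacked that sentence, including the bookkeeping about fixing the nondeterministic choice $r(z)$ once for the shared input $A$ and about why the label-preserving DAG isomorphism $\phi$ carries the path $p(x)$ (and hence its first $m(x)$ edges) in $G_1$ to the corresponding path in $G_2$. Your treatment of the ``only if'' direction is also the natural one and is more explicit than what the paper writes.
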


\Cref{thm:samemu-samenet} does not generally hold for weakly tree-child
networks, as seen in a counter-example in Fig.~\ref{fig:thmcounterex}.

\begin{figure}
\centering
\centerline{\includegraphics{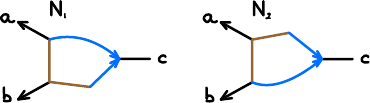}}
\caption{Weakly tree-child $(a,b,c)$-networks
  for which
  \Cref{thm:samemu-samenet} does not hold.
  For each network, the rooted partner rooted at the degree-2 node is tree-child.
  The other 2 rooted partners are not.
  $N_1$ and $N_2$ are not phylogenetically isomorphic
  yet
  $\mue(N_1) = \mue(N_2) = A_1 + \cdots + A_4$ with
  $A_1=\lbag
  \{((1,0,0), \treetag)\},
  \{((0,1,0), \treetag)\},
  \{((0,0,1), \treetag)\}\rbag$,
  \\
  $A_2=\lbag
  \{((0,0,1), \hybtag)\},
  \{((0,0,1), \hybtag)\}\rbag$,
  $A_3=\lbag
  \{((1,1,2), \roottag)\}\rbag$ and
  $A_4=\lbag
  \{((1,0,1), \treetag), ((0,1,1), \treetag)\},
  \{((1,1,1), \treetag), ((0,0,1), \treetag)\}
  \rbag$.
}\label{fig:thmcounterex}
\end{figure}

\section{The edge-based $\mu$-distance}

\begin{defn}[\textbf{edge-based $\mu$-distance}]\label{metric}
  Let $N_1$ and $N_2$ be $\mathcal{L}$-networks.
  The \emph{edge-based} $\mu$-dissimilarity between
  $N_1$ and $N_2$ is defined as
  $$\dmue(N_1,N_2) = |\mue(N_1) \triangle \mue(N_2)|\;.$$
\end{defn}

\noindent
For example, $\dmue(N,N') = 2$ in Fig.~\ref{fig:nodebased-vs-edgebased-distance}.
For the networks in Fig.~\ref{fig:compare2nets}, $\dmue(N,N')=5$
due to non-matching $\mu$-vector sets for edges $e_6$ and $e_7$ in $N$
and the 3 unlabelled tree edges in $N'$.
(see Fig.~\ref{fig:compare2nets} and the Appendix
for details).

We are now ready to state our main theorem,
which justifies why we may refer to $\dmue$ as a distance.

\begin{figure}
  \centerline{\includegraphics{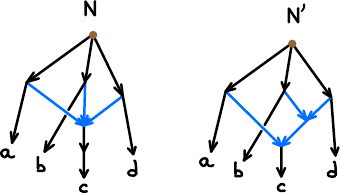}}
  \caption{Example rooted networks for which the node-based
  and edge-based $\mu$ distances differ, on leaves $(a,b,c,d)$:
  Left: $N$ is tree-child, non-bicombining.
  Right: $N'$ is not tree-child, but bicombining.
  Both have 3 nodes (including leaf $c$) with $\mu$-vector $(0{,}0{,}1{,}0)$,
  and $\dmuv(N,N')=0$.
  $N$ and $N'$ both have 5 edges with $\mu$-vector $(0{,}0{,}1{,}0)$,
  of which 3 (resp. 4) are of hybrid type in $N$ (resp. $N'$),
  and $\dmue(N,N')=2$.
  }\label{fig:nodebased-vs-edgebased-distance}
\end{figure}

\begin{thm}\label{metric correctness}
  For a vector of leaf labels $\mathcal L$, $\dmue$ is a distance on the class of
  (complete) strongly tree-child $\mathcal L$-networks.
\end{thm}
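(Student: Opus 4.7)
The plan is to verify the four defining properties of a metric on the space of complete strongly tree-child $\mathcal{L}$-networks (modulo phylogenetic isomorphism): non-negativity, symmetry, the triangle inequality, and the identity of indiscernibles. Three of these are essentially immediate, and the real content is concentrated in the fourth.

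First, non-negativity of $\dmue$ is trivial since it is defined as the cardinality of the multiset symmetric difference. Symmetry is equally immediate from the symmetry of $\triangle$ on multisets: $\mue(N_1)\triangle\mue(N_2) = \mue(N_2)\triangle\mue(N_1)$.

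For the triangle inequality I would invoke the standard fact that for any multisets $A,B,C$, $|A\triangle C|\leq |A\triangle B|+|B\triangle C|$. This is proved elementwise: for each potential element $x$, the multiplicities $a,b,c$ of $x$ in $A,B,C$ satisfy $|a-c|\leq |a-b|+|b-c|$, and summing over $x$ yields the desired inequality for $\dmue(N_1,N_3)\leq \dmue(N_1,N_2)+\dmue(N_2,N_3)$. This part does not use any phylogenetic structure.

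The main step is the identity of indiscernibles: $\dmue(N_1,N_2)=0$ if and only if $N_1\cong N_2$. By definition, $\dmue(N_1,N_2)=0$ is equivalent to $\mue(N_1)=\mue(N_2)$. If $N_1\cong N_2$ then $\mathcal{C}(N_1)$ and $\mathcal{C}(N_2)$ are isomorphic as semidirected graphs via a label-preserving bijection, which maps edges to edges of the same type and preserves directional $\mu$-vectors and root $\mu$-vectors; hence $\mue(N_1)=\mue(N_2)$. Conversely, if $\mue(N_1)=\mue(N_2)$, then both are complete (being tree-child $\mathcal{L}$-networks we apply $\dmue$ to representatives $\mathcal{C}(N_i)$), and \Cref{thm:samemu-samenet} gives $N_1\cong N_2$ directly.

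The main obstacle here is not truly in this theorem but in the supporting \Cref{thm:samemu-samenet}, whose proof has already been carried out via Algorithms~\ref{alg:canon-dag} and~\ref{alg:network-from-rooted-partner}. Given that, the present theorem reduces to a clean assembly of the four properties, and no further technical work is required beyond citing the reconstruction result to get the nontrivial direction of the identity.
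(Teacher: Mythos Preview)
Your proof is correct and follows essentially the same approach as the paper: the three easy metric axioms come from general properties of the multiset symmetric difference, and the separation property is obtained by invoking \Cref{thm:samemu-samenet}. You add a bit more detail (the elementwise argument for the triangle inequality and the easy direction of the identity of indiscernibles), but the structure is identical.
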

\begin{proof}
  From the properties of the symmetric difference,
  $\dmue$ is a dissimilarity
  in the sense that is it symmetric, non-negative, and satisfies the
  triangle inequality. It remains to show that $\dmue$ satisfies
  the separation property.
  Let $N_1$ and $N_2$ be tree-child $\mathcal{L}$-networks.
  If $\dmue(N_1,N_2) = 0$ then $\mue(N_1) = \mue(N_2)$ and
  by \Cref{thm:samemu-samenet}, $N_1 \cong N_2$.
\end{proof}

For unrooted trees, the $\mu$-vector of each undirected edge encodes
the bipartition on $\mathcal{L}$ associated with the edge,
hence $\dmue$ agrees with the Robinson-Foulds distance on
unrooted trees.

On rooted trees, $\dmue$ agrees with $\dmuv$. Indeed, if $T$ is a directed tree
or forest on $\mathcal{L}$,
then each non-root node $v$ has a unique
parent edge $e$ with element $\{(\muv(v),\treetag)\}$ in $\mue(T)$;
and each root $u$ forms to a trivial root component with element
$\mue(u) = \{(\muv(u),\roottag)\}$ in $\mue(T)$.

However, $\dmue$ does not generally extend $\dmuv$.
For example, consider the rooted networks in Fig.~\ref{fig:nodebased-vs-edgebased-distance}.
They have the same $\muv$ representation, hence $\dmuv(N,N') = 0$.
However, their $\mue$ representations differ, due
to edges with the same $\mu$ vector (1 path to $c$ only) but different tags
(tree edge in $N$ versus hybrid edge in $N'$).
Hence $\dmue$ can distinguish these networks: $\dmue(N,N') > 0$.

\bigskip

We can compute $\dmue$ using a variant of Algorithm~3 in \crv.
Specifically, we first group the elements of $\mue(N_i)$ ($i=1,2$)
by their type: of the form $\{(x, \roottag)\}$, $\{(x, \treetag)\}$,
$\{(x, \hybtag)\}$, or $\{(x, \treetag), (y, \treetag)\}$.
Then it suffices to equip a total order and apply Algorithm~3 in \crv\
to each group, then add the distances obtained from the 4 groups.
For the first 3 types we can simply use the lexical order on the $\mu$-vector
$x$.
For the last type, we may compare two elements by comparing the
lexically smaller $\mu$-vector first and then the larger one, to obtain a
total order within the group.

As in \crv, with $\mue(N_i)$ computed and sorted, the above
takes $\mathcal{O}(n|E|)$ time where $|E| = \max(E_1, E_2)$.  Taking into account computing
and sorting $\mue(N_i)$, computing $\dmue$ takes $O\bigl(|E|(n + \log|E|)\bigr)$ time.

This complexity can also be expressed in terms of the number of leaves and root
components, thanks to the following straightforward generalization of
Proposition~1 in \crv, allowing for multiple root components.

\begin{prop}\label{prop:SDAG-bound}
  Let $N$ be a tree-child $\mathcal{L}$-network with $n$ leaves and $t$ root components.
  Then
  $|V_H| \leq n - t$.\\
  A node $v$ is called \emph{elementary}
  if it is a tree node and either
  $\outdegree(v)=\degree(v)=1$, 
  or $\outdegree(v)<\degree(v)=2$.
  If $N$ has no elementary nodes
  then
  \[
    |V| \leq 2n - t + \sum_{v \in V_H} \indegree(v) \leq (m+2)(n - t) + t
  \]
  where $\displaystyle m = \max_{v \in V_H}\{\indegree(v)\}$,
  and $|E| \leq (2m+1)(n-t)$.
\end{prop}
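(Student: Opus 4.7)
The plan is to pass to a rooted partner $G$ of $N$ and carry out a double-counting argument on the DAG $G$, extending the single-root analysis of \crv. Since $N$ is strongly tree-child, $G$ is a tree-child DAG, with $V(G)=V(N)$, $E(G)=E(N)$, $V_H(G)=V_H(N)$, and $V_L(G)=V_L(N)=\mathcal{L}$; by \Cref{prop:network-rooting}, $G$ has exactly $t$ roots. So it suffices to prove the bounds on $G$.

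For $|V_H|\le n-t$, I would construct a spine subgraph. By tree-childness, each non-leaf node $v$ has at least one tree child: pick one and call it $\phi(v)$, and let $S$ be the subgraph of $G$ with edge set $\{(v,\phi(v)):v \text{ non-leaf}\}$. In $S$, every node has outdegree $\le 1$ (by construction) and indegree $\le 1$; the latter because tree nodes have indegree $\le 1$ already in $G$, while every edge into a hybrid node is hybrid and hence never picked by $\phi$. Thus $S$ is a disjoint union of simple directed paths. Each path's sink has outdegree $0$ in $S$ and is therefore a leaf of $G$, giving $n$ paths and hence $n$ sources. Since all $|V_H|$ hybrids and all $t$ roots of $G$ are sources of $S$, we get $|V_H|+t\le n$.

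For the bounds on $|V|$ and $|E|$, I first verify that the no-elementary hypothesis on $N$ transfers to $G$: a case analysis on the signatures $(\indegree,\outdegree,\undegree)$ of a node in $N$ shows that the only ways to produce an elementary signature $(\indegree_G,\outdegree_G)\in\{(0,1),(1,1)\}$ in $G$ come either from an already $N$-elementary signature, or from the type $(0,0,1)$ which is excluded by the $V_{UL}=V_{RL}$ condition through \Cref{lem:ULminusRL} (combined with \Cref{prop:M-forest} and \Cref{prop:fixeddirection-in-DP}, which force such a node in $V_{DP}(N)$ to become a leaf in $G$). Then I double-count edges in $G$: writing $V_T^*=V_T\setminus V_L$, every tree node has indegree $\le 1$ with equality except at the $t$ roots, giving
\[
|E|=\sum_v\indegree(v)=(n+|V_T^*|-t)+\sum_{v\in V_H}\indegree(v);
\]
meanwhile, tree-childness plus no-elementary forces outdegree $\ge 2$ at every internal tree node and $\ge 1$ at every hybrid, yielding $|E|\ge 2|V_T^*|+|V_H|$. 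Combining these gives $|V_T^*|\le (n-t)+\sum_h\indegree(h)-|V_H|$, hence $|V|=n+|V_T^*|+|V_H|\le 2n-t+\sum_h\indegree(h)$, and bounding $\sum_h\indegree(h)\le m|V_H|\le m(n-t)$ recovers the middle inequality. Substituting back into the expression for $|E|$ and using $|V_H|\le n-t$ produces $|E|\le (2m+1)(n-t)$.

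The main obstacle is the no-elementary transfer from $N$ to $G$: undirected edges of $N$ acquire orientations in $G$, so a priori a node whose signature looks fine in $N$ could become elementary in $G$. The case analysis sketched above is the delicate part, and it relies essentially on the $\mathcal{L}$-network assumption to exclude the degenerate $(0,0,1)$-type leaves. Once this transfer is in hand, the spine argument and the two degree-sum identities are routine.
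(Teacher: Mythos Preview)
Your proposal is correct and follows essentially the same strategy as the paper: pass to a rooted partner $G$, observe that the no-elementary hypothesis transfers to $G$ (the paper states this in one line, you spell out the case analysis), and then carry out the counting argument on the tree-child DAG $G$ with $t$ roots. Your spine construction for $|V_H|+t\le n$ and your double degree-sum for $|V|$ and $|E|$ are exactly the arguments of Proposition~1 in \crv\ adapted to a forest with $t$ components, which is precisely what the paper invokes; the paper's derivation of the $|E|$ bound via $|E|\le(|V_T|-t)+m|V_H|$ is just a minor rearrangement of your computation.
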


\begin{proof}
  Since $N$ is an $\mathcal{L}$-network, it has no ambiguous leaves,
  and the elementary nodes in $N$
  are the tree nodes of out-degree 1 in any rooted partner.
  By considering a rooted partner, we may assume that $N$ is a DAG with $t$ roots,
  and follow the proof of Proposition~1 in \crv.
  Their arguments remain valid for the bounds on $|V_H|$ and $|V|$ when
  $N$ has $t\geq 1$ roots,
  and when the removal of all but 1 parent hybrid edges at each hybrid node
  gives a forest instead of a tree.
  To bound $|E|$ we enumerate the parent edges of each node:
  $|E| \leq (|V_T| - t) + m |V_H| = |V| - t + (m-1)|V_H|$
  then use the previous bounds.
\end{proof}

Therefore, as long as $m$ and $t$ are bounded
and there are no elementary nodes, for example in binary
tree-child networks with a single root component, then $|E| = \mathcal{O}(n)$.
Consequently, computing $\mue$ on one such network or computing $\dmue$ on
two such networks takes $\mathcal{O}(n^2)$ time.  

\section{Conclusion and extensions}

For rooted networks, the node-based representation $\muv$, or equivalently
the ancestral profile, is known to provide a distance between
networks beyond the class of tree-child networks,
such as the class of semibinary tree-sibling time-consistent networks
\citep{2008Cardona-mudist-treesibling}
and stack-free orchard binary networks \citep{2021Bai-ancestralprofile},
a class that includes binary tree-child networks.
Orchard networks can be characterized as rooted trees with
additional ``horizontal arcs'' \citep{2022vanIerselJanssenJonesMurakami}.
They were first defined as cherry-picking networks: networks
that can be reduced to a single edge
by iteratively reducing a cherry or a reticulated cherry
\citep{2021JanssenMurakami}.
A \emph{cherry} is a pair of leaves $(x,y)$ with a common parent.
A \emph{reticulated cherry} is a pair of leaves $(x,y)$ such that
the parent $u$ of $y$ is a tree node and
the parent $v$ of $x$ is a hybrid node with $e=(u,v)$ as a parent hybrid edge.
Reducing the pair $C=(x,y)$ means removing taxon $x$ if $C$ is a cherry or
removing hybrid edge $e$ if $C$ is a reticulated cherry,
and subsequently suppressing $u$ and $v$ if they are of degree 2.
Cherries and reticulated cherries are both well-defined
on the class of semidirected networks considered here,
because leaves are well-defined (stable across rooted partners),
each leaf is incident to a single tree edge,
and hybrid nodes / edges are well-defined.
A \emph{stack} is a pair of hybrid nodes connected by a hybrid edge,
and a rooted network is \emph{stack-free} if it has no stack.
As hybrid edges are well-defined on our general class of networks, the concepts
of stacks and stack-free networks also generalize directly.
Therefore, we conjecture that for semidirected networks, our edge-based
representation $\mue$ and the associated dissimilarity $\dmue$ also separate
distinct networks well beyond the tree-child class, possibly to
stack-free orchard semidirected networks.

\medskip
To discriminate distinct orchard networks with possible stacks,
\citet{2024Cardona-extendedmu} introduced an ``extended'' node-based
$\mu$-representation of rooted phylogenetic networks.
In this representation, the $\mu$-vector for each node $v$ is extended by
one more coordinate, $\mu_0(v)$, counting the number of paths from $v$
to a hybrid node (any hybrid node).
On rooted networks, adding this extension allows $\muv$ to distinguish between
any two orchard networks, even if they contain stacks
(but assumed binary, without parallel edges and without outdegree-1 tree nodes
in \cite{2024Cardona-extendedmu}).
For semidirected networks, we conjecture that the edge-based representation
$\mue$ can also be extended in the same way, and that this extension may
provide a proper distance on the space of semidirected orchard networks.

\medskip

Phylogenetic networks are most often used as metric networks
with edge lengths and inheritance probabilities.
Dissimilarities are needed to compare metric networks using both
their topologies and edge parameters.
For trees, extensions of the RF distance, which $\dmue$ extends,
are widely used. They can be expressed using edge-based $\mu$-vectors as
\begin{equation}\label{eq:weightedRF}
  d(T_1,T_2) = \sum_{m\in\mu_E(T_1)\cup \mu_E(T_2)} | \ell(m,T_1) - \ell(m,T_2) |^p
\end{equation}
where $\ell(m,T_i)$ is the length in tree $T_i$ of the edge corresponding
to the $\mu$-vector $m$,
considered to be $0$ if $m$ is absent from $\mu_E(T_i)$.
The weighted RF distance uses $p=1$ \citep{1979RobinsonFoulds}
and the branch score distance uses $p=2$
\citep{1994KuhnerFelsenstein-branchscoredistance}.
If all weights $\ell(m,T)$ are 1 for $m\in \mu_E(T)$,
then \eqref{eq:weightedRF} boils down to the RF distance
when restricted to trees (either rooted or unrooted),
and to our $\dmue$ dissimilarity
on semidirected phylogenetic networks more generally.
For networks with edge lengths, \eqref{eq:weightedRF}
could be used to extend $\dmue$, where $\ell(\mue(e),N)$ is defined
as the length of edge $e$ in $N$ as it is for trees.
A root $\mu$-vector could be assigned weight
$\ell(\mu_r(T),N)=0$, because in standard cases,
such as for networks with a single root component, the root $\mu$-vector(s)
carry redundant information.

Alternatively, using inheritance probabilities
could be
useful to capture the similarity between a network having a hybrid
edge with inheritance very close to $0$ and a network lacking this edge.
To this end, we could modify
$\mu$-vectors.
Recall that \crv\ defined $\mu(v,N)=(\mu_1,\ldots,\mu_n)$ with $\mu_i$ equal to
the number $m_i$ of paths from $v$ to taxon $i$ in a directed network $N$.
We could generalize $\mu_i$ to be a function of these $m_i$ paths,
possibly reflecting inheritance probabilities.
For example, we could use
the weight of a path $p$, defined as $\gamma(p) = \prod_{e\in p} \gamma(e)$.
These weights sum to 1
over up-down paths between $v$ and $i$ \citep{2023XuAne},
although not over the $m_i$ directed paths from $v$ to $i$.
The weights of the $m_i$ paths could then be normalized before calculating
their entropy $H_i = - \sum_{p: v\leadsto i} \gamma(p) \log\gamma(p)$
and then define $\mu_i = e^{H_i}$.
The original definition $\mu_i = m_i$ corresponds
to giving all paths $v\leadsto i$ equal weight $1/m_i$.
This extension carries over from directed to
semidirected networks because we proved here that the set of directed paths
from an edge $e=(u,v)$ to $i$ is independent of the root choice,
given a fixed admissible direction assigned to $e$, as shown in
Propositions~\ref{prop:directed-part-mu-vec} and~\ref{prop:directional-mu-vec}.
With this extension, $\mu$-vectors are in the continuous space
$\mathbb{R}_{\geq 0}^n$ instead of $\mathbb{Z}_{\geq 0}^n$.
To use them in a dissimilarity between networks $N$ and $N'$, we could use
non-trivial distance between $\mu$-vectors (such as the $L^1$ or $L^2$ norm)
then get the score of an optimal matching between $\mu$-vectors in $\mu_E(N)$ and
$\mu_E(N')$. Searching for an optimal matching would increase the computational
complexity of the dissimilarity, but would remain polynomial
using the Hungarian algorithm \citep{1955Kuhn-hungarian}.

\medskip

To reduce the dependence of $\dmue$
on the number of taxa $n$ in the two networks,
$\dmue$ should be normalized by a factor depending on $n$ only.
This is particularly useful to compare networks with different leaf labels,
by taking the dissimilarity between the subnetworks on their shared leaves.
Ideally, the normalization factor is the diameter of the network space, that is,
the maximum distance $\dmue(N,N')$ over all networks $N$ and $N'$ in a subspace
of interest.
For the subspace of unrooted trees on $n$ leaves, this is $2(n-3)$
\citep{steel16_phylogeny}.
Future work could study the diameter of other semidirected network spaces,
such as level-1 or tree-child semidirected networks
(which have $n-t$ or fewer hybrid nodes, \Cref{prop:SDAG-bound})
or orchard semidirected networks (whose number of hybrids is unbounded).

To compare semidirected networks $N_1$ on leaf set $\mathcal{L}_1$
and $N_2$ on leaf set $\mathcal{L}_2$ with a non-zero dissimilarity if
$\mathcal{L}_1 \neq \mathcal{L}_2$, one idea is to consider the subnetworks
$\tilde{N}_1$ and $\tilde{N}_2$ on their common leaf set
$\mathcal{L} = \mathcal{L}_1 \cap \mathcal{L}_2$ then
use a penalized dissimilarity:
$$\dmue(\tilde{N}_1,\tilde{N}_2) + \lambda d_\mathrm{Symm}(\mathcal{L}_1,\mathcal{L}_2)$$
for some constant $\lambda \geq 0$.
This dissimilarity may not satisfy the triangle inequality,
which might be acceptable in some contexts.
For example, consider as input a set of semidirected networks $N_1,\ldots, N_n$
with $N_i$ on leaf set $\mathcal{L}_i$, and consider
the full leaf set $\mathcal{L} =\cup_i^n \mathcal{L}_i$.
We may then seek an $\mathcal{L}$-network $N$
that minimizes some criterion, such as
\begin{equation}\label{eq:supertree}
\sum_{i=1}^n d(N,N_i)\;.
\end{equation}

When $N$ is constrained to be an unrooted tree,
input networks $N_i$ are unrooted trees and when
$d$ is the RF distance using $N$ pruned to $\mathcal{L}_i$,
this is the well-studied RF supertree problem
\citep{2017VachaspatiWarnow-RFsupertree}.
When the input trees $N_i$ are further restricted to be on 4 taxa,
\eqref{eq:supertree} is the criterion used by ASTRAL \citep{2018Zhang-ASTRAL3}.
The very wide use of ASTRAL and its high accuracy points to the impact of
distances that are fast to calculate, such as our proposed $\dmue$.

\section*{Acknowledgements}
\noindent
This work was supported in part by the National Science Foundation
(DMS 2023239) and by a H. I. Romnes faculty fellowship
to C.A. provided by the University of Wisconsin-Madison Office of the
Vice Chancellor for Research with funding from the
Wisconsin Alumni Research Foundation.

\bibliography{lib}

\com{
\vspace{-33pt}
\begin{IEEEbiographynophoto}{Michael Maxfield}
received his Bachelor of Science degrees in Computer Science and Mathematics from the
University of Wisconsin - Madison in 2024, and is currently working towards his
Masters degree in Mathematics. His interests are in various areas of mathematics and logic.
\end{IEEEbiographynophoto}

\vspace{-33pt}
\begin{IEEEbiographynophoto}{Jingcheng Xu}
  received his Ph.D. degree in Statistics from the University of
  Wisconsin-Madison in 2024, focusing on distance-based methods for phylogenetic
  networks.  He currently works in a research role in the finance sector.
\end{IEEEbiographynophoto}

\vspace{-33pt}
\begin{IEEEbiographynophoto}{C{\'e}cile An{\'e}}
is currently Professor at the University of Wisconsin - Madison.
Her research interests are in the development of statistical and computational
methods for the study of molecular and trait evolution.
\end{IEEEbiographynophoto}
}

\newpage

\appendix[Example edge-based $\mu$-representation]
\renewcommand{\thefigure}{A\arabic{figure}}

\label{sec:appendix-muexample}

Let $\mathcal{L}=(a_1,a_2,b,c,d,h_1,h_2,h_3)$.
In Fig.~\ref{fig:compare2nets} left, $N$ is an $\mathcal{L}$-network.
In $\mu$-vectors, leaves are ordered as in $\mathcal{L}$.
Following notations in \Cref{alg:edge-mu-rep},
$\mue(N) = A_1 + A_2 + A_3 + A_4$ where the multisets $A_i$ ($i\leq 4$)
partition the $\mu$-vector sets
based on their type (uni/bidirectional) and tags
($\treetag$, $\hybtag$ or $\roottag$).

\noindent
$A_1$ contains the $\mu$-vector sets of tree edges in the directed part
(in black in Fig.~\ref{fig:compare2nets}),
which are here the pendent edges (incident to leaves) and $e_5$:
\begin{IEEEeqnarray*}{rcllc}
A_1 &=& \lbag
  & \{((1{,}0{,}0{,}0{,}0, 0{,}0{,}0), \treetag)\}, & \;\;\mbox{\texttt{\# pendent to }}a_1 \\
&&& \{((0{,}1{,}0{,}0{,}0, 0{,}0{,}0), \treetag)\}, &\\
&&& \{((0{,}0{,}1{,}0{,}0, 0{,}0{,}0), \treetag)\}, &\\
&&& \{((0{,}0{,}0{,}1{,}0, 0{,}0{,}0), \treetag)\}, &\\
&&& \{((0{,}0{,}0{,}0{,}1, 0{,}0{,}0), \treetag)\}, &\\
&&& \{((0{,}0{,}0{,}0{,}0, 1{,}0{,}0), \treetag)\}, &\\
&&& \{((0{,}0{,}0{,}0{,}0, 0{,}1{,}0), \treetag)\}, &\\
&&& \{((0{,}0{,}0{,}0{,}0, 0{,}0{,}1), \treetag)\}, &\\
&&& \{((0{,}0{,}0{,}0{,}0, 1{,}1{,}0), \treetag)\} \rbag\,.&\mbox{\texttt{\# }}e_5
\end{IEEEeqnarray*}

\noindent
$A_2$ contains the $\mu$-vector sets of hybrid edges
(in blue in Fig.~\ref{fig:compare2nets}):
\begin{IEEEeqnarray*}{rcllc}
A_2 &=& \lbag
  & \{((0{,}0{,}0{,}0{,}0, 1{,}1{,}1), \hybtag)\}, & \\
&&& \{((0{,}0{,}0{,}0{,}0, 1{,}1{,}1), \hybtag)\}, &\\
&&& \{((0{,}0{,}0{,}0{,}0, 1{,}1{,}1), \hybtag)\} \rbag\,.&
\end{IEEEeqnarray*}

\noindent
$A_3$ contains a single $\mu$-vector set, because $N$ has only 1 root component:
\begin{IEEEeqnarray*}{rcllc}
A_3 &=& \lbag
  & \{((1{,}1{,}1{,}1{,}1, 3{,}3{,}3), \roottag)\} \rbag\,.&
\end{IEEEeqnarray*}

\noindent
$A_4$ contains the bidirectional $\mu$-vector sets of edges in the root component
(in brown in Fig.~\ref{fig:compare2nets}):
\begin{IEEEeqnarray*}{rcllc}
A_4 &=& \lbag
  & \{((1{,}1{,}0{,}0{,}0, 0{,}0{,}0), \treetag), ((0{,}0{,}1{,}1{,}1, 3{,}3{,}3), \treetag)\},
   &\;\;\mbox{\texttt{\# }}e_1 \\
&&& \{((0{,}0{,}1{,}0{,}0, 0{,}0{,}0), \treetag), ((1{,}1{,}0{,}1{,}1, 3{,}3{,}3), \treetag)\},
   &\;\;\mbox{\texttt{\# }}e_2 \\
&&& \{((1{,}1{,}1{,}1{,}0, 1{,}1{,}1), \treetag), ((0{,}0{,}0{,}0{,}1, 2{,}2{,}2), \treetag)\},
   &\;\;\mbox{\texttt{\# }}e_3 \\
&&& \{((1{,}1{,}1{,}1{,}0, 2{,}2{,}2), \treetag), ((0{,}0{,}0{,}0{,}1, 1{,}1{,}1), \treetag)\},
   &\;\;\mbox{\texttt{\# }}e_4 \\
&&& \{((1{,}1{,}0{,}0{,}0, 1{,}1{,}1), \treetag), ((0{,}0{,}1{,}1{,}1, 2{,}2{,}2), \treetag)\},
   &\;\;\mbox{\texttt{\# }}e_6 \\
&&& \{((1{,}1{,}1{,}0{,}0, 1{,}1{,}1), \treetag), ((0{,}0{,}0{,}1{,}1, 2{,}2{,}2), \treetag)\} \rbag\,.
   &\;\;\mbox{\texttt{\# }}e_7
\end{IEEEeqnarray*}

Now we reconstruct $N$ from $\mue(N)$, following notations from \Cref{alg:canon-dag} and \Cref{alg:network-from-rooted-partner}.

\noindent
$B_1$ contains the $\mu$-vectors from those in $A_1$, corresponding to the tree
nodes in the directed part:
\begin{IEEEeqnarray*}{rcll}
B_1 &=& \lbag &
(1{,}0{,}0{,}0{,}0, 0{,}0{,}0), (0{,}0{,}0{,}1{,}0, 0{,}0{,}0), (0{,}0{,}0{,}0{,}0, 0{,}1{,}0), \\ &&&
(0{,}1{,}0{,}0{,}0, 0{,}0{,}0), (0{,}0{,}0{,}0{,}1, 0{,}0{,}0), (0{,}0{,}0{,}0{,}0, 0{,}0{,}1), \\ &&&
(0{,}0{,}1{,}0{,}0, 0{,}0{,}0), (0{,}0{,}0{,}0{,}0, 1{,}0{,}0), (0{,}0{,}0{,}0{,}0, 1{,}1{,}0) \rbag\,.
\end{IEEEeqnarray*}

\noindent
$B_2$ contains the hybrid node's $\mu$-vector (in the directed part):
\begin{IEEEeqnarray*}{rcllc}
B_2 &=& \{
  & (0{,}0{,}0{,}0{,}0, 1{,}1{,}1) \}\,.&
\end{IEEEeqnarray*}

\noindent
$B_3$ contains a single root $\mu$-vector:
\[ B_3=\{z\} \mbox{ with } z = (1{,}1{,}1{,}1{,}1, 3{,}3{,}3)\;. \]

\noindent
$M(z)$ contains all directional $\mu$-vectors of all edges in the root component
corresponding to $z$:
\begin{IEEEeqnarray*}{rcll}
M(z) &=& \lbag
  & (1{,}1{,}0{,}0{,}0, 0{,}0{,}0), (0{,}0{,}1{,}0{,}0, 0{,}0{,}0), (1{,}1{,}1{,}1{,}0, 1{,}1{,}1),\\
&&& (0{,}0{,}1{,}1{,}1, 3{,}3{,}3), (1{,}1{,}0{,}1{,}1, 3{,}3{,}3), (0{,}0{,}0{,}0{,}1, 2{,}2{,}2),\\[4pt]
&&& (1{,}1{,}1{,}1{,}0, 2{,}2{,}2), (1{,}1{,}0{,}0{,}0, 1{,}1{,}1), (1{,}1{,}1{,}0{,}0, 1{,}1{,}1),\\
&&& (0{,}0{,}0{,}0{,}1, 1{,}1{,}1), (0{,}0{,}1{,}1{,}1, 2{,}2{,}2), (0{,}0{,}0{,}1{,}1, 2{,}2{,}2) \rbag\,.
\end{IEEEeqnarray*}

\noindent
After arbitrarily choosing $r(z)=(0{,}0{,}0{,}1{,}1, 2{,}2{,}2)$,
$B_4$ contains a subset of directional $\mu$-vectors:
\begin{IEEEeqnarray*}{rcll}
B_4 &=& \lbag
  & (1{,}1{,}0{,}0{,}0, 0{,}0{,}0), (0{,}0{,}1{,}0{,}0, 0{,}0{,}0), (0{,}0{,}0{,}0{,}1, 2{,}2{,}2),\\
&&& (0{,}0{,}0{,}0{,}1, 1{,}1{,}1), (1{,}1{,}0{,}0{,}0, 1{,}1{,}1), (0{,}0{,}0{,}1{,}1, 2{,}2{,}2) \rbag\,.
\end{IEEEeqnarray*}

\begin{figure}
\centering \includegraphics{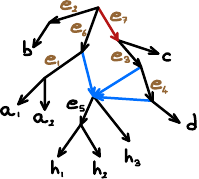}
\caption{For $N$ in Fig.~\ref{fig:compare2nets} (left),
  rooted partner $G$ from \Cref{alg:edge-mu-rep} when choosing
  $r(z)=(0{,}0{,}0{,}1{,}1, 2{,}2{,}2)$,
  which corresponds to $e_7$ (red arrow).
  The undirected edges in $N$ (brown labels) are identified by
  \Cref{alg:network-from-rooted-partner}.
}\label{fig:compare2nets-G}
\end{figure}

\noindent
Fig.~\ref{fig:compare2nets-G} shows $G$, the rooted partner
such that $\muv(G) = B_1+B_2+B_3+B_4$.
\Cref{alg:network-from-rooted-partner} starts by obtaining
$B=\mue(G)$:
\begin{IEEEeqnarray*}{rcll}
B &=& \lbag
  & \{((1{,}0{,}0{,}0{,}0, 0{,}0{,}0), \treetag)\}, \{((0{,}0{,}1{,}0{,}0, 0{,}0{,}0), \treetag)\},\\
&&& \{((0{,}1{,}0{,}0{,}0, 0{,}0{,}0), \treetag)\}, \{((0{,}0{,}0{,}1{,}0, 0{,}0{,}0), \treetag)\},\\
&&& \{((0{,}0{,}0{,}0{,}1, 0{,}0{,}0), \treetag)\}, \{((0{,}0{,}0{,}0{,}0, 0{,}1{,}0), \treetag)\},\\
&&& \{((0{,}0{,}0{,}0{,}0, 1{,}0{,}0), \treetag)\}, \{((0{,}0{,}0{,}0{,}0, 0{,}0{,}1), \treetag)\},\\
&&& \{((0{,}0{,}0{,}0{,}0, 1{,}1{,}0), \treetag)\},\\
&&& \{((0{,}0{,}0{,}0{,}0, 1{,}1{,}1), \hybtag)\}, \{((0{,}0{,}0{,}0{,}0, 1{,}1{,}1), \hybtag)\},\\
&&& \{((0{,}0{,}0{,}0{,}0, 1{,}1{,}1), \hybtag)\}, \\
&&& \{((1{,}1{,}1{,}1{,}1, 3{,}3{,}3), \roottag)\}, \\
&&& \{((1{,}1{,}0{,}0{,}0, 0{,}0{,}0), \treetag)\}, \{((0{,}0{,}1{,}0{,}0, 0{,}0{,}0), \treetag)\},\\
&&& \{((0{,}0{,}0{,}0{,}1, 2{,}2{,}2), \treetag)\}, \{((0{,}0{,}0{,}0{,}1, 1{,}1{,}1), \treetag)\},\\
&&& \{((1{,}1{,}0{,}0{,}0, 1{,}1{,}1), \treetag)\}, \{((0{,}0{,}0{,}1{,}1, 2{,}2{,}2), \treetag)\} \rbag\,.
\end{IEEEeqnarray*}

\noindent
Then $F$ contains the $\mu$-vectors $x$ of elements $\{(x,\treetag)\}$
that are in $B$ but not in $\mue(N)$:
\begin{IEEEeqnarray*}{rcll}
F &=& \lbag
  & (1{,}1{,}0{,}0{,}0, 0{,}0{,}0), (0{,}0{,}1{,}0{,}0, 0{,}0{,}0), \\
&&& (0{,}0{,}0{,}0{,}1, 2{,}2{,}2), (0{,}0{,}0{,}0{,}1, 1{,}1{,}1),\\
&&& (1{,}1{,}0{,}0{,}0, 1{,}1{,}1), (0{,}0{,}0{,}1{,}1, 2{,}2{,}2) \rbag\,.
\end{IEEEeqnarray*}
For $x = (0{,}0{,}1{,}0{,}0, 0{,}0{,}0)$, $\{(x, \treetag)\}$ has multiplicity 2
in $B$, corresponding to an elementary path of 2 edges:
$e_2$ and the pendent edge incident to $b$.
As $x$ has multiplicity 1 in $F$, only $e_2$ (in $G$) is undirected to obtain $N$.
The other elements in $F$ also have multiplicity 1.
Each one corresponds to a single edge in $G$
(labeled in brown in Fig.~\ref{fig:compare2nets-G})
and is undirected to obtain $N$.

\end{document}